\theoremstyle{plain} 
\newtheorem{thm}{Theorem}[subsection] 
\newtheorem{cor}[thm]{Corollary} 
\newtheorem{lem}[thm]{Lemma} 
\newtheorem{prop}[thm]{Proposition} 
\theoremstyle{definition} 
\newtheorem{defn}[thm]{Definition} 
\newtheorem{pr}[thm]{Problem} 
\newtheorem*{notation}{Notation} 
\newtheorem{rem}[thm]{Remark} 
\newtheorem*{pfoflem}{Proof of the lemma} 
\theoremstyle{remark} 
\DeclareMathOperator{\Spec}{Spec}
\DeclareMathOperator{\Ker}{Ker}
\DeclareMathOperator{\Coker}{Coker}
\numberwithin{equation}{subsection}
\newcommand{\N}{\mathbb{N}} 
\newcommand{\Z}{\mathbb{Z}}     
\newcommand{\C}{\mathbb{C}} 
\newcommand{\A}{\mathbb{A}}     
\newcommand{\G}{\mathbb{G}} 
\newcommand{\Proj}{\mathbb{P}}    
\newcommand{\DD}{\mathbb{D}}    
\newcommand{\E}{\textbf{E}} 
\newcommand{\F}{\textbf{F}}  
\newcommand{\BL}{\textbf{L}} 
\newcommand{\Bev}{\textbf{ev}} 
\newcommand{\Br}{\textbf{r}}
\newcommand{\Bc}{\textbf{c}} 
\newcommand{\sA}{\mathcal{A}}
\newcommand{\sB}{\mathcal{B}}
\newcommand{\sC}{\mathcal{C}}
\newcommand{\sD}{\mathcal{D}}
\newcommand{\sG}{\mathcal{G}}
\newcommand{\sK}{\mathcal{K}}
\newcommand{\sL}{\mathcal{L}} 
\newcommand{\sM}{\mathcal{M}}
\newcommand{\sN}{\mathcal{N}}
\newcommand{\sO}{\mathcal{O}}
\newcommand{\sT}{\mathcal{T}}
\newcommand{\sW}{\mathcal{W}} 
\newcommand{\sX}{\mathcal{X}}
\newcommand{\sU}{\mathcal{U}}
\newcommand{\fM}{\mathfrak{M}} 
\newcommand{\fQ}{\mathfrak{Q}}
\newcommand{\fT}{\mathfrak{T}}
\newcommand{\CharM}{\overline{\mathcal{M}}}
\begin{document}
\title{The degeneration formula for logarithmic expanded degenerations}


\author{Qile Chen}
\address{Qile Chen, Department of Mathematics, Box 1917, Brown University,
Providence, RI, 02912, U.S.A} \email{q.chen@math.brown.com}

\maketitle
\setcounter{tocdepth}{1}
\tableofcontents
%
\section{Introduction}

Throughout this paper, we work over $\C$, the field of complex numbers.

\subsection{Degeneration formula using expanded degenerations}
Gromov-Witten theory has been established and intensively studied in the past decades for compact symplectic manifolds under the symplectic setting, and for smooth projective complex varieties under the algebraic setting. We refer to \cite{C-K} for an extensive bibliography. This theory was later extended to the case of smooth Deligne-Mumford stacks, see \cite{CR,AGV}.

An important remaining problem is how to calculate Gromov-Witten invariants in general. The method we are interested in here is by means of degenerations. Consider $\pi: W \to B$, a flat, projective family of schemes over a smooth, connected, and possibly non-proper curve $B$. Let $0\in B$ be a closed point such that $\pi$ is smooth away from $W_{0}=W \times_{B}0$, and the central fiber $W_{0}$ is reducible with two smooth components $X_{1}$ and $X_{2}$ intersecting transversally along a smooth divisor $D\subset W_{0}$. We view $D$ as a smooth divisor in $X_{i}$, and write $D_{i}\subset X_{i}$. Then we have two smooth pairs $(X_{1},D_{1})$ and $(X_{2},D_{2})$. It is well-known that the smooth fibers all have the same Gromov-Witten invariants. It is natural to ask the following questions:

\begin{pr}\label{pr:central}
Can we define Gromov-Witten theories for the singular fiber $W_{0}$ and the pairs $(X_{i},D_{i})$ such that 
\begin{enumerate}
 \item The Gromov-Witten invariants of $W_{0}$ are the same as those of the smooth fibers;
 \item There exists a degeneration formula that relates the Gromov-Witten invariants of $W_{0}$, and hence of the smooth fibers, to the relative Gromov-Witten invariants of $(X_{i},D_{i})$?
\end{enumerate}
\end{pr}

In the situation described above, these problems were answered under symplectic setting by A.M. Li and Y. Ruan \cite{LR}, and about the same time by E.N. Ionel and T. Parker \cite{IP1,IP2}. On the algebraic side, these were worked out by Jun Li \cite{Jun1,Jun2}. Their approach uses the method of expanded degenerations --- a surgery on the target which forces the stable maps to be non-degenerate, namely no components of the curves mapping into the singular locus of $W_{0}$ or the divisor $D_{i}$. 

The objects studied in Jun Li's setting are called the predeformable maps. One difficulty of this theory is that the predeformability is in general not an open condition. The usual deformation theory of stable maps, hence the usual construction of perfect obstruction theory, does not work for predeformable maps. Jun Li's study of the deformation theory of predeformable maps was inspired by log structures. However he did not use log structures explicitly, as at the time he developed the theory of predeformable maps, the theory of logarithmic cotangent complex \cite{LogCot} has not been developed yet. 

Another approach that combines the method of expanded degenerations and the orbifold techniques was recently introduced by Dan Abramovich and Barbara Fantechi \cite{AF}. By taking the suitable root stacks along singular locus, they constructed transversal maps over each predeformable map. Since the transversality is open, the construction of perfect obstruction theory for transversal maps is more transparent. A degeneration formula was proved in \cite{AF} by systematically using the orbifold techniques.

Based on Jun Li's construction, B. Kim introduced his notion of logarithmic stable maps \cite{Kim} by putting certain log structures along the nodes of the source curves, and the singular locus of the targets. This can be viewed as a generalization of the idea of admissible covers revisited by Mochizuki \cite{Mochizuki} using log structures. Then using logarithmic cotangent complex in the sense of \cite{LogCot}, B. Kim construct a perfect obstruction theory for the stack of the log stable maps. However, he did not give a degeneration formula under this setting.

\subsection{The goal and outline of this paper}
This paper is aimed at obtaining the degeneration formula in Theorem \ref{thm:main} by applying the method developed in \cite{AF} to Kim's log stable maps. However for the purpose of the degeneration formula, the log structures for log stable maps in this paper are slightly different from the one in \cite{Kim}: besides Kim's log structure along singular locus, the log maps in this paper were also equipped with the standard log structures coming from both marked points of the source curves and the smooth divisors of the targets. The stack parameterizing the log stable maps in this paper will be constructed by following the same proof in \cite{Kim}. We refer to Section \ref{sec:source} for the log structures we used on curves, and Section \ref{sec:general-target} for the definition of log stable maps, and the construction of the stacks. 

The idea of constructing virtual fundamental class with log cotangent complex in the sense of \cite{LogCot} was first introduced in \cite{Kim}. In this paper, we will adopt this idea. However, the formation of our virtual fundamental class will be similar to the one in \cite{AF}. Section \ref{sec:fundamental-class} is devoted to construct the virtual fundamental class, and study its behavior under the base change, which will be important in the proof of the degeneration formula.

In Section \ref{sec:target}, we will introduce the targets and their moduli spaces that are related to Problem \ref{pr:central}. It will be proved that those stacks are similar or even identical to the ones in \cite{AF}. Then we collect some splitting results in \cite{AF} for those stacks. The Gromov-Witten invariants for the targets introduced in Section \ref{sec:target} will be defined in Section \ref{sec:GW}.

Section \ref{sec:degeneration-formula} is devoted to prove the degeneration formula using the method in \cite{AF}. The main difficulty here is to study the gluing of the log maps. Unlike the situation in \cite{AF}, the gluing of log maps can not be expressed as a push-out diagram. However, the possible log structures on the glued underlying maps can be expressed as tuples of isomorphisms of certain line bundles. This still gives a way for us to compare the virtual fundamental classes of the stacks of relative log stable maps and the stack of log stable maps with fixed splittings.

Finally we give an appendix, which collect some results of log geometry that we will use in this paper. 

We would like to point out that the degeneration formula constructed in this paper has the same formation as in \cite{AF}. Indeed, we further expect the theory of transversal maps in \cite{AF} is equivalent to the theory of log stable maps in the sense of this paper, and are planning to explore this in the subsequent paper. 

\subsection{Other approaches}
As early as in 2001, another approach using logarithmic structures without expansions was first proposed by Bernd Siebert \cite{Siebert}. The goal here
is also to obtain the degeneration formula, but in a much more general situation, such as normal crossing divisors. However, the program has been on hold for a while, since Mark Gross and Bernd Siebert were working on other projects in mirror symmetry. Only recently they have taken up the unfinished project of Siebert jointly. In particular, they succeeded to find a definition of basic log maps, a crucial ingredient for a good moduli theory of stable log maps with a fixed target \cite{GS2}. Their definition builds on insights from tropical geometry, obtained by probing the stack of
log maps using the standard log point. This theory is expected to cover the case of targets with arbitrary fine and saturated log structures, or even with relatively coherent log structures. In particular, this includes the class of deformations Gross and Siebert need in their mirror symmetry program \cite[2.2]{GS1}.

Along this approach, another theory of minimal log stable maps was established recently by Dan Abramovich and the author \cite{Chen, AC}, which gives a compactifications of moduli spaces of stable maps relative to certain toric divisors. This covers many cases of interesting, such as a variety with a simple normal crossings divisor, or a simple normal crossings degeneration of a variety with simple normal crossings singularities. Using the theory of minimal log stable maps, a further program for the degeneration formulas in more general situations is on its way.

A different approach using exploded manifolds to studying holomorphic curves was recently introduced by Brett Parker in \cite{Parker1}, \cite{Parker3}, and \cite{Parker2}. It also aimed at defining and computing relative and degenerated Gromov-Witten theories in general situation. It was pointed out by Mark Gross that this approach is parallel, and possibly equivalent to the logarithmic approach.

\subsection{Acknowledgements}
I would like to thank my advisor Dan Abramovich for suggesting the problem of this paper, giving me many enlightening suggestions and encouragement. I would also like to thank Mark Gross, and Bernd Siebert for their helpful conversations.

\section{Minimal Log Prestable curves}\label{sec:source}

\subsection{Log prestable curves}\label{ss:LogPresCurve}
Let $(C\to S, \{\Sigma_{i}\}_{i=1}^{n})$ be a usual genus $g$, $n$-marked prestable curve over $S$, where $\Sigma_{i}$ is the $i$-th marking for $i=0,\cdots,n$. 

First consider the family of curves $C\to S$ without markings. By \cite{FKato} and \cite{LogSS}, there is a pair of canonical log structure $\sM_{C}^{C/S}$ and $\sM_{S}^{C/S}$ on the fiber and base respectively, and a log smooth, integral morphism $\pi:(C,\sM_{C}^{C/S})\to (S,\sM_{S}^{C/S})$ whose underlying map coincides with $C\to S$. This canonical log structure has the following universal property that for any other log smooth, integral morphism $\pi:(C,\sM_{C})\to (S,\sM_{S})$, there is a unique (up to a unique isomorphism) pair of morphisms of fine log structures $\sM_{S}^{C/S}\to \sM_{S}$ and $\sM_{C}^{C/S}\to \sM_{C}$ on $S$ and $C$ respectively, which give the following catesian diagram:
\begin{equation}\label{diag:curve-univ}
\xymatrix{
(C,\sM_{C}) \ar[r] \ar[d] & (C,\sM_{C}^{C/S}) \ar[d]\\
(S,\sM_{S}) \ar[r] & (C,\sM_{S}^{C/S}).
}
\end{equation}

\begin{rem}\label{rem:DecompLogCurve}
Let $Sing\{C/S\}$ be the set of connected singular locus of $C$ over $S$, and assume that $S$ is a geometric point. For each $p\in Sing\{C/S\}$, there is a rank $1$, locally free log structure $\sN_{p}$ on $S$, which smooths the singular locus $p$. We have the decomposition:
\[\sM_{S}^{C/S}\cong \sum_{p\in Sing\{C/S\}}{\sN_{p}},\]
where the sum is taken over $\sO_{S}^{*}$.
\end{rem}

In order to do the degeneration formula, we would like to also put log structure on markings. Note that the $n$ markings $\{\Sigma_{i}\}$ corresponds to $n$ disjoint smooth divisors in $C$ over $S$. By \cite[1.5(1)]{KKato}, there is a canonical log structure associated to the smooth divisors described as follows. \'Etale locally we choose a generator $\sigma_{i}$ for the ideal sheaf $J_{i}$ that defining the divisor corresponding to $\Sigma_{i}$. There is canonical log structure $\sN_{i}$ which is the log structure associated to the pre-log structure $\N\to\sO_{C}$ sending $1$ to $\sigma_{i}$. Now we have another canonical log structure taking into account the markings:
\begin{equation}
\sM_{C}^{\sharp} = \sM^{C/S}_{C}\oplus_{\sO_{C}}\sN_{1}\oplus_{\sO_{C}}\sN_{2}\oplus_{\sO_{C}}\cdots\oplus_{\sO_{C}}\sN_{n}.
\end{equation}   
We also have a log smooth map $\pi^{\sharp}:(C,\sM_{C}^{\sharp})\to(S,\sM_{S}^{C/S})$, induced by the canonical log structure without markings. Consider any log morphism $\pi:(C,\sM_{C})\to(S,\sM_{S})$ such that
\begin{enumerate}
 \item The underlying map $C\to S$ is family of prestable curves with $n$ markings $\{\Sigma_{i}\}$;
 \item There is another log structure $\sM_{C}'$ on $C$, such that \[\sM_{C}=\sM_{C}'\oplus\sum_{i}{\sN_{i}};\]
 \item The log morphism $\pi$ is induced by a log smooth, integral map $(C,\sM_{C}')\to(S,\sM_{S})$.
\end{enumerate}
The universal property as in (\ref{diag:curve-univ}) implies that there are a unique pair of morphisms of log structures $\sM_{C}^{\sharp}\to \sM_{C}$ and $\sM_{S}^{C/S}\to\sM_{S}$ fitting in to the following catesian diagram:
\begin{equation}\label{Diag:CurveCanLog}
\xymatrix{
(C,\sM_{C}) \ar[r] \ar[d] & (C,\sM_{C}^{\sharp}) \ar[d]\\
(S,\sM_{S}) \ar[r] & (C,\sM_{S}^{C/S}).
}
\end{equation}

\begin{defn}
A triple $(C\to S,\{\Sigma_{i}\}_{i=1}^{n},\sM^{C/S}_{S}\to\sM_{S})$ is called a {\em genus $g$, $n$-pointed log prestable curves over $S$}, if 
\begin{enumerate}
 \item $(C\to S,\{\Sigma_{i}\}_{i=1}^{n})$ is a usual genus $g$, $n$-pointed prestable curve over $S$;
 \item $\sM^{C/S}_{S}\to\sM_{S}$ is a morphism of fine log structures, where $\sM^{C/S}_{S}$ is the canonical log structure as in (\ref{Diag:CurveCanLog}).
\end{enumerate}
Denote by $\sM_{C}$ the log structure on $C$ given by (\ref{Diag:CurveCanLog}). In the rest of the paper, we will use $(C/S,\sM_{S})$ to denote the log prestable curve over $S$, when there is no confusion about the markings and the canonical log structure on the base. 

An arrow $(C_{1}\to S_{1}, \sM_{S_{1}})\to (C_{2}\to S_{2}, \sM_{S_{2}})$ between two log prestable curves is a cartesian diagram of log schemes:
\[
\xymatrix{
(C_{1}, \sM_{C_{1}}) \ar[r] \ar[d] & (C_{2}, \sM_{C_{2}}) \ar[d]\\
(S_{1}, \sM_{S_{1}}) \ar[r] & (S_{2}, \sM_{S_{2}}).
}
\]
where the bottom arrow is strict as in Section \ref{ss:DefLogStr}.
\end{defn}

\subsection{Minimality condition}
We introduce the minimality condition on log pre-stable curves as in \cite[3.7]{Kim}. 

\begin{defn}\label{def:MinCond}
A log prestable curve $(C\to S,\sM^{C/S}_{S}\to\sM_{S})$ over $S$ is called {\em minimal} if it satisfies the following two conditions:
\begin{enumerate}
 \item the log structure $\sM_{S}$ is free, and there is no proper free sub-log structure of $\sM_{S}$ containing the image of $\sM_{S}^{C/S}$;
 \item for any $s \in S$ and irreducible elements $b \in \overline{\sM}_{S,\overline{s}}$, there is an irreducible elements $a\in \overline{\sM}_{S,\overline{s}}^{C/S}$ such that $a \mapsto l\cdot b$ for some positive integer $l$.
\end{enumerate}
\end{defn}

\begin{rem}\label{rem:min-open}
It was proved in \cite[5.3.2]{Kim} that the minimality condition is an open condition.
\end{rem}

\subsection{Extended log structure on curves}
For the purpose of degeneration formula, we need to introduce the extended log structure on minimal log prestable curves.

\begin{defn}\label{def:ExtMinCurve}
A log prestable curves $(C\to S,\{\Sigma_{i}\}_{i=1}^{n},\sM^{C/S}_{S}\to\sM_{S})$ over $S$ is called {\em extended minimal log prestable}, if there is a sub-log structure $\sM_{S}'\to\sM_{S}$, such that
 \begin{enumerate}
  \item the arrow $\sM^{C/S}_{S}\to\sM_{S}$ factors through $\sM_{S}^{C/S}\to \sM_{S}'$;
  \item the log prestable curve $(C\to S,\sM_{S}')$ is minimal;
  \item locally we have a chart for $\sM^{C/S}_{S}\to\sM_{S}'\to\sM^{S}$ as follows:
\[
\xymatrix{
\sM^{C/S}_{S} \ar[rr] && \sM_{S}' \ar[rr] && \sM_{S}\\
\N^{m} \ar[rr] \ar[u] && \N^{n} \ar[u] \ar[rr]^{(id,0)} && \N^{n}\oplus\N^{n'} \ar[u].
}
\]
 \end{enumerate}
\end{defn}

\begin{prop}\label{prop:StackMinCurve}
The stack $\fM^{ext}_{g,n}$ parameterizing genus $g$, $n$-pointed extended minimal log prestable curves is an algebraic stack.
\end{prop}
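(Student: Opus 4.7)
The plan is to reduce the algebraicity of $\fM^{ext}_{g,n}$ to that of Kim's stack $\fM^{min}_{g,n}$ of genus $g$, $n$-pointed minimal log prestable curves, which is already proved in \cite{Kim}. First I would construct a natural forgetful $1$-morphism $\pi: \fM^{ext}_{g,n} \to \fM^{min}_{g,n}$ sending $(C/S, \sM_{S})$ to $(C/S, \sM_{S}')$, where $\sM_{S}'$ is the sub-log structure produced by Definition \ref{def:ExtMinCurve}. The sub-log structure $\sM_{S}' \hookrightarrow \sM_{S}$ is uniquely determined by $\sM_{S}$: indeed, $\sM_{S}'$ must be the smallest free sub-log structure of $\sM_{S}$ containing the image of $\sM_{S}^{C/S}$ and satisfying the divisibility condition of Definition \ref{def:MinCond}(2), and such a smallest sub-log structure is unique when it exists. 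Since morphisms in $\fM^{ext}_{g,n}$ are required to be strict on the base, such morphisms pull back this distinguished minimal sub-log structure to the distinguished minimal sub-log structure, so $\pi$ is functorial.

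Next I would show that $\pi$ is representable by an algebraic stack. Over a test scheme $T \to \fM^{min}_{g,n}$ representing $(C_{T}/T, \sM_{T}')$, the fiber product $T \times_{\fM^{min}_{g,n}} \fM^{ext}_{g,n}$ classifies strict inclusions $\sM_{T}' \hookrightarrow \sM_{T}$ subject to the local chart of Definition \ref{def:ExtMinCurve}(3). That chart forces $\sM_{T}$ to be étale locally isomorphic to the amalgamated sum $\sM_{T}' \oplus_{\sO_{T}^{*}} \sL$, where $\sL$ is a free log structure of some locally constant rank $n' \geq 0$, i.e. $\overline{\sL}$ is étale locally the constant sheaf $\underline{\N^{n'}}$ and the structure map $\N^{n'} \to \sO_{T}$ sends every generator to $0$. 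The classifying stack of such $\sL$'s is a disjoint union over $n' \geq 0$ of classifying stacks of the form $B(\G_{m}^{n'} \rtimes S_{n'})$, accounting for unit-rescaling and permutation of the generators. This is an open-and-closed substack of Olsson's algebraic stack of fine log structures, hence algebraic.

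Putting these together, $\pi$ is representable by an algebraic stack over the algebraic base $\fM^{min}_{g,n}$, so $\fM^{ext}_{g,n}$ is algebraic. I expect the principal obstacle to be the careful verification that the étale-local chart (3) globalizes: one must check both that the local identification $\sM_{T} \cong \sM_{T}' \oplus_{\sO_{T}^{*}} \sL$ descends functorially under the strict morphism condition imposed on $\fM^{ext}_{g,n}$, and that the distinguished sub-log structure $\sM_{S}'$ arising from the minimality property behaves well under base change, so that $\pi$ is indeed a $1$-morphism with algebraic fibers rather than merely a set-level map.
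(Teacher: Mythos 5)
Your approach differs from the paper's: the paper realizes $\fM^{ext}_{g,n}$ directly as an open substack of Olsson's $\sL og_{\fM_{g,n}}$, invoking the openness of the minimality condition from Remark \ref{rem:min-open}, whereas you factor through a forgetful $1$-morphism $\pi:\fM^{ext}_{g,n}\to\fM^{min}_{g,n}$ and attempt to identify the fibers of $\pi$. The detour is not wrong in principle --- the uniqueness of the distinguished sub-log structure $\sM_S'$ does hold, since it is generated by exactly those irreducible elements of $\overline{\sM}_S$ whose positive-integer multiples lie in the image of $\overline{\sM}_S^{C/S}$, though your phrasing ``smallest free sub-log structure satisfying the divisibility condition'' is not quite the right characterization. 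The paper's route is simply shorter, since $\sL og_{\fM_{g,n}}$ already parametrizes all log prestable curves and one just needs an openness statement.

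There is, however, a genuine error in your identification of the fiber with $B(\G_{m}^{n'}\rtimes S_{n'})$. A locally free log structure $\sL$ of rank $n'$ on $T$ corresponds to $n'$ line bundles with sections that are non-units at every point of $T$, but such sections need not vanish: over $T=\Spec k[\epsilon]/(\epsilon^{2})$ the log structure generated by $e$ with structure morphism $\alpha(e)=\epsilon$ is free of rank $1$ and is not isomorphic to the one with $\alpha(e)=0$, while $B\G_m(T)$ sees only the trivial object. Thus $B(\G_{m}^{n'}\rtimes S_{n'})$ is only the reduction of the actual fiber, which carries nontrivial infinitesimal structure --- the very deformations that make $\sL og$ the right place to do obstruction theory. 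Your fallback to Olsson's algebraicity of $\sL og$ is the right move, but the locus of locally free log structures of rank $n'$ is locally closed (upper semicontinuity of rank), not open-and-closed, and one has to use the chart condition (3) of Definition \ref{def:ExtMinCurve} to cut out the fiber by a locally closed condition. After these repairs your argument goes through, but at that point it recovers the paper's one-line appeal to Olsson's theorem with additional bookkeeping.
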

\begin{proof}
Consider the log stack $\sL og_{\fM_{g,n}}$ as in Section \ref{s:LogStack}. By the discussion in Section \ref{s:LogStack}, the stack $\sL og_{\fM_{g,n}}$ parameterizing all log smooth curves. By Remark \ref{rem:min-open}, the extended minimal log prestable curves forms an open substack of $\sL og_{\fM_{g,n}}$. This proves the statement. 
\end{proof}

\subsection{Curves with disjoint components}

Let $V$ be a finite set with two weight functions $g:V\to \N$ and $n:V\to \N$. 
\begin{defn}
Consider a family of curves $C\to S$, such that $C$ is the disjoint union of $C_{v}$ for all $v\in V$, where $C_{v}\to S$ is a usual family of prestable curves of genus $g(v)$ and $n(v)$ marked points. We call such $C\to S$ the {\em prestable curves with disjoint components of data $V$}.
\end{defn}

Note that there is a canonical log structure on $S$ given by 
\[\sM_{S}^{C/S}=\sum_{v\in V}\sM_{S}^{C_{v}/S}.\]
This induces a log structure on $C_{v}$ by (\ref{Diag:CurveCanLog}), hence a canonical log structure $\sM_{C}^{\sharp}$ on $C$. Similarly, we have the following definition:

\begin{defn}
Let $C\to S$ be a family of prestable curves with disjoint components of data $V$. A tuple $(C\to S,\sM^{C/S}_{S}\to\sM^{S})$ over $S$ is called a { \em minimal log prestable curve with disjoint components over $S$}, if the morphism of log structures $\sM^{C/S}_{S}\to\sM_{S}$ satisfies the two conditions in Definition \ref{def:MinCond}. It is called {\em extended minimal log prestable curves with disjoint components}, if $\sM^{C/S}_{S}\to\sM_{S}$ satisfies the three conditions in Definition \ref{def:ExtMinCurve}.
\end{defn}

Note that the stack parameterizing prestable curve of disjoint components of data $V$ is given by 
\[\fM_{V}=\prod_{v\in V}{\fM_{g(v),n(v)}}\]
where $\fM_{g(v),n(v)}$ is the algebraic stack of genus $g(v)$ prestable curves with $n(v)$-marked points. Note that $\fM_{V}$ has a canonical log structure, which comes from the canonical log structure of each $\fM_{g(v),n(v)}$. Thus, we can view $\fM_{V}$ as a log stack. 

\begin{prop}\label{prop:StackDisjointCurve}
The stack $\fM_{V}^{ext}$ parameterizing extended minimal log prestable curves with disjoint components of data $V$ is an open substack of $\sL og_{\fM_{V}}$, hence is an algebraic stack.
\end{prop}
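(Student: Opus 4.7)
The plan is to follow exactly the strategy used for Proposition \ref{prop:StackMinCurve}: realize $\fM_{V}^{ext}$ as an open substack of the log stack $\sL og_{\fM_{V}}$ introduced in Section \ref{s:LogStack}, and deduce algebraicity of the former from that of the latter.

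First, I would set up the ambient log stack. Since $\fM_{V}=\prod_{v\in V}\fM_{g(v),n(v)}$ is algebraic and each factor carries its canonical log structure, $\fM_{V}$ is naturally a log algebraic stack with log structure $\sM^{C/S}_{S}=\sum_{v\in V}\sM_{S}^{C_{v}/S}$. Applying the construction of Section \ref{s:LogStack} yields an algebraic log stack $\sL og_{\fM_{V}}$ whose objects over a scheme $S$ are precisely a family of prestable curves $C\to S$ with disjoint components of data $V$, together with an arbitrary morphism of fine log structures $\sM^{C/S}_{S}\to\sM_{S}$ on $S$. This matches the data specifying an object of $\fM_{V}^{ext}$ with the extended minimality condition forgotten; so as a stack $\fM_{V}^{ext}$ is a substack of $\sL og_{\fM_{V}}$.

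Next, I would verify that extended minimality of Definition \ref{def:ExtMinCurve} is an open condition on $\sL og_{\fM_{V}}$. The minimality of Definition \ref{def:MinCond} is open by Remark \ref{rem:min-open}. For the full extended condition one must in addition produce locally a sub-log structure $\sM'_{S}\subset\sM_{S}$ with $\sM_{S}$ splitting off a free factor $\N^{n'}$ over $\sM'_{S}$ in the prescribed chart form, with $\sM'_{S}$ itself minimal. Since the existence of such a chart is étale-local and the minimality of $\sM'_{S}$ is open by the same remark, the extendedness condition is also open. The step I expect to require the most care is showing that if the chart form exists at a geometric point then it persists on an étale neighborhood; this is essentially standard for fine log structures but needs to be explicitly checked using the universal property in diagram (\ref{Diag:CurveCanLog}) applied componentwise over $V$.

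Putting these two observations together, $\fM_{V}^{ext}$ appears as an open substack of the algebraic stack $\sL og_{\fM_{V}}$, and is therefore itself an algebraic stack, which is the desired conclusion.
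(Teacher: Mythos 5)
Your proposal is correct and follows essentially the same strategy as the paper: the paper's proof simply says it is identical to that of Proposition \ref{prop:StackMinCurve}, which in turn realizes the stack as an open substack of $\sL og_{\fM_{V}}$ by citing the openness of minimality from Remark \ref{rem:min-open}. The only difference is that you spell out more carefully why the \emph{extended} minimality condition (not just minimality) is open -- a point the paper's one-line proof glosses over -- so your version is a slightly more explicit rendition of the same argument.
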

\begin{proof}
The proof is identical to the one for Proposition \ref{prop:StackMinCurve}. 
\end{proof}

\section{Log stable maps}\label{sec:general-target}
In this section we introduce our log structures on Jun Li's predeformable maps \cite{Jun1}. This is mainly Kim's log structure as in \cite{Kim}, but we add the standard log structures on markings and the fixed divisor on the target as in Section \ref{sss:DivLog}. Such difference is mainly for the purpose of the degeneration formula as in Theorem \ref{thm:main}.

\subsection{Kim's log structure on the target}\label{ss:LogStrTarget}
Here we gathering some of the definitions and results in \cite[section 4]{Kim}. 

\begin{defn}\label{def:FM-space}
An algebraic space $W$ over $S$ is called a {\em Fulton-Macpherson (FM) type space} if 
\begin{enumerate}
 \item $W\to S$ is a proper, flat map;
 \item for every closed point $s\in S$, \'etale locally there is an \'etale map 
 \[W_{\bar{s}}\to \mbox{Spec}k(\bar{s})[x,y,z_{1},\cdots,z_{r-1}]/(xy)\] where $x,y$ and $z_{i}$ are independent variables with the only relation $xy=0$.
\end{enumerate}

Consider a FM type space $W\to S$, if it admits a log smooth morphism 
\begin{equation}\label{equ:log-FM}
\pi:(W,\sM^{W/S}_{W})\to (S,\sM^{W/S}_{S}),
\end{equation}
then we call $W\to S$ the {\em log FM type space}. 
\end{defn}

\begin{rem}\label{rem:log-special}
Recall that in \cite{LogSS}, the log smooth morphism $\pi$ in (\ref{equ:log-FM}) is called {\em special} if:
\begin{enumerate}
 \item $\sM^{W/S}_{W}$ and $\sM^{W/S}_{S}$ are free;
 \item For any $w \in W$, the induced map $\pi^{\flat}:\overline{\pi^{*}\sM}^{W/S}_{\bar{w},S}\to \overline{\sM}^{W/S}_{\bar{w},W}$ is an isomorphism if $w$ is a smooth point of $W\to S$, or part of the cocatesian diagram 
 \[
 \xymatrix{
 \N \ar[rr]^{\Delta: e_{}\mapsto e_{1}+e_{2}} \ar[d]_{h} && \N^{2} \ar[d] \\
 \CharM_{\pi(\bar{w}),S}^{W/S} \ar[rr] && \CharM^{W/S}_{\bar{w},W}.
 }
 \] 
 when $w$ is in the singular locus, where $h(e_{})$ is an irreducible element.
 \item There is a bijection induced by the above diagram
 \[\mbox{Irr}W_{\bar{s}}^{sing}\to \mbox{Irr}\CharM_{\bar{s},S}^{W/S},\]
 where $\mbox{Irr}W_{\bar{s}}^{sing}$ is the set of irreducible components of the singular locus of $W_{\bar{s}}$ and $\mbox{Irr}\CharM_{\bar{s},S}^{W/S}$ is the set of irreducible elements of $\CharM_{\bar{s},S}^{W/S}$.
\end{enumerate}
Such log structure on $W$ and $S$ is called the {\em canonical log structure} associated to the log FM type space $W\to S$.
\end{rem}

\begin{defn}\label{def:log-twist-FM}
A pair $(W\to S,\sM_{S}^{W/S}\to\sM_{S})$ is called an {\em extended log twisted FM type space over $S$}, if $W\to S$ is a log FM type space, and the morphism of log structures $\sM_{S}^{W/S}\to\sM_{S}$ on $S$ is simple. Namely, \'etale locally at any point $s\in S$, there is a commutative diagram of charts:
\[
\xymatrix{
\sM_{S}^{W/S} \ar[rrrr] &&&& \sM_{S} \\
\N^{m}\ar[rr]^{(r_{1},\cdots,r_{m})} \ar[u]^{\theta^{W/S}} && \N^{m} \ar[rr]^{(id,0)\ \ } && \N^{m}\oplus\N^{n} \ar[u]_{\theta} 
}
\]
where the first bottom map is the diagonal map, and the maps $\theta^{W/S}$ and $\theta$ induce an isomorphism from $\N^{m}$ to $\overline{\sM}_{S,\bar{s}}^{W/S}$, and $\N^{m}\oplus\N^{n}$ to $\overline{\sM}_{S,\bar{s}}$ respectively.

The integer $r_{i}$ is called the {\em log twisting index}. If $n=0$, then the pair $(W\to S,\sM_{S}^{W/S}\to\sM_{S})$ is called a {\em unextended log twisted FM type space} or just {\em log twisted FM type space}. As in the case of curves, the log structure on $W$ is given by 
\[\sM_{W}=\pi^{*}(\sM_{S})\oplus_{\pi^{*}(\sM_{S}^{W/S})}\sM_{W}^{W/S}.\]
\end{defn}

Note that we have a log smooth, integral map $(W,\sM_{W})\to (S,\sM_{S})$.

\begin{defn}
We call $(W\to S, D)$ a smooth pair over $S$, if $D \hookrightarrow W\to S$ is a smooth divisor of $W$ over $S$. 
\end{defn}

Similarly we have the following:
\begin{defn}\label{def:LogTwistedFMPair}
A triple $(W\to S, D, \sM_{S}^{W/S}\to\sM_{S})$ is called a {\em (un)extended log twisted smooth pair over $S$}, if $(W\to S, \sM_{S}^{W/S}\to\sM_{S})$ is a (un)extended log twisted FM type space, and $(W\to S, D)$ is a smooth pair. We denote $\sM^{D}$ to be the log structure associated to the divisor $D$, thus the log structure on $W$ is defined to be 
\[\sM_{W}=\pi^{*}(\sM_{S})\oplus_{\pi^{*}(\sM_{S}^{W/S})}\sM_{W}^{W/S}\oplus_{\sO_{W}}\sM^{D}.\]
Note that we have a log smooth morphism $(W,\sM_{W})\to (S,\sM_{S})$.
\end{defn}

\begin{notation}
For simplicity, we will use $(W\to S,\sM_{S})$ to denote the extended log twisted FM type space or smooth pair when there is no confusion about the map $\sM_{S}^{W/S}\to\sM_{S}$ and the divisor $D$.
\end{notation}

\begin{defn}\label{def:target-arrow}
Consider two extended log twisted FM type spaces or smooth pairs $(W_{i}\to S_{i},\sM_{S_{i}})$ for $i=1,2$. An arrow $(W_{1}\to S_{1},\sM_{S_{1}})\to (W_{2}\to S_{2},\sM_{S_{2}})$ is a cartesian diagram of log schemes as follows:
\[
\xymatrix{
W_{1} \ar[r] \ar[d] & W_{2} \ar[d] \\
S_{1} \ar[r] & S_{2} 
}
\]
where the bottom arrow is strict.
\end{defn}

Consider a stack $\sB$, which parametrizes one of the following objects:
\begin{enumerate}
 \item a pair $(W\to S, W\to X\times S)$ such that $W\to S$ is log FM type space, and $X$ is a fixed scheme;
 \item a tuple $(W\to S,D,W\to X\times S)$ same as above but with a smooth divisor $D$ in $W$.
\end{enumerate}
We assume that $\sB$ is an algebraic stack, and $\sU\to \sB$ is its universal family. Thus $\sB$ has a log structure given by $\sM_{\sB}^{\sU/\sB}$ the canonical log structure given by this family. We view $\sB$ as a log stack with this canonical log structure. 

\begin{lem}\label{lem:target-stack-alg}
With the above assumptions, the stack $\sB^{tw}$ (respectively $\sB^{etw}$), which parametrizes (respectively extended) log twisted FM type space (or smooth pair) is an algebraic stack.
\end{lem}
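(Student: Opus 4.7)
The plan is to realize $\sB^{tw}$ and $\sB^{etw}$ as open substacks of Olsson's log stack $\sL og_{\sB}$, where $\sB$ is endowed with its canonical log structure $\sM_{\sB}^{\sU/\sB}$ coming from the universal family $\sU\to\sB$. Since $\sB$ is assumed algebraic, the appendix results recalled in Section \ref{s:LogStack} guarantee that $\sL og_{\sB}$ is algebraic. An object of $\sL og_{\sB}$ over a scheme $S$ is a strict morphism $S\to\sB$ together with a morphism of fine log structures $\sM_S^{W/S}\to\sM_S$; this is precisely the extra datum recorded by Definition \ref{def:log-twist-FM}, so the (extended) log twisted conditions are conditions on the morphism classified by $\sL og_{\sB}$.

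The next step is to stratify by the discrete invariants of the enhancement. The rank $m$ of $\overline{\sM}_{\sB}^{\sU/\sB}$ is locally constant on $\sB$, so one may work one component at a time. For each tuple $\vec r = (r_1,\ldots,r_m)\in\N_{>0}^m$ of prospective twisting indices and each additional rank $n\geq 0$, define $\sB^{etw}_{\vec r,n}\subset\sL og_{\sB}$ to be the substack where, \'etale locally on $S$, a chart of the form
\[
\sM_S^{W/S}\longrightarrow\sM_S, \qquad \N^m\xrightarrow{(r_1,\ldots,r_m)}\N^m\xrightarrow{(\mathrm{id},0)}\N^m\oplus\N^n
\]
as prescribed in Definition \ref{def:log-twist-FM} exists. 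I expect each $\sB^{etw}_{\vec r,n}$ to be an open substack of $\sL og_{\sB}$ for exactly the same reason that the extended minimal log prestable curves form an open substack of $\sL og_{\fM_{g,n}}$ in Proposition \ref{prop:StackMinCurve}: the existence of an \'etale chart of a prescribed shape for a fine log morphism cuts out an open substack of the Olsson log stack, which is the content invoked in Remark \ref{rem:min-open}. Granting this,
\[
\sB^{etw} \;=\; \bigsqcup_{\vec r,\,n}\, \sB^{etw}_{\vec r,n}, \qquad \sB^{tw} \;=\; \bigsqcup_{\vec r}\, \sB^{etw}_{\vec r,\,0}
\]
are algebraic stacks, being disjoint unions of open substacks of $\sL og_{\sB}$.

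The smooth-pair case is formally identical: the divisorial log structure $\sM^D$ of Definition \ref{def:LogTwistedFMPair} only enters $\sM_W$, not $\sM_S^{W/S}$, so it places no new condition on the morphism $\sM_S^{W/S}\to\sM_S$ being cut out. The genuine technical point, and the step I expect to be the main obstacle, is the openness of the locus in $\sL og_{\sB}$ where the characteristic morphism locally admits the diagonal-with-indices chart above; I plan to reduce this to the openness statement already exploited in the proofs of Remark \ref{rem:min-open} and Proposition \ref{prop:StackMinCurve}, rather than prove it from scratch, by translating the simple-morphism condition of Definition \ref{def:log-twist-FM} into a pointwise condition on the characteristic sheaves that is known to be open on any Olsson log stack.
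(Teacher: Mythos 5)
Your overall route --- realizing $\sB^{tw}$ and $\sB^{etw}$ as open substacks of Olsson's $\sL og_{\sB}$ --- is the same as the paper's, whose entire proof is this observation together with a citation to \cite{LogStack}. The detailed stratification you propose, however, is not correct, and the error is right at the start. The rank of $\overline{\sM}_{\sB}^{\sU/\sB}$ is upper semicontinuous, not locally constant: for example on $\sT^{u}$ or $\fT^{u}$ the rank at a geometric point is the number of connected singular loci in the corresponding expansion, which jumps upward on the deeper strata where more accordion components persist, even though these stacks are connected. Consequently the locus $\sB^{etw}_{\vec r, n}$ where the chart has exactly type $(\vec r, n)$ is only locally closed in $\sL og_{\sB}$, not open: specializing to a nearby point where the rank of $\overline{\sM}_{S}^{W/S}$ drops (some generators become units) produces a shorter tuple, so a point of type $(\vec r, n)$ has arbitrarily close points of strictly smaller type, and the type-locus cannot contain an open neighborhood of it. The proposed presentation of $\sB^{etw}$ as a disjoint union of open substacks therefore does not exist.

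What is true, and what the paper actually relies on, is that the single locus in $\sL og_{\sB}$ where $\sM_{S}^{W/S}\to\sM_{S}$ is simple (for \emph{some} type, in the sense of Definition \ref{def:log-twist-FM}) is open; this is the analogue of the openness proved for log curves in \cite[5.3.2]{Kim} and invoked in Remark \ref{rem:min-open}. That single openness statement, not a type-by-type version, gives the algebraicity of $\sB^{tw}$ and of $\sB^{etw}$. Your observation about the smooth-pair case --- that $\sM^{D}$ lives only on $W$ and imposes no extra condition on the base log structure --- is correct and agrees with the paper.
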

\begin{proof}
By Definition \ref{def:log-twist-FM} and \ref{def:LogTwistedFMPair} and \cite{LogStack}, the stack $\sB^{tw}$ (respectively $\sB^{etw}$) is an open substack of $\sL og_{\sB}$, hence an algebraic stack. 
\end{proof}

\begin{rem}\label{rem:remove-free}
We have a natural map $\tau: \sB^{etw}\to \sB^{tw}$ by removing the extended log structures. In this way, we view $\sB^{etw}$ as a log stack over $\sB^{tw}$. By \cite[3.19]{LogStack}, there is an open immersion $\sB^{tw}\to \sB^{etw}$.
\end{rem}

\begin{rem}
In Section \ref{sec:target}, we will introduce the stacks of expanded pairs and expanded degeneration. Those stacks will satisfy the conditions for $\sB$ as above.
\end{rem}
\subsection{Log Stable Maps}
Next we introduce Kim's definition for log stable maps \cite[section 5]{Kim}, with the only difference that we have the standard log structure given by markings on the source curve and the smooth divisors on the target.

\begin{defn}\label{def:LogPreMap}
A log morphism 
\[\xi: \ (f:(C,\sM_{C})\to(W,\sM_{W}))/(S,\sM_{S})\]
is called a {\em $(g,n)$ log prestable map over $S$} if:
\begin{enumerate}
\item $(C\to S, \sM_{S})$ is a genus $g$, $n$-pointed, extended minimal log prestable curve.
\item $(W\to S, \sM_{S})$ is an extended log twisted FM type space.
\item (\textbf{Corank condition}) For every point $s\in S$, the rank of $\Coker(\sM_{S,s}^{W/S}\to \sM_{S,s})$ coincides with the number of non-distinguished nodes on $C_{\bar{s}}$.
\item $f:(C,\sM_{C})\to(W,\sM_{W})$ is a log morphism.
\item (\textbf{Log admissibility}) the morphism of log structures $f^{\flat}:f^{*}\sM_{W}\to \sM_{C}$ is simple at every distinguished node.
\end{enumerate}
In case the source has disjoint components, the definitions are the same, except we put the conditions for genus and marked points on each connected components separately. Here a node is called {\em distinguished} if it maps to the singular locus of $W$, otherwise is called {\em non-distinguished}. 
\end{defn}

\begin{defn}\label{def:log-map-arrow}
An arrow $\xi\to \xi'$ between two log prestable maps is a log commutative diagram:
\[
\xymatrix{
(C,\sM_{C}) \ar[dd] \ar[rr]^{} \ar[rd] && (W,\sM_{W}) \ar[dd] \ar[ld] \\
  &(S,\sM_{S}) \ar[dd]& \\
(C',\sM_{C'}) \ar@{->}'[r][rr]^{} \ar[rd] && (W',\sM_{W'}) \ar[ld] \\
  &(S',\sM_{S'})&
  }
\]
where the top and bottom triangles correspond to $\xi$ and $\xi'$ respectively, the three side squares are all cartesian of log schemes, and the vertical arrow $(S,\sM_{S})\to (S',\sM_{S'})$ is strict.
\end{defn}

\begin{rem}
\begin{enumerate}
\item The corank condition implies that the log structure on the base is coming from the singular loci of $W$ and the non-distinguished nodes. It was proved in \cite[5.3.2]{Kim} that corank condition is an open condition.
\item It was shown in \cite[5.1.2]{Kim} that with conditions (1) - (4), the log admissibility forces the underlying maps to be predeformable in the sense of \cite{Jun1}. 
\end{enumerate}
\end{rem}

\begin{rem}\label{rem:LogAtSing}
For later use, we would like to give a short description of log prestable maps at the distinguished nodes as in \cite[5.2.3]{Kim}. Assume that $S=\Spec k$ is a geometric point. Let $D'\in W$ be a connected singular locus, and $p_{1},\cdots,p_{k}$ be the distinguished nodes that map to $D'$ via $f$, and $c_{1},\cdots,c_{k}$ is the corresponding contact order at each $p_{i}$. Denote by
\[r=l.c.m(c_{1},\cdots,c_{k}) \mbox{\ \ \ and\ \ \ }l_{i}=r/c_{i}.\] 
Let $m$ and $m'$ be the number of disjoint singular loci and non-distinguished nodes. Then locally at $p_{i}$ and $f(p_{i})$ we have charts given by 
\[(\N^{2}\oplus_{\N}\N)\oplus\N^{m+m'-1} \to \sM_{C}\]
and
\[(\N^{2}\oplus_{\N}\N)\oplus\N^{m+m'-1} \to \sM_{W},\]
where the first amalgamated sum are given by the push-out of:
\[
\xymatrix{
\N \ar[rr]^{e\mapsto l\cdot e'} \ar[d]_{e\mapsto a+b} && \N \\
\N^{2}
}
\]
and the second amalgamated sum are given by the push-out of:
\[
\xymatrix{
\N \ar[rr]^{e\mapsto r\cdot e'} \ar[d]_{e\mapsto x+y} && \N \\
\N^{2}
}
\]

On the level of charts, the morphism of log structures $f^{*}\sM_{W}\to \sM_{C}$ is given by identity on $\N^{m+m'-1}$ and the following diagram:
\[
\xymatrix{
\N^{2}\oplus_{\N}\N \ar[rrrr]^{(x,y)\mapsto (c_{i}\cdot a,c_{i}\cdot b)} &&&& \N^{2}\oplus_{\N}\N\\
&&\N \ar[ull]^{e'\mapsto (0,e')} \ar[rru]_{e'\mapsto (0,e')}&&
}
\]
We remark that such description also holds when the source curve has disjoint components.
\end{rem}

\begin{rem}\label{rem:MapToSmoothTarget}
As pointed out in \cite[5.3.1]{Kim}, if the target $W\to S$ is a family of smooth projective variety over $S$, then any stable map to such target is an underlying usual stable map equipped with the canonical log structure from its underlying curve. In this case, all nodes are non-distinguished.
\end{rem}

Let $(W\to S,D)$ be a smooth pair. Denote by $N$ the index set of all marked points with $|N|=n$, and $M\subset N$ a subset. Let $\Bc=(c_{j})_{j\in M}$ be the tuple of positive integers, which will be the assigned tangency multiplicity at the marked points indexed by $M$. 

\begin{defn}\label{def:relative-log-map}
A log map
\[(f:(C,\sM_{C})\to(W,D,\sM_{W}))/(S,\sM_{S})\]
is called a {\em $(g,n)$ relative log prestable map over $S$ with tangency condition $\Bc$} if:
\begin{enumerate}
\item By removing the log structure associated to $D$, the map $f$ is a $(g,n)$ log prestable map over $S$.
\item The underlying map of $f$ is non-degenerate, namely no component of $C$ maps into $D$ via the map $f$.
\item The marked points indexed by $M$ has assigned tangency multiplicity $\Bc$ such that $f_{*}[C].D=\sum_{j\in M}c_{j}$.
\end{enumerate}
Similarly when the source curves has disjoint components, we put the conditions for genus, marked points and the intersection multiplicity on each connected components separately.
\end{defn}

\begin{rem}\label{rem:LogOnMarking}
For any $j\in M$, with log structure put on marked points $\Sigma_{j}$ and the smooth divisor $D$, the underlying map induces the map $\sM^{\Sigma}\to f^{*}(\sM^{D})$, which \'etale locally has the following chart:
\[
\xymatrix{
f^{*}(\sM^{D}) \ar[rr] &&  \sM^{\Sigma}\\
\N \ar[u] \ar[rr]^{\log x\mapsto c_{j}\cdot\log \delta} && \N \ar[u],
}
\]
where $x$ and $\delta$ is the local coordinates whose vanishing correspond to $\Sigma$ and $D$ respectively. Since the underlying map is non-degenerate by Definition \ref{def:relative-log-map}(2), the morphism of log structures $f^{\flat}$ along the marked points with tangency condition is uniquely determined by the underlying map. Note that the tangency condition with log structures is an open condition, for example see \cite[Precise location]{Chen}.
\end{rem}

\begin{defn}\label{def:StabilityCond}\cite[5.2.4]{Kim}
Let $W\to S$ be equipped with a map $\pi_{X}:W\to X$ from $W$ to a scheme. A (relative) log prestable map defined as above is called {\em log stable} if for any point $s\in S$, the group of automorphisms $(\sigma,\tau)$ is finite where
\begin{enumerate}
 \item $\sigma$ is an automorphism of $(C\to S,\sM_{S})_{\bar{s}}$ preserving the marked points.
 \item $\tau$ is an automorphism of $(W\to S,\sM_{S})_{\bar{s}}$ preserving $W_{\bar{s}}\to X$.
 \item $\tau\circ f_{\bar{s}}=f_{\bar{s}}\circ \sigma$.
\end{enumerate}
The automorphism of the underlying maps can be defined similarly without considering the log structures.
\end{defn}

\begin{rem}
It was shown in \cite[6.2.4]{Kim} that $f$ is log stable if and only if the underlying map $\underline{f}$ is stable.
\end{rem}

\begin{defn}
A (relative) log stable map $f$ is said to have curve class $\beta\in H_{2}(X,\Z)$, if $(\pi_{X}\circ f)_{*}[C_{\bar{s}}]=\beta$ for every point $s \in S$.
\end{defn}

Next we consider the case when the source curves has disjoint components. We fix a smooth pair $(W\to S, D)$ and a map $\pi_{X}: W\to X\times S$. Follow \cite{Jun1} and \cite{AF}, we introduce the following refined data for disconnected relative log stable maps.
\begin{defn}\label{def:WeightGraph}
An {\em admissible weighted graph} $\Xi$ is a collection of vertices $V(\Xi)$, legs $L(\Xi)$ and roots $R(\Xi)$ but with no edges, coupled with the following data:
\begin{enumerate}
\item each vertex $v \in V(\Xi)$ is assigned a non-negative integers $g(v)$, and and a curve class $\beta(v)\in H_{2}(X;\Z)$;
\item each root $j\in R(\Xi)$ is assigned a positive integer $c_{j}$;
\end{enumerate}
Furthermore, we require the graph $\Xi$ to be relatively connected, i.e. either $V(\Xi)$ contains a single element or each vertex in $V(\Xi)$ has at least one root attached to it.
\end{defn}

For each vertex $v\in V(\Xi)$, we denote $R_{v}$ and $L_{v}$ to be the sets of roots and legs that attached to $v$.
\begin{defn}
A log morphism 
\[(f:(C,\sM_{C})\to(W,D,\sM_{W}))/(S,\sM_{S})\]
is called a {\em $\Xi$-relative log stable maps over $S$} if
\begin{enumerate}
 \item $C$ is a disjoint union of $\{C_{v}\}_{v\in V(\Xi)}$ such that each $(C\to S, \sM_{S})$ is an extended minimal log prestable curves with disjoint components over $S$ with genus and marked points on each connected component given by $g(v)$ and $R_{v}\cup L_{v}$ respectively.
 \item The curve class for each $f|_{C_{v}}$ is $\beta(v)$.
 \item The map $f$ is a relative log prestable map with assigned tangency multiplicity $c_{j}$ along the marked points indexed by $j\in R_{v}$ such that $\beta(v).D=\sum_{j\in R_{v}}c_{j}$.
 \item The map $f$ satisfies the stability condition in Definition \ref{def:StabilityCond}.
\end{enumerate} 
\end{defn}

\subsection{Stack of log stable maps}\label{ss:map-stack}
We fix the stack $\sB$ as in Lemma \ref{lem:target-stack-alg}. Denote by $\sU\to \sB$ the universal family over $\sB$. Define $\sK^{log}_{g,n}(\sU/\sB,\beta)$ (resp. $\sK^{log}_{\Xi}(\sU/\sB)$) to be the stack parameterizing (resp. relative) log stable maps of genus $g$, $n$ marked points with curve class $\beta$ (resp. with data $\Xi$). Similarly, we define $\sK_{g,n}(\sU/\sB,\beta)$ and $\sK_{\Xi}(\sU/\sB)$ to be the stack parameterizing the underlying structure of the log stable maps. For simplicity, we use the short hand notation $\sK_{\sB}^{log}$ for $\sK^{log}_{g,n}(\sU/\sB,\beta)$ or $\sK^{log}_{\Xi}(\sU/\sB)$, and $\sK_{\sB}$ for $\sK_{g,n}(\sU/\sB,\beta)$ or $\sK_{\Xi}(\sU/\sB)$. 

\begin{rem}\label{rem:predeformable}
The maps parametrized by $\sK_{\sB}$ is called stable predeformable maps as in \cite{Jun1}. Consider a fixed stable predeformable map. It was shown in \cite[6.3.1(2)]{Kim} that there exists (not necessarily unique) log stable maps whose underlying structures is identical to the given predeformable map. 
\end{rem}

Note that we have a map 
\[\sK^{log}_{\sB}\to \fM^{log}\times_{LOG}\sB^{etw}\]
by sending a map to its source log curve and target. Here $\fM^{log}$ is the stack parametrizing the source log curves, which can be either $\fM^{ext}_{g,n}$ as in Proposition \ref{prop:StackMinCurve} or $\sM_{V}$ as in Proposition \ref{prop:StackDisjointCurve}, and the product is in the category of log stacks. Similarly, we have the map 
\[ \sK_{\sB}\to \fM\times \sB\]
where $\fM$ is the stack parameterizing corresponding curves without log structures. The following result is proved in \cite{Kim}.

\begin{thm}\label{thm:LogStackForMap}
If the stack $\sK_{\sB}$ is a proper DM-stack, then so is $\sK^{log}_{\sB}$.
\end{thm}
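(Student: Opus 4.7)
My plan is to analyze the natural forgetful 1-morphism $\Phi\colon \sK^{log}_{\sB} \to \sK_{\sB}$ that strips away all log structures from the data, and to prove that $\Phi$ is representable by finite morphisms. Once this is established, both properness and the Deligne--Mumford property will transfer from $\sK_{\sB}$ to $\sK^{log}_{\sB}$ via $\Phi$. The existence half of this transfer is essentially Kim's theorem, so I would largely reduce the proof to invoking results from \cite{Kim}, organized as follows.

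The first and most substantial step is a local analysis of the fibers of $\Phi$. Fix a geometric point of $\sK_{\sB}$ corresponding to a stable predeformable map $\underline{f}\colon \underline{C}\to\underline{W}$ over $\underline{S}$. Non-emptiness of $\Phi^{-1}(\underline{f})$ is precisely Remark \ref{rem:predeformable}. For finiteness, note that the canonical log structures $\sM^{C/S}_{C}, \sM^{C/S}_{S}$ on the (marked) prestable curve and $\sM^{W/S}_{W}, \sM^{W/S}_{S}$ on the FM-type target are intrinsic to the underlying data, so the only genuine choices in a log enhancement are: (i) the simple morphism $\sM^{W/S}_{S}\to\sM_{S}$ encoding the twisting indices $r_i$ of Definition \ref{def:log-twist-FM} together with the corank contribution from non-distinguished nodes demanded by Definition \ref{def:LogPreMap}(3); and (ii) the morphism $f^{\flat}$ at the distinguished nodes. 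The minimality condition of Definition \ref{def:MinCond} pins $\sM_{S}$ down up to finite ambiguity, and the explicit charts in Remark \ref{rem:LogAtSing} show that the contact orders $c_i$ are discrete invariants of $\underline{f}$ and that $f^{\flat}$ at distinguished nodes is rigidly determined by them. The log structures $\sM_{C}$ and $\sM_{W}$ are then obtained by the strict pullbacks from $\sM_{S}$ in diagrams~(\ref{Diag:CurveCanLog}) and Definition~\ref{def:LogTwistedFMPair}. This yields a finite set of isomorphism classes in each fiber.

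The DM property follows once representability is in place: an automorphism of a log stable map that induces the identity on its underlying data must act trivially on the canonical log structures, and by the explicit charts in Remark \ref{rem:LogAtSing} it reduces to a finite group of root-of-unity type data along the distinguished nodes. Hence $\Phi$ has finite inertia on geometric fibers, so $\sK^{log}_{\sB}$ inherits the DM property from $\sK_{\sB}$. For properness I would verify the valuative criterion for $\Phi$: separatedness follows from the uniqueness-up-to-isomorphism of minimal log enhancements, while existence reduces to extending a family of log stable maps over $\mathrm{Spec}\,K$ to $\mathrm{Spec}\,R$ by constructing the limit log structure from the canonical log structures of the extended curve and target (whose underlying extension is provided by properness of $\sK_{\sB}$) and checking that minimality, the corank condition, and log admissibility at distinguished nodes all propagate to the closed point.

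The main obstacle is exactly the finiteness in step one: one must carefully control the interplay between the canonical log structures on source and target, the simple morphism $\sM^{W/S}_{S}\to\sM_{S}$, and the rigidified $f^{\flat}$ at distinguished nodes, in order to conclude that only finitely many non-isomorphic log enhancements lie over each predeformable map. This is precisely the combinatorial analysis carried out in \cite{Kim} (in particular around the local models recalled in Remark \ref{rem:LogAtSing} and the openness results of Remarks \ref{rem:min-open} and of the corank condition), which I would cite rather than reproduce in order to conclude the theorem.
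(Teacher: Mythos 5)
Your proposal takes a genuinely different route from the paper. The paper establishes that $\sK^{log}_{\sB}$ is a locally closed substack of the fiber product $\sK_{\sB}\times_{\fM\times \sB}\fM^{log}\times_{LOG}\sB^{etw}$ (citing \cite[5.3.2]{Kim}), which gives algebraicity directly, and then cites \cite[6.3.2]{Kim} for properness. You instead propose to run everything through the forgetful morphism $\Phi\colon \sK^{log}_{\sB}\to\sK_{\sB}$. That reorganization is appealing, but it has two genuine gaps.

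First, $\Phi$ is not representable by finite morphisms, nor even representable by algebraic spaces. Over a geometric point $S=\Spec k$, an automorphism of a log enhancement inducing the identity on the underlying predeformable map and on the characteristic $\CharM_S$ is given by a homomorphism $\epsilon\colon\CharM_S^{gp}\to k^*$; compatibility with the canonical arrows $\sM_S^{C/S}\to\sM_S$ and $\sM_S^{W/S}\to\sM_S$ pins $\epsilon$ on each irreducible $b\in\CharM_S$ only up to a root of unity $\mu_{\ell}$ where $a\mapsto\ell\cdot b$ (Definition \ref{def:MinCond}(2), Definition \ref{def:log-twist-FM}). This is exactly the $\mu_r$-gerbe phenomenon that reappears later in Lemma \ref{lem:GlueHalfAcc}. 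So $\Phi$ has nontrivial relative inertia whenever any twisting index exceeds $1$, and you can at best claim that $\Phi$ is quasi-finite, proper, and \emph{relatively Deligne--Mumford}. Your conclusions about the transfer of properness and the DM property do survive this correction — a proper relatively-DM morphism over a proper DM stack still yields a proper DM stack — but the argument as written asserts something false and needs to be restated.

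Second, and more importantly, your proposal never establishes that $\sK^{log}_{\sB}$ is an algebraic stack to begin with. If $\Phi$ were representable by schemes, algebraicity would be automatic from that of $\sK_{\sB}$; but since $\Phi$ is not representable, you cannot bootstrap algebraicity this way. The paper sidesteps this by exhibiting $\sK^{log}_{\sB}$ inside a fiber product whose factors $\fM^{log}$ (Propositions \ref{prop:StackMinCurve}, \ref{prop:StackDisjointCurve}) and $\sB^{etw}$ (Lemma \ref{lem:target-stack-alg}) have already been shown algebraic via Olsson's $\sL og$-stack machinery, and then quoting \cite{Kim} that the locus cut out by the corank and admissibility conditions is locally closed. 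You would need to import that step as well before the rest of your argument can even get off the ground. With those two repairs — replacing ``representable by finite morphisms'' with ``finite and relatively DM,'' and first invoking the locally closed embedding to secure algebraicity — your outline converges to essentially the same proof the paper gives, since both ultimately lean on Kim's finiteness and properness results.
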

\begin{proof}
When the target is log FM type spaces, it was shown in \cite[5.3.2]{Kim} that $\sK^{log}_{\sB}$ is a locally closed substack of $\sK_{\sB}\times_{\fM\times \sB}\fM^{log}\times_{LOG}\sB^{etw}$, therefore an algebraic stack. When the target is smooth pairs, the map of log structures on the markings are uniquely determined by the underlying structure as in Remark \ref{rem:LogOnMarking}. Thus the same proof as in \cite{Kim} still holds. The properness is proved in \cite[6.3.2]{Kim}. Finally, since we are working over an algebraic field of characteristic zero, the finiteness of automorphisms implies the diagonal of $\sK^{log}_{\sB}$ is unramified.
\end{proof}

\begin{rem}\label{rem:StackSmTarget}
Let us consider the case where the target is a smooth projective variety $X$. By Remark \ref{rem:MapToSmoothTarget}, the stack $\sK^{log}_{g,n}(X,\beta)$ is the usual stack parameterizing stable maps to $X$ from genus $g$, $n$-marked curves with curve class $\beta$. The log structure on $\sK^{log}_{g,n}(X,\beta)$ is the canonical one corresponding to the smoothing of its universal curves.
\end{rem}

\begin{rem}\label{rem:ForgetNondistNode}
Note that the structure arrow $\sK_{B}^{log}\to \sB$ factor through $\sK^{log}_{\sB}\to\sB^{etw}\to\sB^{tw}$. In fact, it is convenient to view $\sK^{log}_{\sB}$ as a log stack over $\sB^{tw}$. On the one hand, relative to $\sB^{tw}$ still gives the information of the log structure along each singular locus of the target. On the other hand, we get rid of the information of log structures from the non-distinguished nodes. Later we will construct a perfect obstruction theory relative to $\sB^{tw}$
\end{rem}

\begin{rem}\label{rem:BaseChange}
Consider any $T\to \sB$. Denote by $\sU_{T}=\sU\times_{\sB}T$ the universal family over $T$, and $p:\sU_{T}\to \sU$ to be the projection. Let $\beta'$ be the curve class in the fiber of $\sU_{T}$ induced by $\beta$. Denote by $\Xi'$ the data obtained by replacing $\beta$ with $\beta'$ in $\Xi$. Let $\sK_{T}$ to be either $\sK_{g,n}(\sU_{T}/T,\beta)$ or $\sK_{\Xi}(\sU_{T}/T)$. Then we say that $\sK_{\sB}$ is {\em well-behaved under base change}, if we have the following catesian diagram:
\[
\xymatrix{
\sK_{T} \ar[r] \ar[d] & \sK_{\sB} \ar[d]\\
T \ar[r] & \sB.
}
\]
Here the top arrow is given by sending a log map $f$ to $p\circ f$. With this assumption, it is not hard to see that the log stack $\sK_{\sB}^{log}$ is also well-behaved under base change, namely we have the following catesian diagram:
\[
\xymatrix{
\sK^{log}_{T} \ar[r] \ar[d] & \sK_{\sB}^{log} \ar[d] \\
T^{tw} \ar[r] \ar[d] & \sB^{tw} \ar[d] \\
T \ar[r] & \sB.
}
\]
\end{rem}

\section{The Obstruction theory and virtual fundamental classes}\label{sec:fundamental-class}
In this section, we use the method developed in \cite[Appendix C]{AF} to define the perfect obstruction theory for the log stable maps. Denote by $\BL^{log}$ and $\BL^{G}$ the Olsson's and Gabber's log cotangent complex respectively in the sense of \cite{LogCot}. We use $\BL$ for the usual cotangent complex. We refer to \cite{LogCot} for properties of these log cotangent complexes.

\subsection{Perfect obstruction theory for (relative) log stable maps}\label{ss:ConstructVir}
We adopt the notations in Section \ref{ss:map-stack}. Denote by $\sC$ the universal curve over $\sK^{log}_{\sB}$. Let $\sU^{tw}$ and $\sU^{etw}$ be the universal families of targets over $\sB^{tw}$ and $\sB^{etw}$ respectively. By Remark \ref{rem:ForgetNondistNode}, we have the following commutative diagram of log stacks:
\begin{equation}\label{diag:total-diag}
\xymatrix{
\sC \ar@/^/[rrrd]^{f'} \ar[rd]^{f} \ar@/_/[rdd]_{q} &&&\\
&\sU^{etw}_{\sK} \ar[r] \ar[d]^{\psi} & \sU^{etw} \ar[d] \ar[r] & \sU^{tw} \ar[d]\\
&\sK^{log}_{\sB} \ar[r] &\sB^{etw} \ar[r] & \sB^{tw},
}
\end{equation}
where the squares are all cartesian squares of log stacks, and $f$ is the universal (relative) log stable map. By the functoriality of Gabber's log cotangent complex \cite[8.24]{LogCot}, we have triangles
\begin{equation}\label{eq:UpCot}
f^{*}\BL^{G}_{\sU^{etw}_{\sK}/\sU^{tw}}\to\BL^{G}_{\sC/\sU^{tw}}\to\BL^{G}_{f}
\end{equation}
and
\begin{equation}\label{eq:LowCot}
f^{*}\BL^{G}_{\sU^{etw}_{\sK}/\sK_{\sB}^{log}}\to\BL^{G}_{\sC/\sK_{\sB}^{log}}\to\BL^{G}_{f}.
\end{equation}
By \cite[8.29]{LogCot} and the definition of the log stable maps, we have 
\[\BL^{G}_{\sU^{etw}_{\sK}/\sU^{etw}}\cong \BL^{log}_{\sU^{etw}_{\sK}/\sU^{etw}}, \ \ \ \BL^{G}_{\sU^{etw}_{\sK}/\sK_{\sB}^{log}}=\BL^{log}_{\sU^{tw}_{\sK}/\sK_{\sB}^{log}},\] 
and 
\[\BL^{G}_{\sC/\sK_{\sB}^{log}}=\BL^{log}_{\sC/\sK_{\sB}^{log}}.\] 
Thus, by (\ref{eq:LowCot}) we have the following triangle
\begin{equation}\label{equ:LowLogCot}
f^{*}\BL^{log}_{\sU^{etw}_{\sK}/\sK_{\sB}^{log}}\to\BL^{log}_{\sC/\sK_{\sB}^{log}}\to\BL^{log}_{f}.
\end{equation}
This gives a canonical isomorphism $\BL^{log}_{f}\cong \BL^{G}_{f}$. Denote by $\BL_{\square}$ the log cotangent complex $\BL^{log}_{f}$.

\begin{lem}\label{lem:PullBackCotCx}
$\BL^{log}_{\sU^{etw}_{\sK}/\sU^{tw}} \cong  \psi^{*}\BL^{log}_{\sK_{\sB}^{log}/\sB^{tw}}$
\end{lem}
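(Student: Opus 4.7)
The plan is to exhibit the isomorphism as an instance of base change for the log cotangent complex, applied to a cartesian square of fine log stacks. First I would paste the two leftmost cartesian squares in diagram (\ref{diag:total-diag})---the squares expressing $\sU^{etw}_{\sK}$ as the log fibre product $\sK^{log}_{\sB}\times_{\sB^{etw}}\sU^{etw}$ and $\sU^{etw}$ as $\sB^{etw}\times_{\sB^{tw}}\sU^{tw}$---to obtain the cartesian square
\[
\xymatrix{
\sU^{etw}_{\sK} \ar[r] \ar[d]^{\psi} & \sU^{tw} \ar[d] \\
\sK^{log}_{\sB} \ar[r] & \sB^{tw}.
}
\]
Equivalently, $\sU^{etw}_{\sK} \cong \sK^{log}_{\sB}\times_{\sB^{tw}}\sU^{tw}$ as fine log stacks.

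Next I would observe that, by Definition \ref{def:log-twist-FM}, the universal log twisted FM type space $\sU^{tw}\to\sB^{tw}$ is equipped with a log smooth integral morphism to its base, and is in particular log flat. Applying the log flat base change for Olsson's log cotangent complex from \cite{LogCot} to the cartesian square above would then yield directly the desired isomorphism $\BL^{log}_{\sU^{etw}_{\sK}/\sU^{tw}} \cong \psi^{*}\BL^{log}_{\sK_{\sB}^{log}/\sB^{tw}}$.

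The main obstacle is confirming the precise form of log flat base change for $\BL^{log}$ needed here. Should it not be directly available in the stated generality, my fallback is to split the argument along the intermediate stack $\sU^{etw}$. In the upper square the bottom arrow $\sK^{log}_{\sB}\to\sB^{etw}$ is strict---the base log structure of a log stable map agrees, by the corank condition, with the log structure classifying its target as an extended log twisted FM type space---so its pullback $\sU^{etw}_{\sK}\to\sU^{etw}$ is also strict and ordinary flat base change applies, identifying $\BL^{log}_{\sU^{etw}_{\sK}/\sU^{etw}}$ with $\psi^{*}\BL^{log}_{\sK^{log}_{\sB}/\sB^{etw}}$. In the lower square the right vertical $\sU^{tw}\to\sB^{tw}$ is log smooth, so log flat base change in its original setting identifies $\BL^{log}_{\sU^{etw}/\sU^{tw}}$ with the pullback of $\BL^{log}_{\sB^{etw}/\sB^{tw}}$. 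Combining the two base-change isomorphisms through the transitivity triangles for the towers $\sU^{etw}_{\sK}\to\sU^{etw}\to\sU^{tw}$ and $\sK^{log}_{\sB}\to\sB^{etw}\to\sB^{tw}$---and invoking the identification $\BL^{log}\cong \BL^{G}$ as in the discussion leading to (\ref{equ:LowLogCot}) wherever Gabber's version has more robust base-change behaviour---would then assemble into the required isomorphism.
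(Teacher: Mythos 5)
Your fallback argument contains the core computation that the paper uses --- the upper square, with $\sK^{log}_{\sB}\to\sB^{etw}$ strict and $\sU^{etw}\to\sB^{etw}$ flat, so ordinary flat base change gives $\BL_{\sU^{etw}_{\sK}/\sU^{etw}}\cong\psi^{*}\BL_{\sK^{log}_{\sB}/\sB^{etw}}$. But you then treat this as an intermediate result needing to be \emph{transported} up to $\BL^{log}_{\cdot/\sU^{tw}}$ and $\BL^{log}_{\cdot/\sB^{tw}}$ via transitivity triangles for the towers $\sU^{etw}_{\sK}\to\sU^{etw}\to\sU^{tw}$ and $\sK^{log}_{\sB}\to\sB^{etw}\to\sB^{tw}$. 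That step does not go through as written. Olsson's $\BL^{log}$ famously fails to admit distinguished transitivity triangles in general --- this is precisely the pathology that motivates the introduction of $\BL^{G}$ --- and the appeal to ``$\BL^{log}\cong\BL^{G}$ wherever Gabber's version has more robust behaviour'' is not backed up: you would need to verify $\BL^{log}\cong\BL^{G}$ for the specific morphisms $\sK^{log}_{\sB}\to\sB^{tw}$ and $\sU^{etw}_{\sK}\to\sU^{tw}$ (which are neither log smooth nor strict), and that is not established anywhere in this paper. Your first approach has a similar problem: a clean ``log flat base change'' isomorphism for $\BL^{log}$ applied to the big pasted square is not available in the generality required, as you yourself flag.

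The point you are missing is that no transitivity triangle is needed, because the two sides of the lemma are \emph{already} the ordinary cotangent complexes you computed. By \cite[3.2]{LogCot}, $\BL^{log}_{\sK^{log}_{\sB}/\sB^{tw}}$ is by definition the ordinary cotangent complex of the induced map $\sK^{log}_{\sB}\to\sL og_{\sB^{tw}}$, and this map factors through the \emph{open} substack $\sB^{etw}\subset\sL og_{\sB^{tw}}$ (Lemma \ref{lem:target-stack-alg}); hence $\BL^{log}_{\sK^{log}_{\sB}/\sB^{tw}}\cong\BL_{\sK^{log}_{\sB}/\sB^{etw}}$ on the nose. Similarly $\BL^{log}_{\sU^{etw}_{\sK}/\sU^{tw}}\cong\BL_{\sU^{etw}_{\sK}/\sU^{etw}}$. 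Substituting these identifications into your upper-square base-change isomorphism gives the lemma immediately, with no lower square, no transitivity triangles, and no need to pass through $\BL^{G}$. Your observation about strictness of $\sK^{log}_{\sB}\to\sB^{etw}$ is the same fact expressed differently (a strict morphism to $\sB^{etw}\subset\sL og_{\sB^{tw}}$ is exactly a factorization of $\sK^{log}_{\sB}\to\sL og_{\sB^{tw}}$ through that open substack), so you were essentially one reinterpretation away from the paper's proof.
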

\begin{proof}
By \cite[3.2]{LogCot}, the log cotangent complex $\BL_{\sK^{log}_{\sB}/\sB^{tw}}^{log}$ is the usual cotangent complex of the map of algebraic stacks:
\[
h : \sK^{log}_{\sB}\to \sL og_{\sB^{tw}}
\] 
where the above map is induced by the map of log stacks $\sK^{log}_{\sB}\to \sB^{tw}$. By our construction of log stable maps, the image of $h$ lies in the open sub-stack $\sB^{etw}\subset \sL og_{\sB^{tw}}$. Thus, we have
\[
\BL_{\sK^{log}_{\sB}/\sB^{tw}}^{log}\cong \BL_{\sK^{log}_{\sB}/\sB^{etw}}.
\]
Similarly, we have 
\[
\BL^{log}_{\sU^{etw}_{\sK}/\sU^{tw}}\cong \BL_{\sU^{etw}_{\sK}/\sU^{etw}}.
\]
Note that the squares in (\ref{diag:total-diag}) are cartesian squares of log stacks. Now the statement follows from the base change of usual cotangent complex. 
\end{proof}

By Lemma \ref{lem:PullBackCotCx} and (\ref{eq:UpCot}), we have an arrow $\BL_{\square}[-1]\to q^{*}\BL_{\sK^{log}_{\sB}/\sB^{tw}}^{log}$. Let $\omega_{q}$ be the dualizing complex of $q$, then it is a line bundle sitting in degree $-1$. By tensoring with $\omega_{q}$, we have 
\[\BL_{\square}\otimes \omega_{q}[-1]\to q^{*}\BL_{\sK^{log}_{\sB}/\sB^{tw}}^{log}\otimes \omega_{q}.\]
Since $q^{!}(\BL_{\sK^{log}_{\sB}/\sB^{tw}}^{log})=q^{*}\BL_{\sK^{log}_{\sB}/\sB^{tw}}^{log}\otimes \omega_{q}$, and $q^{!}$ is left adjoint to $q_{*}$, we have an arrow
\begin{equation}\label{equ:obs1}
q_{*}(\BL_{\square}\otimes \omega_{q}[-1])\to \BL_{\sK^{log}_{\sB}/\sB^{tw}}^{log}.
\end{equation}
Denote by $\E_{\sK_{\sB}^{log}/\sB^{tw}}=q_{*}(\BL_{\square}\otimes \omega_{q}[-1])$. A standard argument yields: 
\begin{equation}\label{equ:obs2}
\E_{\sK_{\sB}^{log}/\sB^{tw}}\cong(q_{*}(\BL_{\square})^{\vee})^{\vee}[-1].
\end{equation} 

\begin{prop}
The arrow $\E_{\sK_{\sB}^{log}/\sB^{tw}} \to \BL_{\sK^{log}_{\sB}/\sB^{tw}}^{log}$ in derived category gives a perfect obstruction theory relative to $\sB^{etw}$.
\end{prop}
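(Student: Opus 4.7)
My plan is to follow the pattern established in Appendix~C of \cite{AF}, adapted to Olsson's log cotangent complex machinery. First I translate the statement into ordinary (non-log) language: as observed in the proof of Lemma~\ref{lem:PullBackCotCx}, the natural map $\sK^{log}_{\sB}\to\sL og_{\sB^{tw}}$ factors through the open sub-stack $\sB^{etw}\subset \sL og_{\sB^{tw}}$, so
\[
\BL^{log}_{\sK^{log}_{\sB}/\sB^{tw}}\cong \BL_{\sK^{log}_{\sB}/\sB^{etw}}.
\]
Therefore it suffices to verify that $\E_{\sK^{log}_{\sB}/\sB^{tw}}\to \BL_{\sK^{log}_{\sB}/\sB^{etw}}$ is a morphism from a perfect complex of amplitude $[-1,0]$ satisfying the Behrend--Fantechi criterion: $h^{0}$ an isomorphism and $h^{-1}$ a surjection.

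For perfectness, the key is to show that $\BL_{\square}=\BL^{log}_{f}$ is perfect of amplitude $[-1,0]$ on $\sC$. Away from distinguished nodes and (in the relative case) marked points with tangency, the map $f$ is log smooth, so $\BL^{log}_{f}$ is a locally free sheaf in degree $0$. At a distinguished node, the explicit chart in Remark~\ref{rem:LogAtSing} exhibits $f^{\flat}$ as a simple morphism between Deligne--Faltings log structures built from monoids of shape $\N^{2}\oplus_{\N}\N$; computing the log cotangent complex directly from this chart produces a two-term complex of locally free sheaves in degrees $[-1,0]$. The analogous local calculation at tangency marked points uses Remark~\ref{rem:LogOnMarking}. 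Since $q:\sC\to\sK^{log}_{\sB}$ is proper, flat, of relative dimension one with dualizing complex $\omega_{q}$ a shifted line bundle, Grothendieck--Serre duality produces the isomorphism in~(\ref{equ:obs2}), and a cohomology-and-base-change argument shows $\E$ is perfect of amplitude $[-1,0]$.

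For the Behrend--Fantechi criterion, I run the standard infinitesimal lifting argument. Let $T\hookrightarrow T'$ be a square-zero closed immersion of $\sB^{etw}$-stacks with ideal $J$, and let $g:T\to \sK^{log}_{\sB}$ be given, corresponding to a log stable map $f_{T}:\sC_{T}\to \sU^{etw}_{\sK,T}$ over $T$. A lift of $g$ to $T'$ over $\sB^{etw}$ is equivalent to a lift of $f_{T}$ with its source log curve and log target already deformed, both of which are canonically supplied by the given $\sB^{etw}$-lift of $T$ via base change in~(\ref{diag:total-diag}). By Olsson's log deformation theory \cite{LogCot}, the torsor of such lifts and its obstruction class are controlled by the Ext-groups of $\BL^{log}_{f_{T}}=\BL_{\square}|_{\sC_{T}}$ with values in $q_{T}^{*}J$. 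Pulling back the triangle~(\ref{equ:LowLogCot}) and invoking Lemma~\ref{lem:PullBackCotCx}, and then applying the Grothendieck--Serre adjunction $Rq_{*}\dashv q^{!}$ (with $q^{!}(\cdot)=q^{*}(\cdot)\otimes\omega_{q}$) that underlies~(\ref{equ:obs1}), one identifies these Ext-groups with the corresponding Ext-groups of $g^{*}\E$ on $T$, which is precisely the identification required for the Behrend--Fantechi criterion.

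The main obstacle is this last step: carefully matching log deformation theory of $f_{T}$ with deformation theory of the moduli map $g$ over $\sB^{etw}$. One must ensure that fixing an $\sB^{etw}$-lift of $T$ rigidifies exactly the log-deformations of the source $(C,\sM_{C})$ and the target $(W,\sM_{W})$ over $(T',\sM_{T'})$, so that only the map $f_{T}$ itself is free to deform; the local rigidity at distinguished nodes encoded in Remark~\ref{rem:LogAtSing} is what makes this comparison work. The local computation of $\BL^{log}_{f}$ at distinguished nodes is technical but routine once the explicit Deligne--Faltings chart is in hand.
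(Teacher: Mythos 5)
Your perfectness argument rests on the claim that ``away from distinguished nodes and marked points with tangency, the map $f$ is log smooth, so $\BL^{log}_{f}$ is a locally free sheaf in degree $0$.'' This is incorrect: $f:\sC\to\sU^{etw}_{\sK}$ is a log stable map, which can contract components, be ramified, and in general has no constant relative dimension --- it is certainly not log smooth away from the distinguished nodes. Consequently $\BL^{log}_{f}$ genuinely sits in two degrees even at a generic smooth point of $\sC$, and your local chart computations at distinguished nodes and tangency markings are neither necessary nor sufficient (they say nothing, for instance, about points where $f$ collapses a component).

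What the paper actually uses is the log smoothness of the two \emph{projections} $\sC\to\sK^{log}_{\sB}$ and $\sU^{etw}_{\sK}\to\sK^{log}_{\sB}$, not of $f$ itself. By \cite[1.1(iii)]{LogCot}, each of these log smooth integral morphisms has log cotangent complex equal to its sheaf of log differentials, a locally free sheaf in degree $0$; the transitivity triangle~(\ref{eq:LowCot}) then exhibits $\BL_{\square}$ as the cone of a map between two such sheaves, so $\BL_{\square}$ is automatically perfect of amplitude $[-1,0]$ --- no local analysis is needed. A second, subtler point: you appeal to ``Olsson's log deformation theory'' for $\BL^{log}_{f_T}$, but Olsson's $\BL^{log}$ does not in general admit transitivity triangles or govern square-zero deformations; that is the role of Gabber's $\BL^{G}$, which is the version invoked in \cite[8.31]{LogCot}. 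The paper first records the isomorphism $\BL^{G}_{f}\cong\BL^{log}_{f}$ in this setting (so one may compute with $\Omega^{log}$ while retaining Gabber's deformation theory). Without making this identification explicit, your infinitesimal lifting argument is not working with a complex known to have the needed deformation-theoretic properties.
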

\begin{proof}
Note that $\BL_{\square}=\BL^{G}_{f}$. The obstruction theory argument follows from the deformation theory of Gabber's log cotangent complex \cite[8.31]{LogCot}. Since the families are log smooth, by \cite[1.1(iii)]{LogCot} we have
\[f^{*}\BL^{G}_{\sU^{etw}_{\sK}/\sK_{\sB}^{log}}\cong f^{*}\BL^{log}_{\sU^{etw}_{\sK}/\sK_{\sB}^{log}} \cong (f')^{*}\Omega_{\sU^{tw}/\sB^{tw}}^{log}\]
and 
\[\BL^{G}_{\sC/\sK_{\sB}^{log}} \cong \BL^{log}_{\sC/\sK_{\sB}^{log}} \cong \Omega_{\sC/\sK_{\sB}^{log}}^{log}.\] 
By (\ref{eq:LowCot}), $\BL_{\square}$ is the cone of the following locally free sheaves:
\begin{equation}\label{eq:ConeOfCot}
(f')^{*}\Omega_{\sU^{tw}/\sB^{tw}}^{log} \to \Omega_{\sC/\sK_{\sB}^{log}}^{log}.
\end{equation}
Since $q$ is proper and flat of relative dimension $1$, we have $\E_{\sK_{\sB}^{log}/\sB^{tw}}=\mbox{R}q_{*}(\BL_{\square}\otimes \omega_{q}[-1])$ is perfect in $[-1,0]$. 
\end{proof}

By \cite{Behrend-Fantechi} and \cite{Kresch}, the perfect obstruction theory $\E_{\sK_{\sB}^{log}/\sB^{tw}}$ gives a virtual fundamental class of $\sK^{log}_{\sB}$ relative to $\sB^{etw}$. Denote by $[\sK_{\sB}^{log}]$ the resulting virtual cycle.

\begin{rem}
Consider the morphism $\tau: \sB^{etw}\to \sB^{tw}$ given in Remark \ref{rem:remove-free}. The map $\tau$ is smooth in the usual sense. Consider the following map of complexes in derived category:
\[\E_{\sK^{log}/\sB^{tw}}[-1]\to \BL_{\tau}.\]
Denote by $\F_{\sK^{log}/\sB^{tw}}$ the cone of the above morphism. By \cite[A1]{Bryan-Leung}, we have a map 
\[\F_{\sK^{log}/\sB^{tw}}\to \BL_{\sK^{log}_{\sB}/\sB^{tw}},\]
which gives a perfect obstruction theory for $\sK^{log}_{\sB}$ relative to $\sB^{tw}$. Furthermore, the virtual fundamental class induced by $\F_{\sK^{log}/\sB^{tw}}$ coincides with the one given by $\E_{\sK^{log}/\sB^{tw}}$.
\end{rem}

\subsection{Comparison of the virtual fundamental class when target is smooth}
When the target is a smooth projective variety $X$ over $S=\Spec\C$, we assume that $X$ and $S$ are log schemes with trivial log structure. We have already seen in Remark \ref{rem:StackSmTarget} that the stack $\sK=\sK^{log}_{g,n}(X,\beta)$ is the stack $\fM_{g,n}(X,\beta)$ of usual stable maps with the canonical log structure given by its universal curves. We have the following universal diagram:
\[
\xymatrix{
\sC_{\sK} \ar[r]^{f} \ar[d]^{q}& X \\
\sK,
}
\]
where $\sC_{\sK}$ is the universal family of curves, and $f:\sC_{\sK}\to X$ is the universal map over $\sK$. In this case, the log cotangent complex $\BL_{\square}$ is giving by the cone of the following:
\[f^{*}\Omega_{X}\to \Omega_{\sC_{\sK}/\sK}^{log}.\]
Then $\E_{\sK/S}=q_{*}(\BL_{\square}\otimes\omega_{q}[-1])$ gives a perfect obstruction theory for $\sK$ relatively over $S^{ext}$, where $S^{ext}$ parametrizes log schemes with free log structures over $S$. 

On the one hand, let $\fM_{g,n}$ be the stack of usual genus $g$, $n$-pointed prestable curves. Denote by $\sC_{g,n}$ the universal curves of $\fM_{g,n}$, and $\sM_{\fM_{g,n}}$ the canonical log structure of $\fM_{g,n}$ as in Section \ref{ss:LogPresCurve}, which is locally free. Since $\fM_{g,n}$ can be viewed as a smooth (hence log smooth) stack with free log structures over $S$, we have a natural smooth map $\fM_{g,n}\to S^{ext}$. 

On the other hand, we have a natural map $\sK\to \fM_{g,n}$. Note that the log structure on $\sK$ is given by the pull-back of $\sM_{\fM_{g,n}}$ via this map. Therefore, we have a commutative diagram of log stacks:
\begin{equation}\label{diag:CompVirFund}
\xymatrix{
\sK \ar[d]_{g} \ar[dr]^{h_{2}} \\
\fM_{g,n} \ar[r]^{h_{1}} & S^{ext}
}
\end{equation}
Since $h_{1}$ is strict, by \cite[3.2]{LogCot} we have $\BL_{h_{1}}\cong \BL_{\fM_{g,n}}^{log}$. 

Consider the following
\[C_{g,n} \to \fM_{g,n} \to S.\]
This induces a triangle
\[q^{*}\BL_{\fM_{g,n}}^{log}\to \BL_{C_{g,n}}^{log} \to \Omega_{C_{g,n}/\fM_{g,n}}^{log},\]
hence a map $\Omega_{C_{g,n}/\fM_{g,n}}^{log}[-1]\to q^{*}\BL_{\fM_{g,n}}^{log}$. Repeat the argument in Section \ref{ss:ConstructVir}, we obtain an obstruction theory:
\[
q_{*}(\Omega_{C_{g,n}/\fM_{g,n}}^{log}\otimes\omega_{q})[-1]\to \BL_{\fM_{g,n}}^{log}.
\]
Since $h_{1}$ is smooth in the usual sense, by \cite[5.6]{LogCot} and \cite[17.9.1]{LMB} the above map is an isomorphism, namely
\begin{equation}\label{eq:CotLogCurve}
\BL_{h_{1}}\cong q_{*}(\Omega_{C_{g,n}/\fM_{g,n}}^{log}\otimes\omega_{q})[-1].
\end{equation}

By \cite{Behrend-Fantechi}, we have a perfect obstruction theory 
\[q_{*}(f^{*}\Omega_{X}\otimes\omega_{q})\to \BL_{\sK/\fM_{g,n}}\]
for $\sK$ relative to $\fM_{g,n}$. This induces the Behrend-Fantechi virtual fundamental class for $\sK$. By (\ref{diag:CompVirFund}), we have the following composition of maps
\begin{equation}\label{eq:RelativeToLog}
q_{*}(f^{*}\Omega_{X}\otimes\omega_{q})[-1]\to \BL_{\sK/\fM_{g,n}}[-1]\to g^{*}\BL_{h_{1}}.
\end{equation} 
Let $\F$ be the cone of (\ref{eq:RelativeToLog}). Then $F$ induces a perfect obstruction theory of $\sK$ relative to $S^{ext}$. By \cite[A1]{Bryan-Leung}, the virtual fundamental class associated to $F$ coincide with the Behrend-Fantechi class. 

By (\ref{eq:CotLogCurve}), $F$ is the cone of 
\[q_{*}(f^{*}\Omega_{X}\otimes\omega_{q})[-1]\to q_{*}(\Omega_{C_{g,n}/\fM_{g,n}}^{log}\otimes\omega_{q})[-1].\]
Comparing with (\ref{equ:obs1}), we have $F\cong E_{\sK/S}$, where $E_{\sK/S}$ is the perfect obstruction theory constructed in Section \ref{ss:ConstructVir}. This gives the following result:

\begin{prop}
If the target is a smooth projective variety over $\C$, the virtual fundamental class $[\sK]$ induced by $E_{\sK/S}$ coincides with the Behrend-Fantechi class.
\end{prop}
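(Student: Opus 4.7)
The plan is to construct, via the commutative diagram \eqref{diag:CompVirFund}, an auxiliary perfect obstruction theory $F$ for $\sK$ relative to $S^{ext}$ that interpolates between the Behrend-Fantechi theory and $\E_{\sK/S}$, then identify $F$ with $\E_{\sK/S}$ on the nose while invoking the compatibility result of \cite{Bryan-Leung} on the Behrend-Fantechi side.

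First I would justify the identification $\BL_{h_1} \cong \BL^{log}_{\fM_{g,n}}$. Since the log structure on $\fM_{g,n}$ is the canonical free one $\sM_{\fM_{g,n}}$ and $\fM_{g,n}$ is viewed as a log stack with free log structures over the trivial log base $S$, the map $h_1\colon \fM_{g,n} \to S^{ext}$ is strict, so \cite[3.2]{LogCot} applies. I would then compute $\BL^{log}_{\fM_{g,n}}$ concretely by repeating the construction of Section \ref{ss:ConstructVir} for the log smooth family $C_{g,n} \to \fM_{g,n}$; since $h_1$ is also smooth in the usual sense, the resulting obstruction morphism becomes an isomorphism by \cite[5.6]{LogCot} combined with \cite[17.9.1]{LMB}, yielding \eqref{eq:CotLogCurve}.

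Next I would form the cone $F$ of the composition \eqref{eq:RelativeToLog}, which is a perfect obstruction theory for $\sK$ relative to $S^{ext}$ built from the standard Behrend-Fantechi theory $q_*(f^*\Omega_X \otimes \omega_q) \to \BL_{\sK/\fM_{g,n}}$ by adjoining $g^*\BL_{h_1}$. Applying \cite[A1]{Bryan-Leung} to the tower $\sK \to \fM_{g,n} \to S^{ext}$, where the second arrow is smooth in the usual sense, shows that the virtual class attached to $F$ equals the Behrend-Fantechi class. Finally, combining \eqref{eq:CotLogCurve} with flat base change for the proper flat morphism $q$ identifies $F$ with the cone of $q_*(f^*\Omega_X \otimes \omega_q)[-1] \to q_*(\Omega^{log}_{\sC_\sK/\sK} \otimes \omega_q)[-1]$, which is precisely $\E_{\sK/S}$ by \eqref{equ:obs1} and the triangle \eqref{eq:ConeOfCot} defining $\BL_\square$.

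The main obstacle I anticipate is the careful verification that the obstruction-theoretic morphism $F \to \BL_{\sK/S^{ext}}$ coming from the cone construction coincides, under the isomorphism $F \cong \E_{\sK/S}$, with the morphism built in Section \ref{ss:ConstructVir}; this amounts to an octahedral diagram chase together with base change of log cotangent complexes along the cartesian square over $\fM_{g,n}$, and once set up it is mechanical.
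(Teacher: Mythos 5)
Your proposal follows the paper's own argument step for step: identify $\BL_{h_1}\cong\BL^{log}_{\fM_{g,n}}$ via strictness and \cite[3.2]{LogCot}, compute it via \cite[5.6]{LogCot} and \cite[17.9.1]{LMB} to get \eqref{eq:CotLogCurve}, form the cone $F$ of \eqref{eq:RelativeToLog}, invoke \cite[A1]{Bryan-Leung} to match $F$ with the Behrend--Fantechi class, and finally identify $F\cong\E_{\sK/S}$ by comparing with \eqref{equ:obs1}. This is essentially identical to the paper's proof; your closing remark about checking that the two obstruction-theory morphisms agree under $F\cong\E_{\sK/S}$ is a fair note of a detail the paper leaves implicit.
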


\subsection{The perfect obstruction theory under base change}
Assume that the stack $\sK_{\sB}$ behaves well under base change as in Remark \ref{rem:BaseChange}. Then the stack $\sK_{\sB}^{log}$ is also well-behaved under base change. Consider a morphism $\psi:T \to \sB$. We have the following base change diagram from Remark \ref{rem:BaseChange}:
\[
\xymatrix{
\sK^{log}_{T} \ar[r]^{\phi} \ar[d] & \sK_{\sB}^{log} \ar[d] \\
T^{tw} \ar[r] \ar[d] & \sB^{tw} \ar[d] \\
T \ar[r]^{\psi} & \sB.
}
\]

\begin{prop}\label{prop:PerObsBaseChange}
\[\E_{\sK_{T}^{log}/T^{tw}}\cong \mbox{L}\phi^{*}\E_{\sK_{\sB}^{log}/\sB^{tw}}.\]
\end{prop}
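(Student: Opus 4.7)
The plan is to combine three compatibilities: base change of the log cotangent complex along the strict cartesian diagrams produced by Remark \ref{rem:BaseChange}, flat base change for the relative dualizing complex of the universal curve, and cohomology--and--base--change for the proper flat pushforward $q_*$.

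First I would set up the extended base change diagram. Writing $\sC$ and $\sC_T$ for the universal curves, $\sU^{etw}_{\sK}$ and $\sU^{etw}_{\sK_T}$ for the universal targets, and $f, f_T$ for the universal log stable maps, Remark \ref{rem:BaseChange} gives strict cartesian squares
\[
\xymatrix{
\sC_T \ar[r]^{\tilde\phi} \ar[d]_{q_T} & \sC \ar[d]^{q} \\
\sK^{log}_T \ar[r]^{\phi} & \sK^{log}_{\sB}
}
\qquad
\xymatrix{
\sU^{etw}_{\sK_T} \ar[r] \ar[d] & \sU^{etw}_{\sK} \ar[d] \\
\sK^{log}_T \ar[r]^{\phi} & \sK^{log}_{\sB},
}
\]
with $f_T$ obtained from $f$ by this base change. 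Because every square is strict, the Gabber and Olsson log cotangent complexes agree with the cotangent complex of the underlying map into the appropriate log stack $\sL og$, and the base change property (see \cite[1.1(4)]{LogCot}) yields
\[
L\tilde\phi^*\,\BL_\square \;\cong\; \BL_{\square_T},
\]
where $\BL_{\square_T} = \BL^{log}_{f_T}$. Alternatively, one can use the triangle (\ref{eq:ConeOfCot}) expressing $\BL_\square$ as the cone of $(f')^*\Omega^{log}_{\sU^{tw}/\sB^{tw}} \to \Omega^{log}_{\sC/\sK^{log}_\sB}$ between locally free sheaves, whose formation manifestly commutes with pullback along the strict maps above.

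Next I would invoke flat base change for the relative dualizing line bundle: since $q$ is proper, flat of pure relative dimension $1$ with nodal fibres, its dualizing complex is a shifted line bundle and $\omega_{q_T} \cong \tilde\phi^*\omega_{q}$. Combining with the previous step,
\[
\BL_{\square_T}\otimes \omega_{q_T}[-1] \;\cong\; L\tilde\phi^*\bigl(\BL_\square\otimes \omega_q[-1]\bigr).
\]
Finally I would apply cohomology and base change along the left square. Because $q$ is proper and flat and the complex $\BL_\square\otimes\omega_q[-1]$ is perfect of Tor-amplitude $[-1,0]$ (as shown in the proof that $\E_{\sK^{log}_\sB/\sB^{tw}}$ is a perfect obstruction theory), the derived pushforward commutes with arbitrary derived pullback on the base:
\[
L\phi^*\, Rq_*\bigl(\BL_\square\otimes\omega_q[-1]\bigr)
\;\cong\; Rq_{T,*}\, L\tilde\phi^*\bigl(\BL_\square\otimes\omega_q[-1]\bigr)
\;\cong\; Rq_{T,*}\bigl(\BL_{\square_T}\otimes\omega_{q_T}[-1]\bigr).
\]
Unwinding the definitions, the left-hand side is $L\phi^*\E_{\sK^{log}_\sB/\sB^{tw}}$ and the right-hand side is $\E_{\sK^{log}_T/T^{tw}}$, proving the claim.

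The only subtle step is the first one, i.e.\ the base change for $\BL_\square$; the issue is that the log cotangent complex does not in general commute with pullback along non-strict morphisms, so one must check carefully that the relevant squares are strict cartesian in the log category. This follows from Remark \ref{rem:BaseChange}, together with the observation that $\sU^{etw}_{\sK_T} \to \sU^{etw}_\sK$ is the pullback of the strict morphism $\sK^{log}_T \to \sK^{log}_\sB$ and hence itself strict. The remaining steps (flat base change for $\omega_q$ and cohomology--and--base--change on $q$) are standard consequences of $q$ being proper, flat, and of relative dimension one.
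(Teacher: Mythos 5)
Your proof is correct and follows essentially the same three-step decomposition as the paper: pull back $\BL_\square$ along the strict cartesian square of universal curves (the paper does this exactly as in your alternative route, via the explicit cone presentation (\ref{eq:ConeOfCot}) of locally free sheaves), pull back $\omega_q$, and apply derived cohomology-and-base-change to $Rq_*$. The only cosmetic difference is in the last step's justification: you appeal to perfectness of Tor-amplitude $[-1,0]$ directly, while the paper cites the Bondal--Orlov base change argument adapted to families of prestable curves; these are the same underlying fact.
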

\begin{proof}
Note that we have the following catesian diagram of log stacks:
\[
\xymatrix{
\sC_{T} \ar[r]^{\phi'} \ar[d]_{q'} & \sC \ar[d]^{q}\\
\sK^{log}_{T} \ar[r]^{\phi}  & \sK_{\sB}^{log} ,
}
\]
where $\sC$ is the universal curve over $\sK_{\sB}^{log}$. Denote by $\BL_{\square}'$ the corresponding complex over $\sC_{T}$ as in Section \ref{ss:ConstructVir}. Since $\BL_{\square}$ is the cone of (\ref{eq:ConeOfCot}), and the family of targets are both log smooth and integral over the base, we can pull-back the complex via $\psi$. Then we have a canonical isomorphism 
\[\BL_{\square}'\cong \mbox{L}(\phi')^{*}\BL_{\square}.\] 
Furthermore, we have the canonical isomorphism of dualizing complexes 
\[\omega_{q'}\cong\mbox{L}(\phi')^{*}\omega_{q}.\] 
Thus, we have the following:
\[\E_{\sK_{T}^{log}/T^{tw}}\cong\mbox{R}q_{*}'(\mbox{L}(\phi')^{*}(\BL_{\square}'\otimes\omega_{q'}[-1]))\cong \mbox{L}\phi^{*}(\mbox{R}q_{*}(\BL_{\square}\otimes\omega_{q}[-1]))\cong \mbox{L}\phi^{*}\E_{\sK_{\sB}^{log}/\sB^{tw}}.\]
Note that the middle isomorphism follows from the usual base change theorem and the proof of \cite[1.3]{Bondal-Orlov}, where the proof still works when the fiber is a family of prestable curves. This finishes the proof of the statement. 
\end{proof}

Denote by $[\sK_{T}^{log}]$ the virtual fundamental class of $\sK_{T}^{log}$ relative to $T^{etw}$ given by $\E_{\sK_{T}^{log}/T^{tw}}$. Then the above proposition implies that:
\begin{cor}
$[\sK_{T}^{log}] = \phi^{*}([\sK_{\sB}^{log}])$.
\end{cor}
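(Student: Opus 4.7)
My plan is to deduce the corollary directly from Proposition \ref{prop:PerObsBaseChange} by invoking the standard compatibility of Behrend--Fantechi / Kresch virtual fundamental classes under base change when the relative perfect obstruction theories pull back to one another. Concretely, Proposition \ref{prop:PerObsBaseChange} has just established the derived-pullback identification $\E_{\sK_{T}^{log}/T^{tw}}\cong \mathrm{L}\phi^{*}\E_{\sK_{\sB}^{log}/\sB^{tw}}$, which is precisely the compatibility hypothesis needed to run the base-change formalism for virtual classes.

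First I would set up the cartesian diagram of algebraic stacks
\[
\xymatrix{
\sK_{T}^{log} \ar[r]^{\phi} \ar[d] & \sK_{\sB}^{log} \ar[d] \\
T^{etw} \ar[r] & \sB^{etw}
}
\]
obtained from the base-change description in Remark \ref{rem:BaseChange}, together with the smooth structure morphisms $T^{etw}\to T^{tw}$ and $\sB^{etw}\to\sB^{tw}$. Since the obstruction theories $\E_{\sK_{\sB}^{log}/\sB^{tw}}$ and $\E_{\sK_{T}^{log}/T^{tw}}$ give perfect obstruction theories for $\sK_{\sB}^{log}$ (resp.\ $\sK_{T}^{log}$) relative to $\sB^{etw}$ (resp.\ $T^{etw}$), via the cone construction recalled after \eqref{equ:obs2}, and since taking cones commutes with derived pullback, the compatibility from Proposition \ref{prop:PerObsBaseChange} descends to a compatibility of these relative obstruction theories along the cartesian square above.

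At this point the statement reduces to the standard fact that in a cartesian diagram with a compatible pair of relative perfect obstruction theories --- formally, that the horizontal pullback takes the intrinsic normal cone of the right column isomorphically onto that of the left column --- one has the equality of virtual classes $[\sK_{T}^{log}]=\phi^{*}[\sK_{\sB}^{log}]$, where $\phi^{*}$ is interpreted as the induced virtual pullback (or refined Gysin pullback when $\psi: T\to\sB$ factors appropriately). This is a direct application of the compatibility results of Behrend--Fantechi (and their extension to Artin stacks by Kresch); equivalently, it is the functoriality of Manolache's virtual pullback applied to the square above.

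The main obstacle, in my view, is purely bookkeeping: one has to check carefully that pulling back along $\phi$ commutes not only with the cotangent complex $\BL_{\square}$ (which Proposition \ref{prop:PerObsBaseChange} handles via log smoothness and integrality of the target families) but also with the pushforward $\mathrm{R}q_{*}$ in the definition $\E=\mathrm{R}q_{*}(\BL_{\square}\otimes\omega_{q}[-1])$ and with the passage $/\sB^{tw}\leadsto /\sB^{etw}$. The first has essentially been dispatched in the proof of Proposition \ref{prop:PerObsBaseChange}, and the second is immediate because $\sB^{etw}\to\sB^{tw}$ is smooth and the formation of $T^{etw}$ is compatible with base change; the remainder is a direct citation of the base-change formula for virtual classes.
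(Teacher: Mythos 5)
Your proposal is correct and takes essentially the same route as the paper: the paper's proof consists of the single remark that Proposition \ref{prop:PerObsBaseChange} implies the corollary, and your argument simply spells out the standard Behrend--Fantechi/Kresch/Manolache base-change machinery that makes this implication work. The one point you flag that the paper leaves implicit --- that $\phi^{*}$ must be read as the appropriate (flat, Gysin, or virtual) pullback depending on $\psi: T\to\sB$ --- is a genuine clarification, but the underlying argument is the same.
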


\section{Expanded pairs and degenerations}\label{sec:target}
Following \cite{Jun1} and \cite{AF}, we will introduce expanded pairs, degenerations, and their stacks. Since they are log FM type spaces as in Definition \ref{def:FM-space}, we will consider the corresponding log stacks as in Lemma \ref{lem:target-stack-alg}.

\subsection{Log twisted half accordions and log twisted expanded pairs}\label{ss:ExpPair}
Consider a smooth pair $(X,D)$, where $X$ is a smooth projective scheme, and $D\subset X$ is a connected smooth divisor. Denote by $N_{D/X}$ the normal bundle of $D$ in $X$, and $\Proj=\Proj_{D}(\sO_{D}\oplus N_{D/X})$ the projective completion of the normal bundle $N_{D/X}$. Note that we have a canonical isomorphism $\Proj_{D}(\sO_{D}\oplus N_{D/X})\cong \Proj_{D}(\sO_{D}\oplus N_{D/X}^{\vee})$. Let $D^{+}$ and $D^{-}$ be the divisors in $\Proj$ with normal bundles $N_{D/X}$ and $N_{D/X}^{\vee}$ respectively. Then we have canonical isomorphisms of divisors 
\[D\cong D^{-}\cong D^{+}.\] 

Consider $\Proj_{i}$ for $i=0,\cdots,n-1$, which are $n$ copies of $\Proj$. Using the canonical isomorphisms of divisors, we can glue $X$ and $\Proj_{1}$ along $D$ and $D^{-}$, write the resulting normal crossing singularity to be $D_{1}$; and glue $\Proj_{i}$ and $\Proj_{i+1}$ along $D^{+}\subset \Proj_{i}$ and $D^{-}\subset \Proj_{i+1}$, write the resulting normal crossing singularity to be $D_{i+1}$. In such a way, we obtain a scheme
\begin{equation}\label{eq:HalfAccordion}
X[n]=X\coprod_{D_{1}}\Proj_{1}\coprod_{D_{2}}\cdots\coprod_{D_{n-1}}\Proj_{n-1}.
\end{equation}
Note that there is a natural projection $\Proj \to D$, which induces a projection $X[n]\to X$ by contracting $\Proj_{i}$ for all $i$. Denote by $D_{n}$ the divisor $D^{+}$ in $\Proj_{n-1}$. Now we have a sequence of morphisms $D\cong D_{n}\hookrightarrow X[n] \to X$, we call it the {\em $n$-th half accordion over $(X,D)$}. 

Denote by $\sT^{u}$ the stack of {\em expanded pairs over $(X,D)$}, which associate to every reduced scheme $S$ a sequence of morphisms $D\times S\hookrightarrow \sX \to X\times S$ over $S$, such that
\begin{enumerate}
 \item $D\times S \hookrightarrow \sX$ is a closed embedding;
 \item the family $\sX \to S$ is flat;
 \item for every point $s\in S$, the fiber $D\to \sX_{s}\to X$ is a half accordion over $(X,D)$.
\end{enumerate} 

\begin{rem}
The stack $\sT^{u}$ is studied in \cite{ACFW} and \cite{AF}, which has the following properties.
\begin{enumerate}
 \item The stack $\sT^{u}$ is a connected, smooth algebraic stack of dimension $0$.
 \item The stack do not depend on the choice of the pair $(X,D)$.
\end{enumerate}
\end{rem}

\begin{rem}\label{rem:CanLogForHalfAccd}
By the above gluing construction, any expanded pair $D\times S \to \sX \to X\times S$ over a reduced scheme $S$ is d'semistable along its singularity as in \cite{LogSS}. Thus the family $\sX\to S$ has a canonical log structures as described in Remark \ref{rem:log-special}. Therefore, the family $\sX\to S$ is a log FM type space with the divisor $D\times S \hookrightarrow \sX$.  
\end{rem}

Now we introduce the log twisting for expanded pairs.

\begin{defn}\label{def:LogTwHalfAccod}
A {\em (un)extended log twisted expanded pair over $S$} is a (un)extended log twisted smooth pair $(D\times S \to \sX \to X\times S, \sM_{S}^{\sX/S}\to\sM_{S})$ as in Definition \ref{def:LogTwistedFMPair}, where $D\times S \to \sX \to X\times S$ is a family of expanded pairs over $S$. In the rest of the paper, we use $(\sX\to S,\sM_{S})$ for the log twisted half accordion over $S$, when no confusion could arise.
\end{defn}

Same as in Section \ref{ss:LogStrTarget}, we have smooth algebraic stacks $\sT^{tw}$ and $\sT^{etw}$ parameterizing unextended and extended log twisted expanded pairs respectively. Denote by $\sX^{u}$, $\sX^{tw}$, and $\sX^{etw}$ the universal family of $\sT^{u}$, $\sT^{tw}$, and $\sT^{etw}$ respectively. We have the canonical log structures $\sM_{\sX^{u}}$, $\sM_{\sX^{tw}}$, $\sM_{\sX^{etw}}$, and $\sM_{\sT^{u}}$, $\sM_{\sT^{tw}}$, $\sM_{\sT^{etw}}$ on the families and bases respectively.

\begin{rem}\label{rem:ConstructStackPair}
We would like to describe the structure of $\sT^{u}$ given in \cite{ACFW} and \cite{AF}. Denote by $U_{k}=[\A^{k}/\G_{m}^{k}]\cong \sA^{k}$, where each $\G_{m}$ acts on the corresponding copy of $\A$ by multiplication, and $k$ is a positive integer. This $U_{k}$ carries a universal family $\sX_{k}$ of expanded degenerations with splitting divisors labeled by elements of $\{1,\cdots,k\}$. The locus $\Delta_{i}$ where the $l$-th splitting divisor persists corresponds to $x_{i}=0$ in $\A^{k}$. Thus we see that the canonical log structure $\sM_{U_{k}}^{\sX_{k}/U_{k}}$ corresponds to the smooth divisors $\Delta_{i}$ for $i=1,\cdots,k$. For each strictly increasing map $\{1,\cdots,k\}\to \{1,\cdots,k'\}$, there is a natural open embedding 
\begin{equation}\label{equ:expand-pair}
U_{k}\cong U_{k}\times [\A^{*}/\G_{m}]^{k'-k}\to U_{k'}.
\end{equation}
The set of all such $U_{k}$ forms an \'etale open cover of $\sT^{u}$.

The stack $\sT^{u}$ is given by taking the limit of $U_{k}$ in the categorical sense with the arrow described in (\ref{equ:expand-pair}).
\end{rem}

\begin{prop}\label{prop:LogTwistPair}
The stack $\sT^{tw}$ is identical to the stack $\sT^{tw}_{1}$ in \cite{AF} which parameterizing twisted expanded pairs with no twists along the smooth divisor.
\end{prop}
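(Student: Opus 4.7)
The plan is to show that the local charts of $\sT^{tw}$ coincide, together with their transition maps, with those defining $\sT^{tw}_{1}$ in \cite{AF}. First I would recall from Remark \ref{rem:ConstructStackPair} that $\sT^u$ admits an \'etale cover by the charts $U_k \cong \sA^k$ carrying the canonical log structure $\sM^{\sX_k/U_k}_{U_k}$ associated with the smooth divisors $\Delta_1,\dots,\Delta_k$ (each $\Delta_i$ being the persistence locus of the $i$-th splitting divisor). In particular, \'etale locally at a geometric point $s$ lying in $\bigcap_{i\in I}\Delta_i$ with $|I|=m$, one has a chart $\N^m \to \sM^{\sX/S}_{S}$ identifying $\overline{\sM}^{\sX/S}_{S,\bar{s}}$ with $\N^m$ and with the irreducible generators corresponding bijectively to the local branches of the singular locus, as required by the ``special'' condition of Remark \ref{rem:log-special}.

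Next, I would unwind Definition \ref{def:log-twist-FM} in the unextended case ($n=0$): an object of $\sT^{tw}$ over $S$ is, locally, a simple morphism of fine log structures given on charts by the diagonal $\N^m \to \N^m$ twisted by positive integers $(r_1,\dots,r_m)$. By the standard dictionary between log structures and root stacks (Olsson, Borne--Vistoli, recalled in the appendix of \cite{AF}), specifying such a simple extension of $\sM^{\sX/S}_{S}$ with twisting indices $r_i$ is equivalent to the datum of an $r_i$-th root of the line bundle--section pair $(\sO_S(\Delta_i),s_{\Delta_i})$ for each $i$. Thus the pullback of $U_k$ to $\sT^{tw}$ is precisely the iterated root stack $U_{k,\vec r}:=\sqrt[r_1]{\Delta_1/U_k}\times_{U_k}\cdots\times_{U_k}\sqrt[r_k]{\Delta_k/U_k}$, where $(r_1,\dots,r_k)$ runs over all tuples of positive integers.

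Then I would verify compatibility with the open embeddings in (\ref{equ:expand-pair}). For an increasing map $\{1,\dots,k\}\hookrightarrow\{1,\dots,k'\}$, the corresponding open embedding $U_k\hookrightarrow U_{k'}$ identifies the $k$ chosen divisors $\Delta_i$ and sends the remaining divisors to their complements; the twisting indices on the unused factors of $\N$ become trivial, so the root stack construction is functorial in the expected way. Taking the (2-)colimit over all $(k,\vec r)$ in the appropriate index category yields a stack which, by construction, is exactly the stack $\sT^{tw}_1$ defined via root stacks in \cite{AF}.

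The main obstacle, and the point requiring the most care, will be the dictionary step: I must check that the ``simple'' condition in Definition \ref{def:log-twist-FM} corresponds precisely to the root stack datum with no twisting permitted along the smooth divisor $D$ (as opposed to along the singular divisors $\Delta_i$). Since in the unextended case the chart of $\sM_{S}^{\sX/S}\to \sM_S$ involves only the irreducible generators coming from the singular locus, and there is no contribution of $D$ to $\sM^{\sX/S}_S$, no twisting index is attached to $D$, matching the ``no twists along the smooth divisor'' restriction in the definition of $\sT^{tw}_1$. Once this identification of local charts and transitions is in place, the two stacks are literally the same by their common description as colimits of root stacks of the $U_k$.
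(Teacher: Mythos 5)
Your proposal follows the same route the paper takes: identify the charts of $\sT^{tw}$ pulled back from the \'etale cover $\{U_k\}$ of $\sT^u$ with the iterated root stacks $U_{k,\Br}$, invoke the Olsson/Borne--Vistoli dictionary between simple extensions of a locally free log structure and root stacks along the underlying divisors, check compatibility with the transition maps of (\ref{equ:expand-pair}), and conclude by taking the colimit. The paper's proof is terser but appeals to exactly the same ingredients (the root stacks $U_{k,\Br}$, the maps $U_{k,\Br}\to U_{k',\Br'}$ subject to $r'_{\phi(i)}=r_i$, and the categorical limit), so you have reconstructed its argument, only with the two implicit points---the log-structure/root-stack dictionary and the absence of any twist along $D$ (built into Definition \ref{def:LogTwistedFMPair}, where the twist lives only on $\sM_S^{W/S}$ and $\sM^D$ is added untouched)---spelled out explicitly. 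One small imprecision: the twisting indices on the omitted factors do not ``become trivial'' under the open embedding $U_k\hookrightarrow U_{k'}$; rather, the corresponding divisors $\Delta_j$ for $j\notin\mathrm{Im}(\phi)$ simply do not meet the image of $U_k$, so the root stack along them is a no-op on that locus. This does not affect the conclusion, but the phrasing should match the geometry.
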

\begin{proof}
Denote $\Br=(r_{1},\cdots,r_{k})$ to be a tuple of $k$ positive integers. Now consider $U_{k,\Br}$, which is given by taking the $r_{i}$-th root stack along $\Delta_{i}$ in $U_{k}$ as above, see Section \ref{ss:RootLog} for taking root stacks. Then by \cite{LogStack} and Definition \ref{def:LogTwHalfAccod}, such $U_{k,\Br}$ has a universal family $\sX_{k,\Br}$ of log twisted half accordions with the log twisting index $r_{i}$ along the splitting divisor in $\sW_{k}$ that corresponding to $\Delta_{i}$. Similarly, we have a map $U_{k,\Br}\to U_{k',\Br'}$ induced by the increasing map $\phi: \{1,\cdots, k\}\to \{1,\cdots,k'\}$ with the condition $r'_{\phi(i)}=r_{i}$. Clearly $\sT^{tw}$ is the categorical limit of such $U_{k,\Br}$ with respect to the maps above. This proves what we want. 
\end{proof}

\subsection{Log twisted expanded degeneration}\label{ss:ExpandedDeg}
Consider $\pi: W \to B$, a flat, projective family of schemes over a smooth curve $B$. Let $0\in B$ be a closed point such that $\pi$ is smooth away from $W_{0}=W \times_{B}0$, and the central fiber $W_{0}$ is reducible with two smooth components $X_{1}$ and $X_{2}$ intersect transversally along the smooth divisor $D\subset W_{0}$. We view $D$ as a smooth divisor in $X_{i}$, and write $D_{i}\subset X_{i}$. So we have two smooth pairs $(X_{1},D_{1})$ and $(X_{2},D_{2})$. Denote by $N_{i}$ the normal bundle of $D_{i}$ in $X_{i}$ for $i=1,2$. Since $W_{0}$ is smoothable, we have $N_{1}\cong N_{2}^{\vee}$. 

Same as in the case of half accordions, we can glue $X_{1}[n]$ and $X_{2}$ along $D_{n}$ and $D_{2}$. Recall that $D_{n}$ is the divisor corresponding to the normal bundle $N_{1}$ in $\Proj_{n-1}\cong\Proj_{D}(\sO_{D}\oplus N_{1})$. We still use $D_{n}$ to denote the image of $D_{2}$ in the resulting gluing $W_{0}[n]$. We call $W_{0}[n]$ the {\em $n$-th accordion over $W_{0}$}. Note that we have a projection $W_{0}[n]\to W_{0}$ by contracting all $\Proj$ to the divisor $D$. We have the following definition from \cite{ACFW} and \cite{AF}:

\begin{defn}
A family of projective morphism $\sW\to S\times_{B} W$ over a reduced $B$-scheme $S$ is called an {\em expanded degeneration of $\pi$ over $S$} if:
\begin{enumerate}
 \item the morphism $\sW \to S$ is flat and proper;
 \item for every point $s\in S$ the fiber $\sW_{s}\to W_{p(s)}$ is an isomorphism if $p(s)\neq 0$, and is an accordion over $W_{0}$ if $p(s)=0$, where $p: S \to B$ is the structure morphism.
\end{enumerate}
Sometimes we use $\sW\to S$ to denote the expanded degeneration when there no confusion about the contraction. 
\end{defn}

\begin{rem}
Same as in Remark \ref{rem:CanLogForHalfAccd}, any family of expanded degenerations $\sW \to S$ give a family of log FM type spaces. 
\end{rem}

Following Section \ref{ss:LogStrTarget}, we have:

\begin{defn}
A {\em (un)extended log twisted expanded degeneration over $S$} is a (un)extended log twisted FM type space $(\sW \to S, \sM_{S}^{\sW/S}\to\sM_{S})$ as in Definition \ref{def:log-twist-FM}, where $\sW \to S$ is a family of expanded degeneration of $\pi$ over $S$. Denote by $\Br=(r_{1},\cdots,r_{m})$ the log twisting index along the corresponding singular locus of $\sW$. 
\end{defn}

\begin{rem}\label{rem:ConstructStackDeg}
Let $\fT^{u}$ be the stack parameterizing expanded degenerations of $\pi$. By \cite{ACFW} and \cite{AF}, it is a smooth algebraic stack which has the following description. 

For any positive integers $k$, consider the product morphism $\sA^{k}\to \sA$, and the morphism $B\to \sA$ given by the divisor $0\in B$. Denote by $\tilde{U}_{k}=\sA^{k}\times_{\sA}B$. It has a universal family of expanded degenerations of $\pi$, with the splitting singular divisors labelled by the set $\{1,\cdots,k\}$ such that the locus $\Delta_{l}$ where the $l$-th singular locus persists corresponds to $x_{l}=0$ in $\A^{k}$. Same as in Remark \ref{rem:ConstructStackPair}, the stack $\fT$ is the limit of such $\tilde{U}_{k}$ given by the map $\tilde{U}_{k}\to \tilde{U}_{k'}$ for any strictly increasing map $\{1,\cdots,k\}\to \{1,\cdots,k'\}$. The canonical log structure on the stack is given by the divisors $\Delta_{l}$ for all $l$.
\end{rem}

Denote by $\fT^{tw}$ and $\fT^{etw}$ the stack parameterizing log twisted and extended log twisted expanded degenerations of $\pi$. We have the following:

\begin{prop}\label{prop:TwExpansion}
\begin{enumerate}
\item The stack $\fT^{tw}$ is identical to the stack in \cite{AF} which parameterizing twisted expanded degeneration.
\item The stacks $\fT^{u}$ and $\fT^{tw}$ only depend on the base $(B,0)$.
\item The stacks $\fT^{u}$ and $\fT^{tw}$ are smooth, connected algebraic stacks of dimension $1$.
\item There is a natural open embedding $\fT^{u} \to \fT^{tw}$ and a natural right inverse $\fT^{tw}\to \fT$.
\end{enumerate}
\end{prop}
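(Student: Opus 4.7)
The plan is to mirror the proof of Proposition \ref{prop:LogTwistPair}, but starting from the atlas $\{\tilde{U}_{k}\}$ for $\fT^{u}$ described in Remark \ref{rem:ConstructStackDeg}. For each positive integer $k$ and each tuple $\Br=(r_{1},\cdots,r_{k})$ of positive integers, I would take $\tilde{U}_{k,\Br}$ to be the iterated root stack of $\tilde{U}_{k}$ along the smooth divisors $\Delta_{1},\cdots,\Delta_{k}$ of orders $r_{1},\cdots,r_{k}$ respectively. Pulling back the universal family $\tilde{\sW}_{k}$ and appealing to Section \ref{ss:RootLog} together with \cite{LogStack}, the resulting family over $\tilde{U}_{k,\Br}$ carries the canonical log structure with log twisting index $r_{i}$ along the $i$-th persistent splitting divisor, hence gives the universal log twisted expanded degeneration with those indices. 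A strictly increasing map $\{1,\cdots,k\}\to \{1,\cdots,k'\}$ with matching $\Br,\Br'$ yields an open embedding $\tilde{U}_{k,\Br}\to \tilde{U}_{k',\Br'}$, and $\fT^{tw}$ is the categorical limit of this system. Since the Abramovich--Fantechi stack is defined by the same limit, this proves (1).

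For (2), the only input of the construction is the map $B\to \sA$ classifying the divisor $0\in B$, and the stacks $\sA^{k}$ and their maps to $\sA$ are intrinsic; so $\fT^{u}$ and $\fT^{tw}$ depend only on $(B,0)$. For (3), observe that $\sA^{k}\to\sA$ is smooth (being a smooth morphism of smooth Artin stacks) and $B\to\sA$ is a strict morphism from a smooth curve, so $\tilde{U}_{k}=\sA^{k}\times_{\sA}B$ is smooth. A dimension count gives $\dim \tilde{U}_{k}=\dim B+\dim\sA^{k}-\dim\sA=1$. Since root stacks along smooth divisors preserve smoothness and dimension, each $\tilde{U}_{k,\Br}$ is smooth of dimension $1$, and both $\fT^{u}$ and $\fT^{tw}$ inherit this property. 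Connectedness follows from essential surjectivity of $\tilde{U}_{1}\to\fT^{tw}$ (any expanded degeneration deforms to one with a single persistent singular divisor by smoothing the others) together with the fact that $\tilde{U}_{1}\cong B$ is connected.

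For (4), the open embedding $\fT^{u}\to \fT^{tw}$ is given locally by the open embeddings $\tilde{U}_{k}\cong \tilde{U}_{k,(1,\cdots,1)}\hookrightarrow \tilde{U}_{k,\Br}$ obtained by specializing the twisting parameters to the trivial $\Br$; I would check these are compatible with the transition maps in both colimit diagrams so as to descend to a morphism of stacks. The right inverse $\fT^{tw}\to \fT^{u}$ is induced by the coarse-moduli maps $\tilde{U}_{k,\Br}\to \tilde{U}_{k}$, which forget the $\mu_{r_{i}}$-gerbe structure coming from the root stack construction and are manifestly compatible with the transition maps; the composition $\fT^{u}\to\fT^{tw}\to\fT^{u}$ is the identity by construction.

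The main obstacle I expect is not in any calculation but in the verification that the local constructions glue into a well-defined stack morphism across the colimit and agree with the Abramovich--Fantechi stack; in particular, one needs functoriality of the root stack construction in both the expansion order $k$ and the twisting data $\Br$, including under the transition maps that insert trivial $\Delta$'s. Once this bookkeeping is in place, parts (1)--(4) follow in parallel with the arguments already given for log twisted expanded pairs.
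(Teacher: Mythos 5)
Your overall strategy matches the paper's: the paper's own proof of this proposition simply says that part (1) follows exactly as in Proposition \ref{prop:LogTwistPair} by taking root stacks along the $\Delta_{l}$ (which is your argument), and that parts (2)--(4) follow from \cite{ACFW}. So for (1) you are faithfully reproducing the intended argument, and for (2)--(4) you are spelling out what the paper only cites. Two of those spelled-out arguments contain genuine errors.

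In (3), the claim that $\sA^{k}\to\sA$ is a smooth morphism is false, and the justification ``being a smooth morphism of smooth Artin stacks'' is circular. On atlases the map is $(x_{1},\ldots,x_{k})\mapsto x_{1}\cdots x_{k}$, which has vanishing Jacobian at the origin; indeed the fiber of $\sA^{k}\to\sA$ over $0\in\sA$ has a singular (normal crossings) atlas, so the map cannot be smooth. What saves the fiber product $\tilde{U}_{k}=\sA^{k}\times_{\sA}B$ is that the divisor $0\in B$ is reduced, making $B\to\sA$ transverse: choosing a local coordinate $t$ at $0\in B$, an atlas of $\tilde{U}_{k}$ near the worst point is $\{(x_{1},\ldots,x_{k},u)\in\A^{k}\times\G_{m}: t=u\,x_{1}\cdots x_{k}\}\cong\A^{k}\times\G_{m}$, quotiented by $\G_{m}^{k}$, which is smooth of dimension $1$. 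So your conclusion stands, but the reason must be this local computation (or a citation to \cite{ACFW}), not smoothness of $\sA^{k}\to\sA$. Similarly, ``essential surjectivity of $\tilde{U}_{1}\to\fT^{tw}$'' is not correct --- an expansion with $k>1$ persistent splitting divisors is not an object of $\tilde{U}_{1}$. The right observation is that every chart $\tilde{U}_{k,\Br}$ is connected and contains the common dense open piece lying over $B\setminus\{0\}$ (where the expansion is trivial), so all charts meet in a single connected open substack.

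In (4), the claimed open embeddings $\tilde{U}_{k}\cong\tilde{U}_{k,(1,\ldots,1)}\hookrightarrow\tilde{U}_{k,\Br}$ do not exist: for nontrivial $\Br$ the only natural morphism between $\tilde{U}_{k,\Br}$ and $\tilde{U}_{k}$ is the root-stack (coarse) projection $\tilde{U}_{k,\Br}\to\tilde{U}_{k}$, which goes in the other direction and is a non-representable finite-degree morphism, not an embedding; restricting to the complement of the twisted divisors gives only a proper open subset of $\tilde{U}_{k}$. The correct picture for the open embedding $\fT^{u}\to\fT^{tw}$ is that the charts $\tilde{U}_{k}=\tilde{U}_{k,(1,\ldots,1)}$ are themselves charts for $\fT^{tw}$, whose images cover precisely the open locus where the simple morphism $\sM_{S}^{\sW/S}\to\sM_{S}$ is an isomorphism (compare Remark \ref{rem:remove-free}). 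Your description of the right inverse $\fT^{tw}\to\fT^{u}$ via the coarse maps $\tilde{U}_{k,\Br}\to\tilde{U}_{k}$ is correct.
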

\begin{proof}
The proof of the first statement is identical to Proposition \ref{prop:LogTwistPair} by taking the root stack along divisors $\Delta_{l}$. The rest follows from \cite{ACFW}. 
\end{proof}

For later use, denote by $\sW^{etw}$, $\sW^{tw}$, and $\sW^{u}$ the universal family over $\fT^{etw}$, $\fT^{tw}$, and $\fT^{u}$ respectively.
 
\subsection{Splitting expanded degeneration}\label{ss:SplitExpDeg}
We have seen in Proposition \ref{prop:LogTwistPair} and \ref{prop:TwExpansion} that the stacks parameterizing log twisted expanded pairs and degenerations coincide with the stacks with the stack twisting in \cite{ACFW} and \cite{AF}. Next, we would like to gathering several results from \cite{ACFW} and \cite{AF} about the splitting of those stacks. However, we will state the results for our log twisted stacks. These results will be used later for the proof of degeneration formula. 

Denote by $\fT^{u}_{0} = \fT^{u}\times_{B}0$ and $\fT^{tw}_{0}=\fT^{tw}\times_{B}0$. It is clear that $\fT^{u}_{0}$ is a reduced normal crossing divisor in $\fT^{u}$, its inverse image in $\fT^{tw}$ is the non-reduced stack $\fT^{tw}_{0}=\sum_{r}r\fT^{r}_{0}$, where $\fT^{r}_{0}$ is the divisor corresponding to expansions having a splitting divisor with log twisting index $r$. Then we have the following:

\begin{prop}\cite[2.3.1]{AF}
\begin{enumerate}
 \item The stacks $\fT^{u}_{0}$ and $\fT^{tw}_{0}$ parametrize untwisted and log twisted expansions of the singular fiber $W_{0}$ respectively.
 \item These two stacks are independent of $W_{0}$.
\end{enumerate}
\end{prop}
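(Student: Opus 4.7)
The plan splits along the two assertions. For part (1), I would simply unwind the definitions. A point of $\fT^{u}_{0}=\fT^{u}\times_{B}0$ is an expanded degeneration $\sW\to S\times_{B}W$ together with a factorization of the structure map $p\colon S\to B$ through $0\hookrightarrow B$. Since then $p(s)=0$ for every $s\in S$, the defining condition of an expanded degeneration forces each fiber $\sW_{s}\to W_{0}$ to be an accordion over $W_{0}$, so $\fT^{u}_{0}$ is precisely the stack of expansions of $W_{0}$. The same reasoning applied to the definition in Section \ref{ss:ExpandedDeg}, combined with the description of log twisting in Definition \ref{def:log-twist-FM}, identifies $\fT^{tw}_{0}$ with the stack of log twisted expansions of $W_{0}$.

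For part (2), I would invoke the explicit \'etale presentation of $\fT^{u}$ from Remark \ref{rem:ConstructStackDeg}. There, $\fT^{u}$ is obtained as the categorical limit of the charts $\tilde{U}_{k}=\sA^{k}\times_{\sA}B$ along the open embeddings $\tilde{U}_{k}\to \tilde{U}_{k'}$ induced by strictly increasing maps $\{1,\dots,k\}\to\{1,\dots,k'\}$, with the map $B\to\sA$ coming from the divisor $0\in B$. Since fiber product commutes with this categorical limit, $\fT^{u}_{0}$ is the corresponding limit of the charts $\tilde{U}_{k}\times_{B}0$. The key observation is that $0\to B\to\sA$ factors through the origin of $\sA$, so that $\tilde{U}_{k}\times_{B}0\cong \sA^{k}\times_{\sA}0$. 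This last stack depends only on the multiplication $\sA^{k}\to\sA$ and the origin of $\sA$, purely combinatorial data having nothing to do with $W_{0}$. The transition maps between the charts are likewise intrinsic, so the entire colimit is independent of $W_{0}$.

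The log twisted case follows by combining this with the root stack construction used in the proof of Proposition \ref{prop:LogTwistPair} and \ref{prop:TwExpansion}(1): one takes the $r_{i}$-th root stacks of the charts $\tilde{U}_{k}\times_{B}0\cong \sA^{k}\times_{\sA}0$ along the divisors corresponding to persisting singular loci, and passes to the categorical limit over compatible tuples $\Br=(r_{1},\dots,r_{k})$. As in the untwisted case, nothing in this construction involves the geometry of $W_{0}$ beyond the combinatorics already captured by the charts, yielding independence of $\fT^{tw}_{0}$ from $W_{0}$.

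I do not expect a serious obstacle, as both statements amount to reading off known facts about the charts of $\fT^{u}$ and $\fT^{tw}$ from \cite{ACFW} and \cite{AF}; the only mild care required is to verify that the categorical limit in Remark \ref{rem:ConstructStackDeg} really does commute with the base change to $0\in B$ and with the root stack constructions, which is standard.
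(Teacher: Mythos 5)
The paper gives no proof of this proposition---it simply cites \cite[2.3.1]{AF}---so your proposal supplies an argument that the paper leaves implicit. It is correct and is essentially the argument one would expect, built on the chart description recalled in Remark \ref{rem:ConstructStackDeg}.

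A small point worth flagging for precision: the paper's phrase ``limit in the categorical sense'' for the gluing of the charts $\tilde U_{k}$ is loosely worded. The transition maps $\tilde U_{k}\to\tilde U_{k'}$ coming from strictly increasing maps $\{1,\dots,k\}\to\{1,\dots,k'\}$ are open embeddings, so what is really happening is the gluing of an \'etale open cover; it is this gluing (not a genuine limit in the diagrammatic sense) that commutes with the base change $(-)\times_{B}0$ and, chart by chart, with taking root stacks. Once that is understood, your identification $\tilde U_{k}\times_{B}0\cong\sA^{k}\times_{\sA}0$---using that the composition $0\hookrightarrow B\to\sA$ factors through the origin of $\sA$, since the map $B\to\sA$ is the one cut out by the divisor $0\in B$---is exactly the observation needed for part (2). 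Both the charts $\sA^{k}\times_{\sA}0$ and the transition maps between them depend only on the multiplication map $\sA^{k}\to\sA$ and its origin, not on $W_{0}$, and the log twisted version follows from the same root-stack argument as in Proposition \ref{prop:LogTwistPair}, applied after restricting to the fiber over $0$. Your treatment of part (1) by unwinding the definition of an expanded degeneration over a $B$-scheme factoring through $0$ is likewise correct; the only implicit step is that over such $S$ one has $S\times_{B}W\cong S\times W_{0}$, so that the structure map becomes a map to $S\times W_{0}$ as required.
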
 

Denote by $\fT^{r,spl}_{0}$ the stack of log twisted accordions with a choice of a splitting divisor $D_{i}$ with log twisting index $r$. We can similarly define $\fT^{u,spl}_{0}$ in the untwisted case. We have a natural map $\fT^{r,spl}_{0}\to \fT^{tw}_{0}$ and $\fT^{u,spl}\to\fT^{u}_{0}$. 

Consider the stack $\fQ=\fT^{u,spl}_{0}\times_{\fT^{u}_{0}}\fT^{tw}_{0}$. As shown in \cite{AF}, it has a decomposition of disjoint union 
\begin{equation}\label{equ:split-target}
\fQ=\coprod_{r} \fQ_{r},
\end{equation} 
where over the reduction of $\fQ_{r}$, the splitting divisor in the universal family has log twisting index $r$.

\begin{lem}\label{lem:RedSplitting}\cite[2.4.1]{AF} 
The morphism $\fT^{r,spl}_{0}\to \fQ_{r}$ is of degree $1/r$.
\end{lem}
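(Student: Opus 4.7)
My plan is to verify the degree statement locally, using the explicit charts for $\fT^{u}$ and $\fT^{tw}$ from Remark~\ref{rem:ConstructStackDeg} together with the description of log twisting as root stacks in Proposition~\ref{prop:TwExpansion}. Since both $\fT^{r,spl}_{0}$ and $\fQ_{r}$ are $0$-dimensional and the structural morphism is representable, the degree can be computed étale-locally, so it suffices to work near a generic point of $\fT^{r}_{0}$.

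First I would fix a geometric point of $\fT^{r}_{0}$ whose corresponding accordion has a single splitting divisor with log twisting index $r$. In the local chart $\tilde{U}_{1} = B$ around its image in $\fT^{u}$, the divisor $\fT^{u}_{0}$ is the reduced point $\{0\}$, and $\fT^{tw}$ is modeled by the $r$-th root stack $[\A^{1}/\mu_{r}] \to \A^{1}$, $y \mapsto y^{r}$. The pullback of $\{0\}$ is the non-reduced divisor $(y^{r} = 0) \subset [\A^{1}/\mu_{r}]$, confirming the decomposition $\fT^{tw}_{0} = \sum_{r} r\fT^{r}_{0}$ and identifying $\fT^{r}_{0}$ locally with $B\mu_{r}$.

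Second, because the accordion at the chosen generic point has a unique splitting divisor, the étale morphism $\fT^{u,spl}_{0} \to \fT^{u}_{0}$ is an isomorphism in a neighborhood. The fiber product $\fQ$ is therefore locally isomorphic to $\fT^{tw}_{0}$, and its $r$-component $\fQ_{r}$ is locally the non-reduced quotient stack $[\Spec k[y]/(y^{r}) \,/\, \mu_{r}]$, whose underlying reduced substack is precisely $\fT^{r,spl}_{0} = B\mu_{r}$. The morphism $\fT^{r,spl}_{0} \to \fQ_{r}$ is then locally the closed immersion of the reduction into its $r$-fold thickening, yielding $[\fQ_{r}] = r \cdot [\fT^{r,spl}_{0}]$ on fundamental cycles and hence the desired degree $1/r$.

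The main subtlety is to ensure this local picture globalizes uniformly: at non-generic points of $\fT^{r}_{0}$ the accordion may carry several splitting divisors with various twisting indices, so the relevant chart is $\tilde{U}_{k,\Br}$ for some multi-index $\Br$ containing $r$. However, the multiplicity contributed by the chosen splitting divisor is independent of the other entries of $\Br$, since the root stack construction along each $\Delta_{i}$ in $\tilde{U}_{k}$ is carried out independently. The étale base change $\fT^{u,spl}_{0} \to \fT^{u}_{0}$ locally picks one of the $k$ splitting divisors and preserves the multiplicity-$r$ structure along it, so the local verification propagates to all of $\fT^{r,spl}_{0}$.
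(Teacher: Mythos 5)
Your argument is correct, but note that the paper itself does not prove this lemma --- it is stated with only a citation to \cite[2.4.1]{AF}, and the paper relegates the relevant local picture to the later Remark \ref{rem:LineBundleGlue}. Your computation in the root-stack chart agrees with that remark: in a local chart $U=(\tilde{U}_{k})_{\DD,\Br}$ for $\fT^{tw}$, the stack $\fT^{u,spl}_{0}$ is the divisor $\Delta_{l+1}$, and after taking the $r$-th root stack $U_{\Delta_{l+1},r}$, the stack $\fT^{r,spl}_{0}$ is the root divisor $\sD$ while $\fQ_{r}$ is its $r$-fold thickening $r\sD$, which gives $(t_r)_*[\fT^{r,spl}_{0}]=\tfrac{1}{r}[\fQ_{r}]$ exactly as you say. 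Two small points of precision: the morphism $\fT^{u,spl}_{0}\to\fT^{u}_{0}$ is the normalization (of pure degree $1$ in the sense of Costello), not globally \'etale; it happens to be an isomorphism near a generic point of the $r$-branch, which is all your argument uses, so this does not affect the proof, but the terminology should be corrected. Also, you assert without justification that locally $\fT^{r,spl}_{0}=B\mu_{r}$ is the reduction of $\fQ_{r}$; this is exactly what the paper's Remark \ref{rem:LineBundleGlue} establishes by identifying $\fT^{r,spl}_{0}$ with the root divisor $\sD$, so it would be cleaner to invoke or reproduce that identification explicitly rather than treating it as obvious.
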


Consider the universal family $\sW^{r,spl}$ over $\fT^{r,spl}_{0}$. If we normalize along the splitting divisor, we have two family of log twisted expanded pairs. Conversely, we have:
\begin{lem}\label{lem:GlueHalfAcc}\cite[2.4.2]{AF}
The natural morphism $\fT^{r,spl}_{0}\to \sT^{tw}\times \sT^{tw}$ corresponding to the two components of the partial normalization of the universal family is a gerbe banded by $\mu_{r}$; in particular it has degree $1/r$.
\end{lem}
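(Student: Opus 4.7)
The plan is to verify the $\mu_r$-gerbe structure \'etale locally, using the presentations of $\sT^{tw}$ and $\fT^{tw}$ given in Remark \ref{rem:ConstructStackPair}, Remark \ref{rem:ConstructStackDeg}, and Proposition \ref{prop:LogTwistPair}. First, I would fix a geometric point of $\fT^{r,spl}_0$ corresponding to an accordion $W_0[n]$ with the chosen splitting being the $i$-th one, carrying twisting index $r$, and with the other splittings carrying twistings $\Br_1=(r_1,\ldots,r_{i-1})$ and $\Br_2=(r_{i+1},\ldots,r_n)$. Near this point, the untwisted chart on $\fT^u$ is $\tilde U_n=\sA^n\times_\sA B$, with the $n$ splitting divisors corresponding to the coordinate hyperplanes $\Delta_j=\{s_j=0\}$; after the root stack construction yielding $\tilde U_{n,\Br}$ with $\Br=(\Br_1,r,\Br_2)$, the locus of $\fT^{r,spl}_0$ is identified with the substack $\{\sigma_i=0\}$, where $\sigma_i$ is the tautological section of the universal $r$-th root line bundle over $\Delta_i$.

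Second, I would work out the dimension bookkeeping. In the untwisted case, the locus $s_i=0$ in $\tilde U_n$ (which automatically forces $t=0$ via the relation $\prod s_j=t$) is identified with $\sA^{n_1-1}\times\sA^{n_2-1}$ upon quotienting out the diagonal relation $\prod g_j=1$ (eliminating one $\G_m$ factor), where $n_1=i$ and $n_2=n-i+1$; this recovers the expected identification $\fT^{u,spl}_0\cong\sT^u\times\sT^u$ consistent with Lemma \ref{lem:RedSplitting} at $r=1$. For general $r$, replacing the middle copy of $\sA$ by its $r$-th root stack and then restricting to $\sigma_i=0$ introduces a residual $\mu_r$ stackiness over the untwisted chart. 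Explicitly, via the identification $\sqrt[r]{B\G_m/\sA}\cong \sA$ using the $r$-th power map $(M,t)\mapsto (M^{\otimes r},t^r)$, restriction to the zero section reduces to the map $B\G_m\to B\G_m$ given by $M\mapsto M^{\otimes r}$, whose kernel is precisely $\mu_r$, yielding a $\mu_r$-gerbe over the chart $\sA^{n_1-1}_{\Br_1}\times\sA^{n_2-1}_{\Br_2}$ of $\sT^{tw}\times\sT^{tw}$.

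Third, I would verify that the $\mu_r$-gerbe is banded. The automorphisms of a glued accordion that act trivially on the two normalized half-accordions are precisely those that modify the log-structure gluing at the distinguished singular divisor; with twisting index $r$, this amounts to multiplying the chosen $r$-th root of the log gluing section by an $r$-th root of unity, giving a canonical $\mu_r$-action. This canonical action is compatible across all local charts, being intrinsically determined by the root-stack construction (independent of any choice of coordinates). The degree $1/r$ assertion follows immediately since a banded $\mu_r$-gerbe has generic automorphism group $\mu_r$ of order $r$.

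The main obstacle I anticipate is verifying that the gerbe band is precisely $\mu_r$ (rather than an outer-automorphism twist of it) and that the local identifications glue coherently across charts. Concretely, one must check that the $\mu_r$-action on the fiber automorphisms is induced canonically by the tautological $\mu_r$-symmetry of the root-stack construction, using the explicit log-structure description at distinguished singular loci from Remark \ref{rem:log-special}; once this verification is in place, the $\mu_r$-gerbe structure and its banding follow at once.
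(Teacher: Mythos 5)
Your proof is correct, and it rests on the same underlying mechanism as the paper's: the $\mu_r$ arises as the automorphisms of an $r$-th root that fix the object being rooted. The main difference is one of language and level of explicitness. The paper's proof is terse and intrinsic: it works directly with the log structure at the splitting locus, writing $\N^2 \to \sM_\sW$ with generators $a,b$ for the two branches, adjoining the $r$-th root $e$ of $a+b$, and observing that $\mu_r$ acts on $e$ while fixing $a,b$ (hence fixing the two normalized halves). Your proof instead works through the root-stack charts $\tilde U_{n,\Br}$ of $\fT^{tw}$, using the identification (via the $r$-th power map on $\sA$) of the $r$-th root stack of $\sA$ along its origin with $\sA$, and then restricting to the special divisor to exhibit the map $B\G_m \to B\G_m$, $M \mapsto M^{\otimes r}$, whose kernel is $\mu_r$. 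Given Proposition~\ref{prop:LogTwistPair}, which establishes the dictionary between log twisting and root stacks, these are genuinely the same computation in two vocabularies; what your version buys is an explicit chart-level gerbe structure, at the cost of some extra bookkeeping (and indeed your dimension-count paragraph identifying $\Delta_i$ with $\sA^{n_1-1}\times\sA^{n_2-1}$ glosses over the residual $B\G_m$ factor sitting in $\Delta_i$, which is in fact precisely where the gerbe lives; this should be tightened). Your banding verification in the third paragraph — identifying the $\mu_r$-action as multiplication on the chosen root of the gluing section, independent of coordinates — matches the paper's ``without changing $a$ and $b$'' observation and is exactly what makes the gerbe banded rather than merely étale-locally trivial.
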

\begin{proof}
We give a proof in our log case. If we glue the resulting two components, locally along the splitting divisor $D$, it has a canonical log structure given by $\N^{2}\to \sM_{\sW}$, where we denote $a$ and $b$ to be the two standard generators of $\N^{2}$. In order to put log twisting, we take the $r$-th root $e$ of $a+b$. Then the new log structure is generated by $e,a$ and $b$ with the only relation $re=a+b$. Clearly, there is a $\mu_{r}$ action on the new log structure by the multiplication on $e$ but without changing $a$ and $b$. 
\end{proof}

\begin{rem}\label{rem:LineBundleGlue}
By the above lemma, we have the canonical isomorphism $\fT^{u,spl}_{0}\cong \sT^{tw}\times\sT^{tw}$. In fact the universal family $\sW^{u,spl}$ over $\fT^{u,spl}_{0}$ can be obtained by gluing the two universal families over each copy of $\sT^{u}$, and then put the standard log structure along the splitting divisor.

Now we compare $\fT^{r,spl}_{0}$ and $\fT^{u,spl}_{0}$. As explained in \cite{AF} that $\fT^{u,spl}_{0}$ is a smooth divisor in $\fT^{tw}$. We use the notations in Remark \ref{rem:ConstructStackDeg}. Consider $\DD=(\Delta_{1},\cdots,\Delta_{l})$, and $\Br=({r_{1},\cdots,r_{l}})$. By Proposition \ref{prop:TwExpansion}(1) and Section \ref{sss:LogRoot}, we may assume that locally we have a chart $(\tilde{U}_{k})_{\DD,\Br}$ for $\fT^{tw}$. Then $\fT^{u,spl}_{0}$ has a chart given by the image of $\Delta_{l+1}$ in $(\tilde{U}_{k})_{\DD,\Br}$. Here, we assume that the splitting divisor is indexed by $l+1$. 

Still use $\Delta_{l+1}$ to denote this divisor in $U:=(\tilde{U}_{k})_{\DD,\Br}$. Consider the root stack $U_{\Delta_{l+1},r}$, and the special divisor $\sD$ which is the $r$-th root of $\Delta_{l+1}$ as in subsection \ref{ss:RootLog}. Then by the definition of $\fT^{r,spl}_{0}$, we have a chart $U_{\Delta_{l+1},r}$ for $\fT^{tw}$, in which the stack $\fT^{r,spl}_{0}$ is given by the divisor $\sD$. Locally the map $\fT^{r,spl}_{0}\to\fT^{u,spl}_{0}$ is induced by $U_{\Delta_{l+1},r}\to U$.
\end{rem}

\section{Relative and Degenerate Gromov-Witten invariants}\label{sec:GW}
In this section, we introduce the Gromov-Witten invariants with the target given by the extended log twisted expanded pairs or degenerations.

\subsection{Gromov-Witten invariants for expanded degenerations}\label{ss:GWDeg}
We use the notations as in Section \ref{ss:ExpandedDeg}. Let $\beta$ be a curve class in the fiber of $\pi:W\to B$. Denote by $g\geq 0$ the genus, and $n$ the number of marked points. Consider the singular fiber $W_{0}$, and the stack $\fT^{etw}_{0}$ parameterizing the expanded degenerations of $W_{0}$ with extended log twisting. Denote by $\sK^{log}_{g,n}(W_{0},\beta)$ the stack parameterizing log stable maps to the extended log twisted expansions of $W_{0}$. Let $\sK_{g,n}(W_{0},\beta)$ be the stack parameterizing the corresponding stable predeformable maps as in Remark \ref{rem:predeformable}. It was shown in \cite{Jun1} and \cite{AF} that $\sK_{g,n}(W_{0},\beta)$ is a proper DM-stack. By Theorem \ref{thm:LogStackForMap}, the stack $\sK^{log}_{g,n}(W_{0},\beta)$ is also a proper DM-stack.

Denote by $\sC$ the universal curve over $\sK^{log}_{g,n}(W_{0},\beta)$, and $\{\Sigma_{i}\}_{i=1}^{n}$ the $n$ universal sections. Note that we have a projection $\sW_{0}^{etw} \to W$ from the universal family of expansions. Consider the following universal diagram:
\[
\xymatrix{
\sC \ar[r]^{f} \ar[d] & \sW^{etw}_{0} \ar[rr] && W_{0} \\
\sK^{log}_{g,n}(W_{0},\beta).
}
\]
The composition of the top arrows gives a pre-stable map. We take its stabilization:
\begin{equation}\label{diag:contract-family}
\xymatrix{
C \ar[r]^{f} \ar[d] &  W_{0} \\
\sK^{log}_{g,n}(W_{0},\beta).
}
\end{equation}
The section $\Sigma_{i}$ induces an evaluation morphism of usual stacks:
\[\Bev_{i}: \sK^{log}_{g,n}(W_{0},\beta) \to W_{0}.\]
Consider the $i$-th descendant class of the stabilized curve:
\[\psi_{i}=c_{1}(\Sigma^{*}_{i}\omega_{C/\sK^{log}_{g,n}(W_{0},\beta)}).\]

\begin{defn}
Given positive integer $m_{i}$ and cohomology class $\gamma_{i}\in H^{*}(W_{0})$ for each $i=1,\cdots, n$, we define the Gromov-Witten invariant with gravitational descendants by:
\[\left\langle \prod_{i=1}^{n}{\tau_{m_{i}}(\gamma_{i})}\right\rangle_{g,n,\beta}^{W_{0}}:= \mbox{deg}\left(\prod_{i=1}^{n}{(\psi_{i}^{m_{i}})\cdot\Bev_{i}(\gamma_{i})}\cap [\sK^{log}_{g,n}(W_{0},\beta)]\right),\]
where the virtual fundamental class $[\sK^{log}_{g,n}(W_{0},\beta)]$ is constructed in Section \ref{ss:ConstructVir}.
\end{defn}

\begin{rem}
Note that the stack $\sK^{log}_{g,n}(W_{0},\beta)$ and the Gromov-Witten invariant does not depent on a global smoothing $W\to B$ of $W_{0}$. We only need that $W_{0}$ to be a log FM type space over a point. 
\end{rem}

Similarly, we have a stack $\sK_{g,n}(W/B,\beta)$ parameterizing stable predeformable maps to the expanded degeneration of $W\to B$. Denote by $\sK_{g,n}^{log}(W/B,\beta)$ the stack parameterizing log stable maps to the extended log twisted expanded degeneration of $W\to B$. It was shown in \cite{Jun1} and \cite{AF} that $\sK_{g,n}(W/B,\beta)$ is a proper DM-stack. Furthermore it behaves well under base change. Therefore, by Remark \ref{rem:BaseChange} the stack $\sK_{g,n}^{log}(W/B,\beta)$ is also well-behaved under base change. It is not hard to see that 
\[
\sK_{g,n}^{log}(W_{0},\beta)=\sK_{g,n}^{log}(W/B,\beta)\times_{B}0.
\]

\begin{prop}
The Gromov-Witten invariants of $W_{b}=W\times_{B}b$ is independent of $b$.
\end{prop}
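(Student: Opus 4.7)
The plan is to carry out the classical deformation-invariance argument for Gromov--Witten invariants, now formulated in our logarithmic setting by means of the base-change compatibility of the virtual fundamental class established in Section \ref{sec:fundamental-class}.

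The first step is to work with the total relative moduli stack $\sK := \sK^{log}_{g,n}(W/B,\beta)$ and its structure morphism $p : \sK \lra B$. Since $\sK_{g,n}(W/B,\beta)$ is proper DM and well-behaved under base change (as recalled in Section \ref{ss:GWDeg}), Remark \ref{rem:BaseChange} gives the same for $\sK$. In particular, for each closed point $b \in B$ there is a cartesian square
\[
\xymatrix{
\sK^{log}_{g,n}(W_b,\beta) \ar[r] \ar[d] & \sK \ar[d]^p \\
\Spec k(b) \ar[r]^{\quad i_b} & B,
}
\]
and by the corollary following Proposition \ref{prop:PerObsBaseChange} we have $[\sK^{log}_{g,n}(W_b,\beta)] = i_b^![\sK]$.

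The second step is to globalize the integrand. The universal sections of the universal stabilized curve over $\sK$ yield evaluation morphisms $\Bev_i : \sK \lra W$, and the descendant classes $\psi_i$ are defined on $\sK$ itself, both restricting under $i_b$ to the classes used in the definition of the fiber invariants. Since the question of deformation invariance is only meaningful for classes that extend across $B$, I fix lifts $\tilde\gamma_i \in H^*(W)$ of the given $\gamma_i$ and form
\[ \alpha := \prod_{i=1}^n \psi_i^{m_i} \cdot \Bev_i^*(\tilde\gamma_i) \cap [\sK]. \]
A dimension count shows that the relative virtual dimension of $\sK$ over $B$ agrees with the absolute virtual dimension of each fiber, so $\alpha \in A_1(\sK)$ and the proper pushforward gives $p_*\alpha \in A_1(B)$. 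Since $B$ is a smooth connected curve, $A_1(B) = \Z\cdot [B]$, and hence $p_*\alpha = N\cdot [B]$ for a single integer $N$ independent of $b$.

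To conclude, I apply $i_b^!$ to $p_*\alpha$; compatibility of refined Gysin pullback with proper pushforward, combined with the compatibility of $\Bev_i$ and $\psi_i$ with base change and the identification $[\sK_b] = i_b^![\sK]$, shows that $i_b^! p_* \alpha$ is the zero-cycle $\prod \psi_i^{m_i} \cdot \Bev_i^*(\gamma_i) \cap [\sK^{log}_{g,n}(W_b,\beta)]$, whose degree is exactly $\left\langle\prod \tau_{m_i}(\gamma_i)\right\rangle^{W_b}_{g,n,\beta}$. This degree therefore equals $N$ for every $b$, which is the claim. The main obstacle I expect is not formal but careful bookkeeping: verifying the stacky, virtual commutation $i_b^!\circ p_* = p_*\circ i_b^!$ and the correct relative virtual dimension in our log-twisted setting. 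Both reduce, in the end, to the base-change statement for the obstruction theory already at our disposal in Proposition \ref{prop:PerObsBaseChange}.
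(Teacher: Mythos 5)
Your proposal is correct and follows essentially the same route as the paper: the paper's proof consists of observing that the relative obstruction theory for $\sK^{log}_{g,n}(W/B,\beta)$ over $\fT^{tw}$ restricts to that of each fiber and then citing Behrend--Fantechi [7.2(2)] for deformation invariance, whereas you unpack precisely that axiom by hand (globalizing the integrand over $B$, using $A_1(B)\cong\Z[B]$, and invoking the base-change compatibility $[\sK_b]=i_b^![\sK]$ from Proposition \ref{prop:PerObsBaseChange} together with $i_b^!\circ p_* = p_*\circ i_b^!$). The only point worth flagging is your (correct) insistence on choosing lifts $\tilde\gamma_i\in H^*(W)$ of the fiber classes --- the paper's statement leaves this implicit, and without it the comparison between fibers is not literally meaningful.
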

\begin{proof}
Since the perfect obstruction theory for $\sK_{g,n}^{log}(W/B,\beta)$ relative to $\fT^{tw}$ restrict to that of $\sK_{g,n}^{log}(W_{0},\beta)$, the statement follows from \cite[7.2(2)]{Behrend-Fantechi}. 
\end{proof}

\subsection{Relative Gromov-Witten invariants}
We fix the smooth pair $(X,D)$ as in the beginning of Section \ref{ss:ExpPair}, and the data $\Xi$ as in Definition \ref{def:WeightGraph}. Denote by $\sK_{\Xi}^{log}(X,D)$ the stack parameterizing $\Xi$-relative log stable maps to the extended log twisted expanded pairs. Let $\sK_{\Xi}(X,D)$ be the stack parameterizing the corresponding underlying stable predeformable maps. Again by \cite{Jun1} and \cite{AF}, this is a proper DM-stacks. Hence $\sK_{\xi}^{log}(X,D)$ is also a proper DM-stack. Similarly, we can define evaluation maps $\Bev_{i}: \sK_{\Xi}^{log}(X,D)\to X$ of the underlying stacks, and the descendant classes $\psi_{i}$ for $i\in N$ as in Section \ref{ss:GWDeg}. 

\begin{defn}
Given nonnegative integers $m_{i}$ for $i\in L(\Xi)$, cohomology classes $\gamma_{i}\in H^{*}(X)$ for $i\in l(\Xi)$, and $\gamma_{j}\in H^{*}(D)$ for $j\in R(\Xi)$, we define relative Gromov-Witten invariants with gravitational descendants by:
\[\left\langle \prod_{i\in L(\Xi)}{\tau_{m_{i}}(\gamma_{i})}|\prod_{_{j\in R(\Xi)}}\gamma_{j}\right\rangle_{\Xi}^{(X,D)}:= deg\left(\left(\prod_{i\in L(\Xi)}{\psi_{i}^{m_{i}}\cdot\Bev_{i}^{*}\gamma_{i}}\right)\cdot \left(\prod_{j\in R(\Xi)}{\Bev_{j}^{*}\gamma_{j}}\right)\cap [\sK_{\Xi}^{log}(X,D)]\right).\]
\end{defn}

\begin{rem}
Note that the evaluation map $\Bev_{i}$ defined in this section is just usual morphism without log structures.
\end{rem}

\section{Degeneration formula}\label{sec:degeneration-formula}

\subsection{Statement of the degeneration formula}
Following the method in \cite{AF}, we prove the degeneration formula in the logarithmic setting. We first fix the following notations.

Consider the log FM type space $W_{0}=X_{1}\coprod_{D}X_{2}$ and two smooth pairs $(X_{1},D_{1})$ and $(X_{2},D_{2})$ as in Section \ref{ss:ExpandedDeg}. Denote by $H$, $H_{1}$ and $H_{2}$ the monoids of curve classes in $W_{0}$, $X_{1}$, and $X_{2}$ respectively. Fix the data $\Gamma=(g,N,\beta)$, where $g\geq 0$ is the genus, $N=\{1,\cdots,n\}$ is the index set for marked points, and $\beta\in H$ is the curve class. We would like to view $\Gamma$ as a weighted graph having one vertex with genus $g$, curve class $\beta$, and legs labelled by $N$. For simplicity, we use $\sK$ to denote the stack $\sK^{log}_{g,n}(W_{0},\beta)$. 

For smooth pairs we introduce the following data:

\begin{defn}
A splitting $\eta$ of $\Gamma$ is an ordered pair $\eta=(\Xi_{1},\Xi_{2})$ where
\begin{enumerate}
 \item $\Xi_{1}$ and $\Xi_{2}$ are admissible weighted graphs as in Definition \ref{def:WeightGraph}.
 \item The labeling of legs $L(\Xi_{1})\cup L(\Xi_{2})=N$ gives a partition of $N$.
 \item The labeling of roots $R(\Xi_{1})\longleftrightarrow M\longleftrightarrow R(\Xi_{2})$ is ordered by the set $M$ disjoint from $N$.
 \item For elements $r\in R(\Xi_{1})$ and $r'\in R(\Xi_{2})$ that correspond to the same element $j\in M$, we associate the same tangency multiplicity $c_{j}$ for both $r$ and $r'$.
 \item For each vertex $v\in V(\Xi_{i})$, we assign the genus $g(v)$ and a curve class $\beta(v)\in H_{i}$.
\end{enumerate}
The above data must satisfy the following conditions:
\begin{enumerate}
 \item By Gluing $\Xi_{1}$ and $\Xi_{2}$ along the roots labeled by $M$, we obtain a graph $\Gamma$ of genus $g$, and total weight $\beta$.
 \item For each vertex $v\in V_{\Xi_{i}}$, we have \[\sum_{j\in R_{v}}(c_{j})=(\beta(v)\cdot D_{i})_{X_{i}}, \mbox{\ \ \ for $i=1,2$.}\]
\end{enumerate}
\end{defn}

The last condition in the above definition implies that $(\beta(\Xi_{1})\cdot D_{1})_{X_{1}}=(\beta(\Xi_{2})\cdot D_{2})_{X_{2}}$, where $\beta(\Xi_{i})$ is the total weight of $\Xi_{i}$. For each $\eta$, there is a special number \[r(\eta)=l.c.m(c_{j})_{j\in M},\] which will be the log twisting index along the splitting divisor as in Remark \ref{rem:LogAtSing}. We call it the {\em log twisting index of $\eta$}.

Denote by $\Omega(\Gamma)$ the set of all splitting classes of $\Gamma$. Note that we have the symmetric group $S(M)$ acting on $\Omega$ by its action on the index set $M$ of the roots. 

\begin{defn}
Two splittings are said to be equivalent if they belong to the same $S(M)$-orbit. Denote by $\bar{\Omega}$ the set of equivalence classes, and $\bar{\eta}$ the equivalence class of $\eta\in \Omega$.
\end{defn}

Denote by $F$ a homogeneous basis of $H^{*}(D)$. For each $\delta \in F$, denote $\delta^{\vee}$ to be the dual element in the dual basis with respect to the pairing $\int_{D}{\delta\cdot\delta^{\vee}}=1$. 

\begin{thm}\label{thm:main}
For any non-negative integers $m_{i}$, and cohomology classes $\gamma_{i}\in H^{*}(W_{0})$, where $i\in N_{1}\cup N_{2}$, the following degeneration formula holds:
\begin{align*}
\left\langle \prod_{i=1}^{n}{\tau_{m_{i}}(\gamma_{i})}\right\rangle_{g,n,\beta}^{W_{0}} &= \sum_{\eta\in \Omega}\frac{\prod_{j\in M}c_{j}}{|M|!}\sum_{\delta_{j}\in F}(-1)^{\epsilon}\left\langle\prod_{i\in N_{1}}\tau_{m_{i}}(\gamma_{i})|\prod_{j\in M}\delta_{j}\right\rangle_{\Xi_{1}}^{X_{1},D}\\
& \cdot \left\langle\prod_{i\in N_{2}}\tau_{m_{i}}(\gamma_{i})|\prod_{j\in M}\delta_{j}^{\vee}\right\rangle_{\Xi_{2}}^{X_{2},D}
\end{align*}
where the sign $(-1)^{\epsilon}$ satisfies the following:
\[\prod_{i\in N}\gamma_{i}\cdot \prod_{j\in M}\delta_{j}\delta_{j}^{\vee}=(-1)^{\epsilon}\prod_{i\in N_{1}}\gamma_{i}\prod_{i\in M}\delta_{i}\prod_{i\in N_{2}}\delta_{i}\prod_{i\in M}\delta_{i}^{\vee}.\]
\end{thm}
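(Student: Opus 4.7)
The proof follows the strategy of Abramovich--Fantechi \cite{AF} adapted to the logarithmic setting. The plan is (i) reduce to the central fiber $W_0$ by deformation invariance, (ii) decompose $[\sK]$ with $\sK = \sK^{log}_{g,n}(W_0,\beta)$ according to splitting data $\eta\in\Omega$ by pulling back along the splitting stacks of targets from Section \ref{ss:SplitExpDeg}, and (iii) identify each splitting summand with a fibered product of the two relative moduli stacks $\sK^{log}_{\Xi_1}(X_1,D)$ and $\sK^{log}_{\Xi_2}(X_2,D)$, matched by the evaluation diagonals. The descendants and cohomology insertions play no role in the combinatorics of the gluing and just distribute to the two sides via their disjoint support.

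First I would use the base change of the perfect obstruction theory along $\sK^{log}_{g,n}(W/B,\beta)\to \fT^{tw}$ (Proposition \ref{prop:PerObsBaseChange}) together with the proposition at the end of Section \ref{ss:GWDeg} to reduce the left-hand side to an integral against $[\sK]$. Next, using $\fT^{tw}_0 = \sum_r r\,\fT^r_0$ and Lemma \ref{lem:RedSplitting}, I would pull back $[\sK]$ along the degree-$1/r$ maps $\fT^{r,spl}_0 \to \fT^r_0$ that record a choice of splitting divisor; the extra factor $r$ is absorbed by Lemma \ref{lem:RedSplitting}. On $\fT^{r,spl}_0$, Lemma \ref{lem:GlueHalfAcc} identifies the partial normalization of the universal target with two families over $\sT^{tw}\times\sT^{tw}$, so maps to it decompose over the discrete data of the curve: the combinatorial type (genus, class, tangency orders) of each component, matched at the splitting divisor. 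This produces a decomposition $\bigsqcup_{\eta:\, r(\eta)=r}\sK^\eta$ of the pullback, and $\bigsqcup_{\bar\eta}\sK^{\bar\eta}$ after $S(M)$-symmetrization.

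The main obstacle is the gluing step: the fibered product of relative log stable maps is not itself the stack $\sK^{\bar\eta}$, because gluing of log structures is not a pushout. Using the local description of Remark \ref{rem:LogAtSing}, at each glued marking with contact order $c_j$ the glued log structure on the curve is the amalgamation $\N^2\oplus_\N\N$ with $\N$-hook $e\mapsto l_j e'$ where $l_j = r/c_j$, and similarly on the target with hook $e\mapsto r\cdot e'$, so gluing together the two sides amounts to a $\mu_r$-gerbe on the underlying fibered product; the $\mu_r$ already matches the gerbe of Lemma \ref{lem:GlueHalfAcc}. Restricting the virtual class of $\sK^{\bar\eta}$ to this gerbe and comparing with the product of relative virtual classes via the deformation sequences of Section \ref{ss:ConstructVir} yields a factor $\prod_{j\in M} c_j$, coming from the relation $r e' = c_j a + c_j b$ at each of the $|M|$ glued nodes (equivalently, from the torsor structure relating the line bundles that parametrize log enhancements of a fixed underlying glued map). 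Combining this with the degree $1/r$ from Lemma \ref{lem:RedSplitting} and the $r$ from $\fT^{tw}_0 = \sum r\,\fT^r_0$, one obtains the coefficient $\prod_{j\in M} c_j$ per ordered splitting.

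Finally, I would match evaluations at the glued markings via the Kunneth expansion
\[
[\Delta_{D^M}] \;=\; \sum_{\{\delta_j\}\subset F}\ \prod_{j\in M}\bigl(\delta_j\otimes \delta_j^\vee\bigr),
\]
which splits the virtual integral over the fibered product into a product of two relative Gromov--Witten integrals with insertions $\delta_j$ and $\delta_j^\vee$ respectively; reordering the cohomology classes to the standard order produces the sign $(-1)^\epsilon$. Passing from ordered splittings $\eta$ to equivalence classes $\bar\eta$ absorbs the $1/|M|!$, since each class contains generically $|M|!$ elements of $\Omega$ and the integrand is invariant under the $S(M)$-action. Assembling these factors gives exactly the right-hand side of Theorem \ref{thm:main}.
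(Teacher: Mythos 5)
Your overall strategy is the same as the paper's and the final bookkeeping lands on the right coefficient, but the description of the gluing step---the crux of the argument and what the paper spends Sections \ref{ss:GlueUnderCurve}--\ref{ss:DefLogAlongSplitting} and beyond establishing---conflates two quite different phenomena, and in the process the attribution of the numerical factors gets scrambled.

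Concretely, the paper separates the gluing into three distinct comparisons. First, the target-side $\mu_r$-gerbe of Lemma \ref{lem:GlueHalfAcc} gives $[\sK_{\Xi_1}\times\sK_{\Xi_2}] = r\cdot(u_\eta)_*[\sK_{1,2}]$ (Lemma \ref{lem:GlueTarget}); this is where the factor $1/r$ enters, and it has nothing to do with the contact orders $c_j$. Second, the comparison of $[\sK_{1,2}]$ with $[\sK_{1,2}']$ via the diagonal evaluation in (\ref{diag:GlueUnderMap}) is a Gysin-pullback statement $\Delta^!([\sK_{1,2}]) = [\sK_{1,2}']$ (Proposition \ref{prop:vir-glue-tw}); proving it requires the normalization exact sequence for $\BL_{\square}$ and a compatibility of obstruction theories, and this step contributes no numerical factor at all. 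Third, the factor $\prod_{j\in M} c_j$ comes from a separate construction: the étale cover $\sK_{1,2}'' = \prod_{i\in M} I_i \to \sK_{1,2}'$ of degree $\prod c_j$ (Lemma \ref{lem:LogIso}), where each $I_i$ parameterizes isomorphisms of the rank-one log structures $\sN_i$ and $\sM^{r,spl}_{l_i}$ on $\sK_{1,2}'$, and the identification $\sK_{1,2}''\cong\sK_\eta$ (Corollary \ref{cor:CompGroupoid} and the moduli description in Remark \ref{rem:LogOverCoarse}). Here the relevant band is $\mu_{c_i}$ at the $i$-th node, not $\mu_r$; identifying this with the gerbe of Lemma \ref{lem:GlueHalfAcc}, as your write-up does, muddles the target-side and curve-side structures. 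You also say that the deformation sequences of Section \ref{ss:ConstructVir} yield $\prod c_j$; they do not---they yield the Gysin identity, while $\prod c_j$ arises purely from counting log enhancements of a fixed underlying glued predeformable map. Finally, the claim that Lemma \ref{lem:RedSplitting} contributes a degree $1/r$ is reversed: the formula $[\sK_{\fQ_r}] = r\cdot(t_r)_*[\sK^{spl}_r]$ supplies a factor of $r$ in the virtual class decomposition, which then cancels against the $1/r$ from the target gerbe. The net product $\prod c_j$ is right, but because you fold the Gysin comparison, the target gerbe, and the log-enhancement cover into one heuristic, the argument as written cannot be checked; in particular there is no argument given for the key compatibility of perfect obstruction theories that makes $\Delta^![\sK_{1,2}] = [\sK_{1,2}']$ hold, which is where the real technical content lies.

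Aside from this, the rest of the proposal (reduction to the central fiber, the splitting of the coarse and stacky targets via $\fQ$, the Künneth expansion of the diagonal, the passage from $\bar\eta$ to $\eta$ absorbing $1/|M|!$, and the distribution of insertions) does match the paper's line of reasoning.
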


\begin{rem}
The formation of the above degeneration formula in the log setting is identical to the one in \cite{Jun2} and \cite{AF}. 
\end{rem}

\subsection{Splitting the coarse target}
Consider the following cartesian diagram of log stacks:
\[
\xymatrix{
\sK_{\fQ} \ar[d]_{s} \ar[r] &\fQ^{ext} \ar[d] \ar[r] &\fQ \ar[d] \ar[r] &\fT^{u,spl}_{0} \ar[d] \\
\sK  \ar[r] &\fT^{etw}_{0} \ar[r] &\fT^{tw}  \ar[r] &\fT^{u}_{0}.
}
\]
Here the sup-script $ext$ means the stack parameterizing extended log structure, which is the same as in the case of curves and targets with extended log structure. Note that since the arrows are all integral log morphisms, the underlying of the above log cartesian diagram coincides with the usual cartesian diagram by removing all log structures. Identical to the case in \cite{AF}, we have the normalization map $\fT^{u,spl}_{0}\to \fT^{u}_{0}$ of pure degree $1$ in the sense of \cite[Section 5]{Co}. Then the map $\fQ^{ext}\to \fT^{etw}_{0}$ is also of pure degree $1$.

\begin{lem}\label{lem:SplitCoarse}
\[s_{*}[\sK_{\fQ}]=[\sK].\]
\end{lem}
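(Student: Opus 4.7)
The plan is to deduce the identity from a Costello-type pushforward principle: if $s\colon \sK_{\fQ}\to\sK$ is representable, proper, of pure degree one, and if its perfect obstruction theory is obtained by pullback from the one on $\sK$, then $s_{*}[\sK_{\fQ}]=[\sK]$. I would prove the lemma by supplying the two inputs that make this principle applicable.

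First, I would verify that $s$ is of pure degree one. The excerpt already records that $\fQ^{ext}\to\fT^{etw}_{0}$ is of pure degree one in the sense of \cite[Section 5]{Co}, since it is the base change of the normalization $\fT^{u,spl}_{0}\to\fT^{u}_{0}$. The left-hand square in the diagram displayed just above the lemma is cartesian, so $s$ is obtained from $\fQ^{ext}\to\fT^{etw}_{0}$ by pulling back along the representable morphism $\sK\to\fT^{etw}_{0}$. Because pure degree one is stable under base change along representable morphisms, $s$ inherits this property.

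Second, I would invoke the base-change compatibility of the virtual fundamental class. The underlying stack $\sK_{g,n}(W_{0},\beta)$ of stable predeformable maps behaves well under base change (established in \cite{Jun1} and \cite{AF}), so the hypotheses of Remark \ref{rem:BaseChange} are in force. This allows Proposition \ref{prop:PerObsBaseChange} to apply, with the roles of $\sB$ and $T$ played by the stacks parametrising unextended expansions of $W_{0}$ and $\fT^{u,spl}_{0}$, respectively. Its corollary then gives the cycle-level identity $[\sK_{\fQ}]=s^{*}[\sK]$.

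Combining the two inputs yields
\[
s_{*}[\sK_{\fQ}]\;=\;s_{*}s^{*}[\sK]\;=\;[\sK],
\]
the last equality being the standard fact that a representable proper morphism of pure degree one satisfies $s_{*}s^{*}=\mathrm{id}$ on Chow groups. The point requiring the most care is matching up the relative perfect obstruction theories over the correct tower of bases: $\sK$ carries $\E_{\sK/\fT^{tw}}$ while $\sK_{\fQ}$ carries the analogous theory relative to $\fQ$, and one must check that the cartesian diagram of log stacks $\sK_{\fQ}\to\fQ^{ext}\to\fQ$ versus $\sK\to\fT^{etw}_{0}\to\fT^{tw}$ is the correct configuration for Proposition \ref{prop:PerObsBaseChange} to produce the desired identification.
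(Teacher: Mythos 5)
Your proposal follows the same route as the paper: the paper's proof is a one-line appeal to Costello's Theorem 5.0.1 together with the base-change compatibility of the virtual class construction, and you correctly identify both ingredients, namely that $\fQ^{ext}\to\fT^{etw}_0$ (hence $s$) is of pure degree one, and that Proposition~\ref{prop:PerObsBaseChange} yields the pulled-back obstruction theory. The one place where your wording is imprecise is the concluding sentence: ``a representable proper morphism of pure degree one satisfies $s_*s^*=\mathrm{id}$ on Chow groups'' is not a standard Chow-theoretic fact in this generality. The map $s$ is the base change of the normalization $\fT^{u,spl}_0\to\fT^{u}_0$, which is neither flat nor lci, so there is no intrinsic pullback $s^*$ on arbitrary Chow classes for which a projection formula can be quoted. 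What does hold, and is precisely what Costello's Theorem 5.0.1 asserts, is the identity $s_*[\sK_\fQ]=[\sK]$ at the level of virtual fundamental classes under the two hypotheses you have already verified. Since you state this as your ``Costello-type pushforward principle'' at the outset, the cleanest finish is simply to cite \cite[Theorem 5.0.1]{Co} directly at that point (as the paper does), rather than re-deriving it from a purported fact about $s_*s^*$.
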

\begin{proof}
This follows from \cite[Theorem 5.0.1]{Co}, and our construction of virtual fundamental classes which is well-behaved under base change. 
\end{proof}

\subsection{Splitting the stack target}
As in (\ref{equ:split-target}), we have the following decomposition:
\[\fQ = \coprod_{r\geq 1}{\fQ_{r}}\]
which implies the decomposition
\[\sK_{\fQ}=\coprod_{r\geq 1}{\sK_{\fQ_{r}}}.\]

The stack $\fQ_{r}$ is non-reduced, and the reduced one is $\fT_{0}^{r,spl}$, which is the stack parameterizing log twisted accordions with splitting divisor of index $r$. By Lemma \ref{lem:RedSplitting}, we have a degree $1/r$ map $\fT_{0}^{r,spl}\to \fQ_{r}$. The same proof as above gives:

\begin{lem}\label{lem:SplitStack}
Consider the (log) cartesian diagram 
\[
\xymatrix{
\sK^{spl}_{r} \ar[r] \ar[d]_{t_{r}} & (\fT_{0}^{r,spl})^{ext} \ar[d] \ar[r] &\fT_{0}^{r,spl} \ar[d]  \\
\sK_{\fQ_{r}} \ar[r] &(\fQ_{r})^{ext} \ar[r] & \fQ_{r} .
}
\]
Then \[[\sK_{\fQ_{r}}]=r \cdot (t_{r})_{*} [\sK_{r}^{spl}]. \]
\end{lem}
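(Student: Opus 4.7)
The plan is to mirror the proof of Lemma \ref{lem:SplitCoarse}, with the single modification that the relevant pushforward is along a map of pure degree $1/r$ rather than pure degree $1$. The essential new input is Lemma \ref{lem:RedSplitting}, which tells us that $\fT_{0}^{r,spl}\to \fQ_{r}$ is of pure degree $1/r$ in the sense of \cite[Section 5]{Co}, together with the base-change compatibility of the virtual fundamental class established in Section \ref{sec:fundamental-class}.

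First I would verify that the two squares in the displayed diagram are cartesian as ordinary stacks (after forgetting log structures), not merely as log stacks. The horizontal maps $(\fQ_{r})^{ext}\to \fQ_{r}$ and $(\fT_{0}^{r,spl})^{ext}\to \fT_{0}^{r,spl}$ are open immersions on the underlying level by Remark \ref{rem:remove-free}, while the vertical arrows are integral log morphisms, so the log cartesian squares coincide with the cartesian squares of underlying stacks. Since pure degree is preserved under arbitrary base change, the map $(\fT_{0}^{r,spl})^{ext}\to (\fQ_{r})^{ext}$ inherits the pure degree $1/r$ from Lemma \ref{lem:RedSplitting}, and one further base change shows that $t_{r}$ itself is of pure degree $1/r$.

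Next I would invoke Proposition \ref{prop:PerObsBaseChange} and its corollary to identify $[\sK_{r}^{spl}]$ with the virtual pullback of $[\sK_{\fQ_{r}}]$ along $t_{r}$. With this compatibility of perfect obstruction theories in hand, Costello's pushforward theorem \cite[Theorem 5.0.1]{Co} applied to $t_{r}$ yields
\[
(t_{r})_{*}[\sK_{r}^{spl}] \;=\; \frac{1}{r}\,[\sK_{\fQ_{r}}],
\]
which rearranges to the claimed identity $[\sK_{\fQ_{r}}]=r\cdot (t_{r})_{*}[\sK_{r}^{spl}]$.

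The main point requiring care is the interchange between the log and non-log fiber products, which is what allows Costello's framework, formulated for ordinary algebraic stacks, to be applied here; this is handled by the integrality of all the log morphisms at play, exactly as in the preceding lemma. Beyond this verification, the argument is a direct repetition of the proof of Lemma \ref{lem:SplitCoarse}, with the degree factor propagated through.
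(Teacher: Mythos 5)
Your proposal follows the same route as the paper: the paper literally says ``The same proof as above gives'' (referring to Lemma~\ref{lem:SplitCoarse}), i.e.\ apply Costello's pushforward theorem \cite[Theorem 5.0.1]{Co} together with the base-change compatibility of the virtual class, with the degree factor $1/r$ supplied by Lemma~\ref{lem:RedSplitting}. Your write-up makes the degree bookkeeping explicit and spells out why Costello's theorem, stated for ordinary stacks, applies — which is a useful amount of detail to have on record.

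One small correction: you claim that $(\fQ_{r})^{ext}\to\fQ_{r}$ and $(\fT_{0}^{r,spl})^{ext}\to\fT_{0}^{r,spl}$ are open immersions ``by Remark~\ref{rem:remove-free}.'' The remark actually states the opposite inclusion: there is an open immersion $\sB^{tw}\to\sB^{etw}$, while the forgetful map $\sB^{etw}\to\sB^{tw}$ is a smooth (locally a product with $\sA^{n}$) but non-monomorphic morphism. Your conclusion — that the underlying diagram is cartesian as ordinary stacks — is still correct; the justification the paper uses for this, and that you also invoke, is that the log morphisms in play are integral (and the vertical comparison arrows are strict), so the log fiber product agrees with the ordinary one. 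The open-immersion claim should simply be dropped. With that fix, the argument matches the paper's.
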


\subsection{Decomposing the moduli space with split target}
Given the log twisting index $r$, denote by $\bar{\Omega}_{r}$ the set of isomorphism classes of type $\eta$ with $r(\eta)=r$.
We have the refined decomposition 
\[\sK^{spl}_{r}=\coprod_{\bar{\eta}\in \Bar{\Omega}_{r}}\sK_{\Bar{\eta}}.\]
Denote by $t_{\bar{\eta}}:\sK_{\Bar{\eta}}\to \sK_{\fQ_{r}}$ the restriction of $t_{r}$. Then we have
\[[\sK_{\fQ_{r}}]=r\cdot (t_{\Bar{\eta}})_{*}\sum_{\Bar{\eta}\in\Bar{\Omega}_{r}}{[\sK_{\Bar{\eta}}]}.\]
Note that $\sK_{\eta} \to \sK_{\Bar{\eta}}$ is an $S(M)$-bundle. Therefore $\sK_{\eta}$ has an associated perfect obstruction theory and virtual fundamental class. Denote by $t_{\eta}$ the following composition 
\[\sK_{\eta} \to \sK_{\Bar{\eta}}\to \sK_{\fQ_{r}}.\]

Putting Lemmas \ref{lem:SplitCoarse} and \ref{lem:SplitStack} together, we have:
\begin{prop}\label{prop:first-decomp-vir}
\[[\sK] = \sum_{\eta\in\Omega}\frac{r(\eta)}{|M|!}\cdot(s\circ t_{\eta})_{*}[\sK_{\eta}].\]
\end{prop}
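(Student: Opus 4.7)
The plan is to chain together the two previously established identities (Lemmas \ref{lem:SplitCoarse} and \ref{lem:SplitStack}) with the two open--closed decompositions
\[
\sK_{\fQ}=\coprod_{r\geq 1}\sK_{\fQ_{r}} \quad \text{and} \quad \sK^{spl}_{r}=\coprod_{\bar{\eta}\in\bar{\Omega}_{r}}\sK_{\bar{\eta}},
\]
and then reindex the resulting sum from equivalence classes $\bar\eta\in\bar\Omega$ to labeled splittings $\eta\in\Omega$, paying the combinatorial price $|M|!$ associated to the $S(M)$-orbits.

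First, I would push down along $s$ using Lemma \ref{lem:SplitCoarse} and distribute over the disjoint decomposition indexed by the log twisting index:
\[
[\sK]=s_{*}[\sK_{\fQ}]=\sum_{r\geq 1}s_{*}[\sK_{\fQ_{r}}].
\]
Substituting Lemma \ref{lem:SplitStack} and using the refined decomposition of $\sK^{spl}_{r}$ yields
\[
[\sK]=\sum_{r\geq 1}r\cdot (s\circ t_{r})_{*}[\sK^{spl}_{r}]=\sum_{r\geq 1}r\sum_{\bar\eta\in\bar\Omega_{r}}(s\circ t_{\bar\eta})_{*}[\sK_{\bar\eta}].
\]
At this point the proposition is already visible up to changing the index of summation.

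The last step is to replace the sum over $\bar\eta\in\bar\Omega_r$ by a sum over $\eta\in\Omega_r$. Since the $S(M)$-action on $\Omega_{r}$ is free (each relabeling of $M$ produces a genuinely new labeled splitting), every orbit has exactly $|M|!$ elements. By the preceding discussion $\sK_{\eta}\to\sK_{\bar\eta}$ is an $S(M)$-bundle and $[\sK_{\eta}]$ is the virtual class induced from $[\sK_{\bar\eta}]$ via this bundle, hence for each representative $\eta$ in the orbit of $\bar\eta$ one has
\[
(s\circ t_{\eta})_{*}[\sK_{\eta}]=(s\circ t_{\bar\eta})_{*}[\sK_{\bar\eta}],
\]
so that summing the $|M|!$ equal contributions per orbit gives $\sum_{\eta\in\Omega_{r}}(s\circ t_{\eta})_{*}[\sK_{\eta}]=|M|!\sum_{\bar\eta\in\bar\Omega_{r}}(s\circ t_{\bar\eta})_{*}[\sK_{\bar\eta}]$. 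Inserting this identity and noting $r=r(\eta)$ for $\eta\in\Omega_{r}$ produces the asserted coefficient $r(\eta)/|M|!$.

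The only delicate point is the bookkeeping in this last step: one must fix a convention under which each $\sK_{\eta}$ is identified (via the $S(M)$-bundle structure) with one sheet isomorphic to $\sK_{\bar\eta}$, so that the pushforward is independent of the choice of representative and the factor $1/|M|!$ emerges cleanly rather than, say, $1/(|M|!)^{2}$. Once this interpretation is pinned down the argument is purely formal; the real content of the decomposition lies in Lemmas \ref{lem:SplitCoarse} and \ref{lem:SplitStack}, which have already been proved.
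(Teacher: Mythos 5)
Your overall plan mirrors the paper's: push Lemma \ref{lem:SplitCoarse} through the decomposition $\sK_{\fQ}=\coprod_{r}\sK_{\fQ_{r}}$, apply Lemma \ref{lem:SplitStack}, decompose further by $\bar\Omega_{r}$, and reindex from unlabeled classes $\bar\eta$ to labeled splittings $\eta$ at the cost of a factor $|M|!$. That is indeed all the paper does, so the structure is right.

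The weak point is the assertion that the $S(M)$-action on $\Omega_{r}$ is free, with the justification that ``each relabeling of $M$ produces a genuinely new labeled splitting.'' This is false in general: if $\eta$ has two roots $j_{1},j_{2}$ attached to the same vertices of $\Xi_{1}$ and $\Xi_{2}$ with $c_{j_{1}}=c_{j_{2}}$, then the transposition $(j_{1}\,j_{2})$ fixes $\eta$, so the stabilizer is nontrivial and the orbit has strictly fewer than $|M|!$ elements. Correspondingly, the companion identity $(s\circ t_{\eta})_{*}[\sK_{\eta}]=(s\circ t_{\bar\eta})_{*}[\sK_{\bar\eta}]$, which you deduce from freeness, also fails in this case: the map $\sK_{\eta}\to\sK_{\bar\eta}$ is then finite \'etale of degree $|\mathrm{Stab}_{S(M)}(\eta)|>1$.

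The argument is nonetheless salvageable because the two deviations cancel. The correct bookkeeping is that the orbit of $\bar\eta$ has $|M|!/|\mathrm{Stab}(\eta)|$ elements, and for each $\eta$ in the orbit one has $(s\circ t_{\eta})_{*}[\sK_{\eta}]=|\mathrm{Stab}(\eta)|\cdot(s\circ t_{\bar\eta})_{*}[\sK_{\bar\eta}]$; summing over the orbit yields $|M|!\cdot(s\circ t_{\bar\eta})_{*}[\sK_{\bar\eta}]$ by the orbit--stabilizer theorem, which is the identity the reindexing really requires. Your closing ``delicate point'' paragraph flags the right worry but resolves it by postulating that $\sK_{\eta}$ is a single sheet isomorphic to $\sK_{\bar\eta}$; this only holds when the stabilizer is trivial. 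The robust formulation is that $\coprod_{\eta\in\mathrm{orbit}(\bar\eta)}\sK_{\eta}\to\sK_{\bar\eta}$ is the $S(M)$-torsor, so that orbit--stabilizer produces the uniform factor $|M|!$ per orbit without any freeness hypothesis. As written, a careful reader would reject the freeness claim, so this step needs to be reworded even though the proposition's formula itself is correct.
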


\subsection{Gluing the target}
By Lemma \ref{lem:GlueHalfAcc} the morphism $\fT^{r,spl}_{0}\to\sT_{r}^{tw}\times\sT_{r}^{tw}$ gives a gerbe banded by $\mu_{r}$. Denote by $\sK_{\Xi_{i}}=\sK_{\Xi_{i}}(X_{i}, D_{i})$ the stack of relative log stable maps with data $\Xi_{i}$ for $i=1,2$. Next we try to decompose $[\sK_{\eta}]$ in terms of $\sK_{\Xi_{1}}$ and $\sK_{\Xi_{2}}$. Consider the following catesian diagram of log stacks:
\begin{equation}\label{diag:GlueTarget}
\xymatrix{
\sK_{1,2} \ar[r] \ar[d]_{u_{\eta}} & {*} \ar[r]  \ar[d] & (\fT^{r,spl}_{0})^{ext} \ar[r] \ar[d]& \fT^{r,spl}_{0}  \ar[d] \\
\sK_{\Xi_{1}}\times\sK_{\Xi_{2}} \ar[r] & \sT^{etw}\times\sT^{etw} \ar[r] & (\sT^{tw}\times\sT^{tw})^{ext} \ar[r]& \sT^{tw}\times\sT^{tw} 
}
\end{equation}
Note that for $\sT^{etw}\times\sT^{etw}$ we have two extended log structures from each $\sT^{etw}$. The arrow $\sT^{etw}\times\sT^{etw} \to (\sT^{tw}\times\sT^{tw})^{ext}$ at the bottom is given by forgetting the order of extended log structure. 

\begin{lem}\label{lem:split-target}
The map $\sT^{etw}\times\sT^{etw} \to (\sT^{tw}\times\sT^{tw})^{ext}$ is \'etale of DM-type.
\end{lem}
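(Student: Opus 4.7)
The plan is to unwind the definitions of both stacks as $S$-valued functors, verify étaleness through a local chart computation, and deduce the DM-type property from the corresponding property of the ambient $\sL og$-stacks.

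First, I would describe objects on both sides. Over a test scheme $S$, an object of $\sT^{etw}\times\sT^{etw}$ consists of two (unextended) log twisted expanded pairs $(\sX_i\to S,\sM_{S,i}^{\sX_i/S}\to\sM_{S,i}')$ together with a simple extension $\sM_{S,i}'\hookrightarrow\sM_{S,i}$ for each $i=1,2$, in the sense of Definition~\ref{def:log-twist-FM}. An object of $(\sT^{tw}\times\sT^{tw})^{ext}$ consists of the same pair of unextended data together with a single simple extension of the combined log structure $\sM_{S,1}'\oplus_{\sO_S^*}\sM_{S,2}'\hookrightarrow \sM_S^{ext}$. The forgetful arrow amalgamates the two free extensions on the left into their direct sum $\sM_{S,1}\oplus_{\sO_S^*}\sM_{S,2}$ on the right.

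Second, for étaleness I would work étale-locally with charts. Each extension on the left admits a chart $\N^{m_i}\hookrightarrow\N^{m_i}\oplus\N^{n_i}$; these combine to a chart $\N^{m_1+m_2}\hookrightarrow\N^{m_1+m_2}\oplus\N^{n_1+n_2}$ for the image on the right. Conversely, for an $S$-point of the right-hand stack whose extension has local rank $n$, any lift to $\sT^{etw}\times\sT^{etw}$ corresponds to choosing an ordered partition of the $n$ distinguished free generators into two subsets of sizes $(n_1,n_2)$ with $n_1+n_2=n$. Since this is a finite, discrete datum stable under étale-local base change, the forgetful map is locally a disjoint union of open immersions indexed by such ordered partitions, hence étale.

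Third, for the DM-type property I would invoke Remark~\ref{rem:remove-free}, which identifies both $\sT^{etw}\times\sT^{etw}$ and $(\sT^{tw}\times\sT^{tw})^{ext}$ as open substacks of the corresponding $\sL og$-stacks over $\sT^{tw}\times\sT^{tw}$. By Olsson's theorems recalled in the appendix, $\sL og_X\to X$ is always of DM-type, so both stacks have unramified diagonal over $\sT^{tw}\times\sT^{tw}$; combined with the étaleness established above, the relative diagonal of our map is unramified, yielding the DM-type property.

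The main obstacle is verifying compatibility of the simplicity condition on both sides: one must check that a simple extension of $\sM_{S,1}'\oplus_{\sO_S^*}\sM_{S,2}'$ is precisely a pair of simple extensions of the two factors glued along $\sO_S^*$. This reduces to the elementary combinatorial fact that a direct sum of simple charts is simple, and conversely that a simple chart of rank $n$ decomposes along any ordered partition of its free generators into two simple sub-charts of ranks $n_1,n_2$; this step slots cleanly into the local analysis above.
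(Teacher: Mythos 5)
Your proposal is correct and takes essentially the same route as the paper's own proof: both reduce to a local chart computation exhibiting the preimage of a chart $U_1\times U_2\times\sA^n$ of $(\sT^{tw}\times\sT^{tw})^{ext}$ inside $\sT^{etw}\times\sT^{etw}$ as a finite disjoint union indexed by the ways the $n$ free generators of the extended log structure distribute between the two factors, from which \'etaleness and DM-type follow. Your added discussion of the moduli interpretation in terms of splittings of the amalgamated log structure, and the separate DM-type argument via Olsson's $\sL og$-stacks, just spell out what the paper's terse proof leaves implicit.
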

\begin{proof}
The statement is local on the target. Consider charts $U_{1}$ and $U_{2}$ for $\sT^{tw}$, then we form a chart $U=U_{1}\times U_{2}\times \sA^{n}$ for $(\sT^{tw}\times\sT^{tw})^{ext}$. Clearly, the targets are covered by such charts. The preimage of this chart in $\sT^{etw}\times\sT^{etw}$ is given by assign copies of $\sA$ to each $\sT^{etw}$, i.e.
\[(\sT^{etw}\times\sT^{etw})\times_{(\sT^{tw}\times\sT^{tw})^{ext}}U=\coprod_{n_{1}+n_{2}=n}(U_{1}\times\sA^{n_{1}})\times(U_{1}\times\sA^{n_{2}}).\]
The statement follows from this. 
\end{proof}

\begin{rem}
The stack $\sK_{1,2}$ parametrizes a glued log twisted target, along with a pair of relative log stable maps to the two parts of the partial normalization of the glued target with data $\Xi_{1}$ and $\Xi_{2}$.
\end{rem}

In Section \ref{ss:ConstructVir}, we construct the perfect obstruction theory $\E_{\sK_{\Xi_{i}}/\sT^{tw}}$ for $\sK_{\Xi_{i}}$ relative to $\sT^{etw}$. Thus, we have a perfect obstruction theory $\E_{\sK_{\Xi_{1}}/\sT^{tw}}\oplus\E_{\sK_{\Xi_{2}}/\sT^{tw}}$ for $\sK_{\Xi_{1}}\times\sK_{\Xi_{2}}$ relative to $\sT^{etw}\times\sT^{etw}$. By Lemma \ref{lem:GlueHalfAcc} and \ref{lem:split-target}, the vertical maps in (\ref{diag:GlueTarget}) are \'etale and of DM-type. The perfect obstruction theory $\E_{\sK_{\Xi_{1}}/\sT^{tw}}\oplus\E_{\sK_{\Xi_{2}}/\sT^{tw}}$ pulls back, and defines a perfect obstruction theory for $\sK_{1,2}/{*}$. Thus, it defines a perfect obstruction theory for $\sK_{1,2}$ relative to $(\fT^{r,spl}_{0})^{ext}$. Denote by $[\sK_{1,2}]$ and $[\sK_{\Xi_{1}}\times\sK_{\Xi_{2}}]=[\sK_{\Xi_{1}}]\times[\sK_{\Xi_{2}}]$ the associated virtual fundamental classes. Therefore, we have the following result:

\begin{lem}\label{lem:GlueTarget}
\[[\sK_{\Xi_{1}}\times\sK_{\Xi_{2}}]=r\cdot (u_{\eta})_{*}[\sK_{1,2}].\]
\end{lem}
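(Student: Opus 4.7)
The plan is to view $u_\eta$ as a composition of base changes of the three vertical arrows in diagram (\ref{diag:GlueTarget}) and then apply the base-change compatibility of the virtual fundamental classes constructed in Section \ref{sec:fundamental-class}, together with the gerbe-degree computation of Lemma \ref{lem:GlueHalfAcc}.

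First I would analyze the structure of $u_\eta$ itself. The rightmost vertical arrow $\fT^{r,spl}_{0}\to \sT^{tw}\times\sT^{tw}$ is a $\mu_r$-gerbe of degree $1/r$ by Lemma \ref{lem:GlueHalfAcc}. Base-changing it along $(\sT^{tw}\times\sT^{tw})^{ext}\to \sT^{tw}\times\sT^{tw}$ preserves this property, and by Lemma \ref{lem:split-target} its further pull-back along the \'etale, DM-type morphism $\sT^{etw}\times\sT^{etw}\to(\sT^{tw}\times\sT^{tw})^{ext}$ remains a $\mu_r$-gerbe of degree $1/r$. Since the leftmost square is cartesian, the resulting arrow $u_\eta$ is again a representable $\mu_r$-gerbe of degree $1/r$; in particular it is proper and \'etale up to the gerbe structure, so the usual pull-push formula $(u_\eta)_* u_\eta^* = (1/r)\cdot\mathrm{id}$ holds on Chow groups.

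Next I would check that the virtual classes are compatible with this base change. By construction, the relative obstruction theory on $\sK_{1,2}$ over $(\fT_0^{r,spl})^{ext}$ is, by definition, the pull-back of $\E_{\sK_{\Xi_1}/\sT^{tw}}\oplus\E_{\sK_{\Xi_2}/\sT^{tw}}$ from $\sK_{\Xi_1}\times\sK_{\Xi_2}$, and the vertical arrows in (\ref{diag:GlueTarget}) are all either strict, \'etale, or gerbe-like; in particular they are log smooth with smooth underlying morphisms. Therefore Proposition \ref{prop:PerObsBaseChange} (or its direct extension to this chain of base changes, essentially \cite{Behrend-Fantechi} 7.2) yields
\[
[\sK_{1,2}] \;=\; u_\eta^{*}\, [\sK_{\Xi_{1}}\times\sK_{\Xi_{2}}] \;=\; u_\eta^{*}\bigl([\sK_{\Xi_1}]\times[\sK_{\Xi_2}]\bigr).
\]

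Combining these two steps via proper push-forward,
\[
(u_\eta)_{*}[\sK_{1,2}] \;=\; (u_\eta)_{*}\,u_\eta^{*}[\sK_{\Xi_{1}}\times\sK_{\Xi_{2}}] \;=\; \frac{1}{r}\,[\sK_{\Xi_{1}}\times\sK_{\Xi_{2}}],
\]
which after multiplication by $r$ gives the claim. The main obstacle I anticipate is the second step: one must ensure that the obstruction theory on $\sK_{1,2}$ really agrees with the pullback of the product obstruction theory, despite passing through the extended log stacks $(\fT^{r,spl}_0)^{ext}$ and $(\sT^{tw}\times\sT^{tw})^{ext}$ where extra log structures have been added. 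This is where Lemma \ref{lem:split-target}, asserting that $\sT^{etw}\times\sT^{etw}\to(\sT^{tw}\times\sT^{tw})^{ext}$ is \'etale of DM-type, is essential --- it guarantees that the perfect obstruction theories match up with no correction from $\BL_{\tau}$-type cones, so that the pulled-back virtual class coincides with $[\sK_{1,2}]$ on the nose.
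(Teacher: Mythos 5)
Your proof is correct and takes essentially the same route as the paper: the paper sets up diagram (\ref{diag:GlueTarget}), invokes Lemmas \ref{lem:GlueHalfAcc} and \ref{lem:split-target} to see that the vertical maps are \'etale of DM-type with $u_\eta$ a $\mu_r$-gerbe of degree $1/r$, notes that the obstruction theory on $\sK_{1,2}$ is by construction the pullback of the product obstruction theory, and concludes that the virtual classes pull back. One small slip worth flagging: a $\mu_r$-gerbe is not representable (its fibers are $B\mu_r$, not schemes), so ``representable $\mu_r$-gerbe'' is a contradiction in terms --- but the argument only needs $u_\eta$ to be proper and flat of degree $1/r$, which does hold.
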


\subsection{Gluing the underlying maps}\label{ss:GlueUnderCurve}
Denote by $f_{i}:\sC_{i}\to \sX_{i}$ for $i=1,2$ the universal relative log stable maps over $\sK_{1,2}$ with data $\Xi_{1}$ and $\Xi_{2}$ respectively, and $\sW$ the universal glued log target over $\sK_{1,2}$. Let $\sD=D\times \sK_{1,2}$ be the universal splitting divisor of $\sW$ over $\sK_{1,2}$, and $\sD_{1}$ and $\sD_{2}$ the universal divisors in $\sX_{1}$ and $\sX_{2}$ respectively. Then we have natural isomorphisms $\sD\cong\sD_{1}\cong\sD_{2}$. For convenience, we use $\sD$ to denote both $\sD_{1}$ and $\sD_{2}$ if there is no confusion. Denote by $\pi:\sD\to D$ the canonical projection.

For each $i \in M$, we have sections $\Sigma_{\Xi_{1},i}:\sK_{1,2}\to \sC_{1}$ and $\Sigma_{\Xi_{2},i}:\sK_{1,2}\to \sC_{2}$ given by the universal pre-stable curves $\sC_{1}$ and $\sC_{2}$ respectively. Consider the following compositions:
\[\Bev_{\Xi_{1},i}:=\pi\circ f_{1}\circ\Sigma_{\Xi_{1},i}: \sK_{1,2} \to D \mbox{\ \ and\ \ } \Bev_{\Xi_{2},i}:=\pi\circ f_{2}\circ\Sigma_{\Xi_{2},i}: \sK_{1,2}\to D.\]
Combine those maps, we have
\[\Bev_{1,2}=\prod_{i\in M}\Bev_{\Xi_{1},i}\times\Bev_{\Xi_{2},i}:\sK_{1,2}\to (D\times D)^{|M|}.\]
Now consider the following cartesian diagram:
\begin{equation}\label{diag:GlueUnderMap}
\xymatrix{
\sK_{1,2}' \ar[rr]^{v'} \ar[d]_{\Bev_{\eta}'} && \sK_{1,2} \ar[d]^{\Bev_{1,2}} \\
D^{|M|} \ar[rr]^{\Delta\ \ } && (D\times D)^{|M|},
}
\end{equation}
where the bottom map $\Delta$ is the diagonal morphism. 

By pulling back the universal family over $\sK_{1,2}$, there are two universal relative log stable maps $f_{i}':\sC_{i}'\to \sX_{i}'$ of type $\Xi_{i}$ for $i= 1,2$, and the glued log target $\sW'$ over $\sK_{1,2}'$. Considering only the underlying structure, the above cartesian diagram induces the following commutative diagram (without log structures):
\begin{equation}\label{diag:UndGlue}
\xymatrix{
 & \sG \ar[rr] \ar[dl] \ar@{-->}[dd] && \sC_{1}' \ar[dl]^{v_{1}} \ar[dd]^{f_{1}'} \\
\sC_{2}' \ar[rr]^{\ \ \ \ \ v_{2}} \ar[dd]_{f_{2}'} && \sC \ar[dd] & \\
 & \sD' \ar@{-->}[rr] \ar@{-->}[dl] && \sX_{1}' \ar[dl]\\
\sX_{2}' \ar[rr] && \sW' &.
}
\end{equation}
Here $\sG$ is the union of the sections numbered by $M$, and the top and bottom squares are push-out diagrams. Thus, we obtain a glued underlying curve $\sC$, and a usual stable morphism $f: \sC\to \sW'$ of underline schemes over $\sK_{1,2}'$. Furthermore, it is not hard to see that $f$ is a family of predeformable maps. Therefore, the stack $\sK_{1,2}'$ parametrizes all possible underlying maps of log stable maps with their splittings to two relative log stable maps of data $\Xi_{1}$ and $\Xi_{2}$, and the glued log targets.

\subsection{Log structures on $\sK_{1,2}'$}\label{ss:LogOnK'}
\subsubsection{The log structure from the splitting divisor}
Note that on $\sK_{1,2}'$ there is a canonical log structure $\sM^{1,spl}$ given by the smoothing of the splitting divisor in $\sW'$. Let $\sM^{\sW'/\sK_{1,2}'}$ be the log structure on $\sK_{1,2}'$ with respect to the glued unextended log target $\sW'$. Note that such log structure is obtained by pulling back the log structure from the universal family over $\fT^{r,spl}_{0}$. By Remark \ref{rem:LineBundleGlue} and Lemma \ref{lem:GlueHalfAcc}, there is a rank $1$ locally free sub-log structure $\sM^{r,spl}\subset\sM^{\sW'/\sK_{1,2}'}$, and a morphism of log structures $\sM^{1,spl}\to\sM^{r,spl}$, which locally have chart:
\[
\xymatrix{
\N \ar[d] \ar[r]^{\times r} & \N \ar[d] \\
\sM^{1,spl} \ar[r] & \sM^{r,spl}.
}
\]

Given a positive integer $c|r$, denote by $l=\frac{r}{c}$. We define a new log structure $\sM^{r,spl}_{l}$ on $\sK_{1,2}'$ as follows. Let $e$ be the local generator of $\sM^{r,spl}$. Then locally we define $\sM^{r,spl}_{l}$ to be the sub-log structure of $\sM^{r,spl}$ generated by $l\cdot e$. It is not hard to check that such $\sM^{r,spl}_{l}$ is well-defined, and is a rank $1$ locally free log structure. Since $l|r$, the morphism $\sM^{1,spl}\to\sM^{r,spl}$ induces a morphism $\sM^{1,spl}\to\sM^{r,spl}_{l}$, which locally have chart:
\[
\xymatrix{
\N \ar[d] \ar[r]^{\times c} & \N \ar[d] \\
\sM^{1,spl} \ar[r] & \sM^{r,spl}_{l}.
}
\]
By our discussion in Section \ref{ss:RootLog}, this gives a map $\sK_{1,2}'\to\fT^{c,spl}_{0}$. 

\subsubsection{The log structures from the splitting nodes}
Denote by $p_{i}\in \sC$ the universal splitting node, which splits to the two marked points numbered by $i\in M$. Note that there is a rank $1$, locally free log structure $\sN_{i}$ on $\sK'_{1,2}$, corresponding to smooth of the node $p_{i}$. Let $c_{i}$ be the contact order at $p_{i}$.

\begin{lem}\label{lem:compare-standard-log}
There is a natural map $\sM^{1,spl}\to \sN_{i}$, locally given by the following chart:
\[
\xymatrix{
\N \ar[d] \ar[r]^{\times c} & \N \ar[d] \\
\sM^{1,spl} \ar[r] & \sN_{i}.
}
\]
Therefore, we have a morphism $\sK_{1,2}'\to\fT^{c_{i},spl}_{0}$.
\end{lem}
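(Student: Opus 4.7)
The plan is to construct the map $\sM^{1,spl} \to \sN_i$ by working \'etale locally near the distinguished node $p_i$ and its image on the splitting divisor, using the predeformable local model already recalled in Remark \ref{rem:LogAtSing}; then invoke the root-stack description of $\fT^{c_i,spl}_0$ to promote the map to the desired morphism of stacks. Concretely, near the splitting divisor of $\sW'$ we have an \'etale chart $\sW' \to \Spec\sO_{\sK_{1,2}'}[x,y]/(xy-t)$ where $t \in \sO_{\sK_{1,2}'}$ is the smoothing parameter, and $\sM^{1,spl}$ is the log structure associated to the pre-log structure $\N \to \sO_{\sK_{1,2}'}$, $1 \mapsto t$. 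Analogously, \'etale locally near $p_i$ in $\sC$, the glued curve looks like $\Spec\sO_{\sK_{1,2}'}[u,v]/(uv-s)$, and $\sN_i$ is the log structure associated to $\N \to \sO_{\sK_{1,2}'}$, $1 \mapsto s$.

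The predeformability of $f$ at $p_i$ with contact order $c_i$, which holds after gluing by Section \ref{ss:GlueUnderCurve} and is encoded as $(x,y) \mapsto (c_i a, c_i b)$ in Remark \ref{rem:LogAtSing}, implies that one may choose the coordinates so that $f^*x = u^{c_i}$ and $f^*y = v^{c_i}$. Consequently $t = f^*(xy) = s^{c_i}$, giving a map of pre-log structures $\N \to \N$, $1 \mapsto c_i$, compatible with the two structure maps to $\sO_{\sK_{1,2}'}$. This induces exactly the claimed morphism of log structures $\sM^{1,spl} \to \sN_i$ with chart $\times c_i$. To go global I would check that this construction is independent of the chosen \'etale coordinates: any other local model differs by units, and the equation $t = s^{c_i}$ is preserved up to units; the contact order itself is globally constant on each connected component of the locus along which the node $p_i$ persists because it is prescribed by the root data of $\Xi_1$ (equivalently of $\Xi_2$). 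Hence the local maps descend to a canonical map on all of $\sK_{1,2}'$.

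For the induced morphism $\sK_{1,2}' \to \fT^{c_i,spl}_0$, I would invoke the root-stack description given in Remark \ref{rem:LineBundleGlue} together with the proof of Proposition \ref{prop:LogTwistPair}: the stack $\fT^{c_i,spl}_0$ is the $c_i$-th root of the universal splitting divisor on $\fT^{u,spl}_0$. The underlying structure map $\sK_{1,2}' \to \fT^{u,spl}_0$ is already supplied by the glued log target, and the additional data needed by the universal property of the root stack, as recalled in Section \ref{ss:RootLog}, is exactly a rank one locally free log structure together with a morphism from $\sM^{1,spl}$ to it whose chart is multiplication by $c_i$; this is supplied by $(\sN_i, \sM^{1,spl} \to \sN_i)$ just constructed. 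The main obstacle is the canonicity step: to ensure that the chart-level recipe $1 \mapsto c_i$ really defines a morphism of log structures independent of the chosen presentation, one must carefully track unit ambiguities in the factorization $f^*x = u^{c_i}$, $f^*y = v^{c_i}$, which is precisely controlled by the predeformability condition and the log admissibility at distinguished nodes recalled in Remark \ref{rem:LogAtSing}.
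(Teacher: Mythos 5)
Your proposal is correct and follows essentially the same route as the paper's proof: both work \'etale locally near the splitting node $p_i$ and its image on the splitting divisor, use the predeformable local form $f^*(x)=u^{c_i}$, $f^*(y)=v^{c_i}$ (the paper labels the target coordinates $u,v$ and the curve coordinates $x,y$, but this is only notational), deduce that the smoothing parameter of the target equals the $c_i$-th power of that of the curve, and conclude that this gives a morphism of log structures with chart $\times c_i$, checking independence of the chosen chart. Your appeal to the root-stack moduli interpretation of $\fT^{c_i,spl}_0$ for the final sentence is likewise the mechanism the paper relies on (via the discussion preceding the lemma and Section \ref{sss:LogRoot}), so the two arguments agree.
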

\begin{proof}
Consider a geometric point $\bar{s}\in \sK_{1,2}'$. Then there is a geometric point $\bar{t}\in p_{i}$ over $\bar{s}$, and $\bar{t}':=f(\bar{t})\in \sW'$. Locally around $\bar{t}$, we can choose coordinates $x$ and $y$ which correspond to the coordinates of the two components intersecting at the node $p_{i}$, such that $e=\log x +\log y$ gives the local generator of $\sN_{i}$ on the base. Similarly for $\bar{t}'$, we can choose local coordinates $u$ and $v$ that correspond to the coordinates of the two components intersecting along the splitting divisor, such that $e'=\log u+\log v$ gives the local generator of $\sM^{1,spl}$ on $\sK_{1,2}'$. Since $f$ is predeformable along $p_{i}$, we may assume that $f^{*}(u)=x^{c_{i}}$ and $f^{*}(v)=y^{c_{i}}$. We can locally define a morphism $\sM^{1,spl} \to \sN_{i}$ by $e' \mapsto c_{i}\cdot e$. Note that such morphism is independent of the choice of the chart, and only relies on the underlying map $f$. Thus, we can glue the local construction and obtain a morphism $\sM^{1,spl} \to  \sN_{i}$ as in the statement. 
\end{proof}

\subsection{Comparison of stacks $\sK_{1,2}'$ and $\sK_{\eta}$}\label{ss:DefLogAlongSplitting}

First, we want to put log structures on the universal map $f:\sC\to\sW'$ over $\sK_{1,2}'$. This should be compatible with the two relative log stable maps. By the decomposition in Remark \ref{rem:DecompLogCurve}, we have the canonical log structure $\sM^{nd}$ on $\sK_{1,2}'$, which smoothes all the non-distinguished nodes of $\sC$ only. Consider the following log structure on $\sK_{1,2}'$: \[\sM_{\sK_{1,2}'}\cong \sM^{nd}\oplus_{\sO^{*}_{\sK_{1,2}'}}\sM^{\sW'/\sK_{1,2}'}.\] 

Since the map of underlying structure is given, by Remark \ref{rem:LogAtSing} we only need to define maps of log structures $\sM^{\sC/\sK_{1,2}'}_{\sK_{1,2}'}\to\sM_{\sK_{1,2}'}$, where $\sM^{\sC/\sK_{1,2}'}_{\sK_{1,2}'}$ is the canonical log structure as in Section \ref{ss:LogPresCurve}. Denote by $Sing\{\sC/\sK_{1,2}'\}$ the set of connected nodes of $\sC$ over $\sK_{1,2}'$, and $\sN_{p}$ the canonical log structure smoothing $p\in Sing\{\sC/\sK_{1,2}'\}$. By Remark \ref{rem:DecompLogCurve}, \'etale locally we have the following decomposition 
\[\sM^{\sC/\sK_{1,2}'}\cong \sum_{p\in Sing\{\sC/\sK_{1,2}'\}}\sN_{p}.\]
This implies that to define $\sM^{\sC/\sK_{1,2}'}_{\sK_{1,2}'}\to\sM_{\sK_{1,2}'}$, it is enough to locally give $\sN_{p}\to\sM_{\sK_{1,2}'}$. 

Consider any $p\in Sing\{\sC/\sK_{1,2}'\}$ which is not a splitting node. It corresponds to a node in one of the relative log stable maps obtained by splitting from $f$. Since the two relative log stable maps have well-defined log structures, locally we have a well-define map $\sN_{p}\to \sM_{\sK_{1,2}'}$ obtained from the relative log stable map where $p$ sits. Also notice that the splitting node $p_{i}$ for any $i\in M$ only maps to the splitting divisor. Therefore, we only need to define tuples 
\begin{equation}\label{equ:Tuple}
(\sN_{i}\to \sM^{r,spl})_{i\in M}.
\end{equation}
Moreover, we need this map fitting in the following commutative diagram:
\begin{equation}\label{diag:SplitLog}
\xymatrix{
\sM^{1,spl} \ar[d] \ar[r] & \sM^{r,spl}\\
\sN_{i} \ar[ur],
}
\end{equation}
where the map $\sM^{1,spl}\to \sN_{i}$ is given in Lemma \ref{lem:compare-standard-log}. This is because that the map $\sN_{i}\to \sM^{r,spl}$ should compatible with the underlying structure of $f$. 

Conversely, we can construct an arrow of fibered categories $h:\sK_{\eta}\to \sK_{1,2}'$ as follows:
\begin{defn}\label{defn:stack-split-map}
Given an object $\xi_{\eta}\in \sK_{\eta}(T)$ over a scheme $T$, we define an object $\xi_{1,2} \in \sK'_{1,2}(T)$ consisting of the following data:
 \begin{enumerate}
  \item a predeformable map $f_{T}$, which is the underlying map of $\xi_{\eta}$
  \item log relative stable maps $f_{i}$ with discrete data $\Xi_{i}$ for $i=1,2$, which is obtained by splitting $\xi_{\eta}$ along the splitting divisor on target, and the splitting nodes on source curve $C_{T}$ of $\xi_{\eta}$;
  \item a glued log target $\sW'_{T}$, which is the log target of $\xi_{\eta}$, but removing the log structures from the non-distinguished nodes.
 \end{enumerate}
\end{defn}

\begin{lem}\label{lem:CompStack1}
The morphism $h$ is representable.
\end{lem}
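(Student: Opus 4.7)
The plan is to prove representability via the automorphism criterion: for any geometric point $\xi_\eta$ of $\sK_\eta$, I will show the induced map on automorphism groups $\mathrm{Aut}(\xi_\eta)\to\mathrm{Aut}(h(\xi_\eta))$ is injective. Since both stacks are Deligne--Mumford, this injectivity is equivalent to representability of $h$ by algebraic spaces (it is the condition that the diagonal $\sK_\eta\to\sK_\eta\times_{\sK_{1,2}'}\sK_\eta$ be a monomorphism). Concretely, given an automorphism $(\sigma,\tau)$ of $\xi_\eta=(f:(C,\sM_C)\to(W,\sM_W))$ whose image $h(\sigma,\tau)$ is trivial, I must show $(\sigma,\tau)=(\mathrm{id},\mathrm{id})$.

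Unpacking Definition \ref{defn:stack-split-map}, the functor $h$ retains the underlying predeformable map, the two relative log stable maps $f_1,f_2$ of types $\Xi_1,\Xi_2$ (with their full log structures) obtained by splitting along the distinguished nodes and the splitting divisor, and the glued log target $\sW'$ with log structure $\sM^{\sW'/\sK_{1,2}'}$ containing $\sM^{r,spl}$ along the splitting divisor. Triviality of $h(\sigma,\tau)$ therefore forces: $\sigma,\tau$ are identity on underlying schemes; $\sigma$ acts trivially on the log structure of each $f_i$, and hence on $\sM_C$ away from the distinguished nodes $p_i$; and $\tau$ acts trivially on the glued log target, hence on all of $\sM_W$. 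What remains is to verify that $\sigma$ also acts trivially at each splitting node $p_i$.

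Using the chart description of Remark \ref{rem:LogAtSing}, the source log structure near $p_i$ is presented as $(\N^2\oplus_\N\N)\oplus\N^{m+m'-1}$, with the $\N^2$-summand generated by the local node coordinates $a,b$ and the amalgamated $\N$ being the smoothing parameter $\sN_i$ on the base, tied to the $\N^2$-part via the relation $a+b=l_i\cdot e_i$. Since the log structure on each branch at $p_i$ is determined by the two split maps $f_1,f_2$, and since $\sN_i$ on the base is pinned down by its image in the target's $\sM^{r,spl}$ through Lemma \ref{lem:compare-standard-log} and the commutative diagram (\ref{diag:SplitLog}), the amalgamation relation leaves no free $\sO^*$-scaling for $\sigma$ to act on; combined with the compatibility $\tau\circ f=f\circ\sigma$ and predeformability of the underlying map (which forces the chart of $f^\flat$ at $p_i$), this rigidifies $\sigma$ to be identity at $p_i$. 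Thus $(\sigma,\tau)=(\mathrm{id},\mathrm{id})$ and $h$ is representable.

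The principal obstacle is the local rigidity check at the distinguished nodes in the last paragraph: one must make precise that no residual automorphism of the amalgamated log structure at $p_i$ survives once the two-sided component log structures (from $f_1,f_2$) and the pulled-back base log structure (from the glued log target via $\sM^{r,spl}$) are fixed. This is a chart-level verification that closely parallels the analysis of log stable maps at distinguished nodes in \cite[Section 6]{Kim}.
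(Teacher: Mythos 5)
Your proposal is correct and follows essentially the same line of reasoning as the paper's own proof: reduce to injectivity of $\mathrm{Aut}(\xi_\eta)\to\mathrm{Aut}(h(\xi_\eta))$, observe that away from the splitting nodes $\xi_\eta$ and $\xi_{1,2}$ determine one another, so any residual automorphism lives on the tuple data $(\sN_{i}\to\sM^{r,spl})_{i\in M}$, and then conclude rigidity because $\sM^{r,spl}$ is part of the glued log target (hence of $\xi_{1,2}$) and the arrows $\sN_i\to\sM^{r,spl}$, being injective and constrained by diagram (\ref{diag:SplitLog}), leave no room for a nontrivial $\sigma$. The paper phrases this contrapositively (a nontrivial automorphism of $\xi_\eta$ induces a nontrivial one on $\sM^{r,spl}$, hence on $\xi_{1,2}$) rather than directly, but the content is the same.
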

\begin{proof}
For any scheme $T$, and any objects $\xi_{1,2}\in \sK'_{1,2}(T)$ and $\xi_{\eta}\in \sK_{\eta}(T)$ such that $h(\xi_{\eta})=\xi_{1,2}$, it is enough to show that a non-trivial automorphism of $\xi_{\eta}$ induces a non-trivial automorphism of $\xi_{1,2}$. Note that $\xi_{\eta}$ gives tuples $(\sN_{i,T}\to \sM^{r,spl}_{T})_{i\in M}$ as in (\ref{equ:Tuple}) satisfying (\ref{diag:SplitLog}). Since outside the splitting nodes, the two objects $\xi_{1,2}$ and $\xi_{\eta}$ determine each other uniquely, it is enough to consider the isomorphisms of $\xi_{\eta}$ given by isomorphisms of the tuples $(\sN_{i,T}\to \sM^{r,spl}_{T})_{i\in M}$. By the commutativity of (\ref{diag:SplitLog}), any such isomorphism is an isomorphism of $\sM^{r,spl}_{T}$, which fix the arrow $\sM^{1,spl}\to\sM^{r,spl}$. But any such non-trivial isomorphisms of $\sM^{r,spl}_{T}$ induce non-trivial automorphisms of the glued log target, hence non-trivial automorphisms of $\xi_{1,2}$. This proves the lemma. 
\end{proof}

Let $\xi_{1,2}\in \sK_{1,2}'(T)$ be an object over a scheme $T$. Denote by $\sK_{\eta}(\xi_{1,2})$ the groupoid consisting of objects $\xi_{\eta}\in \sK_{\eta}(T)$ over $\xi_{1,2}$. Then we have the following result:
\begin{cor}\label{cor:CompGroupoid}
The groupoid $\sK_{\eta}(\xi_{1,2})$ is a set of all tuples $(\sN_{i,T}\to \sM^{r,spl}_{T})_{i\in M}$ satisfying (\ref{diag:SplitLog}).
\end{cor}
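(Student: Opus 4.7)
The plan is to extract from Lemma \ref{lem:CompStack1} the fact that $\sK_{\eta}(\xi_{1,2})$ is equivalent to a set, and then exhibit a natural bijection with the set of tuples $(\sN_{i,T}\to \sM^{r,spl}_{T})_{i\in M}$ satisfying (\ref{diag:SplitLog}), by unpacking the construction laid out in Section \ref{ss:DefLogAlongSplitting}. Since $h$ is representable, the fiber groupoid $\sK_{\eta}(\xi_{1,2})$ has no non-trivial automorphisms and is canonically equivalent to its set of isomorphism classes; it therefore suffices to construct a natural bijection on isomorphism classes.

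First I would build the assignment from $\sK_{\eta}(\xi_{1,2})$ to tuples. Given $\xi_{\eta}\in \sK_{\eta}(T)$ lying over $\xi_{1,2}$, the canonical log structure on the source curve decomposes, by Remark \ref{rem:DecompLogCurve}, as a sum over connected singular loci of rank-one free log structures $\sN_{p}$. For each splitting node $p_{i}$, $i\in M$, Remark \ref{rem:LogAtSing} combined with log admissibility pins down $f^{\flat}$ at $p_{i}$ to a morphism $\sN_{i}\to \sM^{r,spl}_{T}$ whose composition with the standard chart coming from $\sM^{1,spl}$ is forced by the underlying predeformable map through Lemma \ref{lem:compare-standard-log}. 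The commutativity in (\ref{diag:SplitLog}) is then automatic. Extracting these maps for each $i\in M$ yields the desired tuple.

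Conversely, given such a tuple, I would reverse the procedure of Section \ref{ss:DefLogAlongSplitting}. Set $\sM_{\sK_{1,2}'}=\sM^{nd}\oplus_{\sO_{\sK_{1,2}'}^{*}}\sM^{\sW'/\sK_{1,2}'}$, and use the two relative log stable maps $f_{1},f_{2}$ already encoded in $\xi_{1,2}$ to define the canonical arrows $\sN_{p}\to \sM_{\sK_{1,2}'}$ at every non-splitting node $p$. At each splitting node $p_{i}$, use the prescribed arrow $\sN_{i}\to \sM^{r,spl}_{T}$ to define the corresponding component. Summing these local maps over all connected singular loci assembles a morphism $\sM^{\sC/\sK_{1,2}'}_{\sK_{1,2}'}\to \sM_{\sK_{1,2}'}$; then the push-out description of Remark \ref{rem:LogAtSing} determines $\sM_{\sC}$ together with $f^{\flat}$, producing a candidate object of $\sK_{\eta}(T)$.

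The main obstacle is verifying that this candidate actually lies in $\sK_{\eta}$ and that the two constructions are mutually inverse. Concretely one must check log admissibility, the corank condition, and extended minimality, and then see that the splitting of the reconstructed object recovers $\xi_{1,2}$. Log admissibility at $p_{i}$ reduces to the simplicity of $\sN_{i}\to \sM^{r,spl}_{T}$, which follows from the chart descriptions in Section \ref{ss:LogOnK'} together with Lemma \ref{lem:compare-standard-log}. The corank identity holds because non-distinguished nodes contribute only through $\sM^{nd}$ while distinguished ones correspond to the components of $\sM^{\sW'/\sK_{1,2}'}$; extended minimality is inherited from the two relative log stable maps, since $\sM^{r,spl}_{T}$ is already a rank-one free summand governed by the discrete data $\eta$. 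Once this bookkeeping is in place, the forward and backward constructions are visibly mutually inverse, yielding the stated bijection.
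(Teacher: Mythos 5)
Your proposal is correct and takes essentially the same approach as the paper: the paper's own proof simply cites Lemma \ref{lem:CompStack1} (representability of $h$, so the fiber groupoid is equivalent to a set) together with the construction in Section \ref{ss:DefLogAlongSplitting} (the bijection between log structures at splitting nodes and tuples $(\sN_{i}\to \sM^{r,spl})_{i\in M}$ subject to the compatibility in (\ref{diag:SplitLog})). You have unpacked both ingredients in more detail, including the sanity checks (log admissibility, corank, minimality) that the paper leaves implicit, but the underlying argument is the same.
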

\begin{proof}
This follows from Lemma \ref{lem:CompStack1}, and the argument in Section \ref{ss:DefLogAlongSplitting}.
\end{proof}

\subsection{Define log structures on the glued underlying map}
Let $l_{i}=\frac{r}{c_{i}}$, for any $i\in M$. Note that both $\sM^{r,spl}$ and $\sN_{i}$ are rank $1$, locally free. Since they fit in (\ref{diag:SplitLog}), for any $\sN_{i}\to \sM^{r,spl}$, locally we have chart given by the following form:
\[
\xymatrix{
\N \ar[r]^{\times l_{i}} \ar[d] & \N \ar[d] \\
\sN_{i} \ar[r] & \sM^{r,spl}.
}
\]
Note that the map $\sN_{i}\to \sM^{r,spl}$ factors through $\sM^{r,spl}_{l_{i}}\hookrightarrow \sM^{r,spl}$. Thus, to give $\sN_{i}\to \sM^{r,spl}$ satisfying (\ref{diag:SplitLog}), it is equivalent to have an isomorphism $\sN_{i}\to \sM^{r,spl}_{l_{i}}$, which fits in the following commutative diagram:
\[
\xymatrix{
\sM^{1,spl} \ar[d] \ar[r] & \sM^{r,spl}_{l_{i}}\\
\sN_{i} \ar[ur],
}
\]
where the top arrow is induced by $\sM^{1,spl}\to \sM^{r,spl}$.

For each $i\in M$, consider the following catesian diagram:
\[
\xymatrix{
I_{i} \ar[rr] \ar[d]^{q_{i}} && \fT^{c_{i},spl}_{0} \ar[d]\\
\sK'_{1,2} \ar[rr]&& \fT^{c_{i},spl}_{0}\times_{\fT^{1,spl}_{0}}\fT^{c_{i},spl}_{0},
}
\]
where the right column is the diagonal morphism, and the bottom is given by the pair of log structures $\sN_{i}$ and $\sM^{r,spl}_{l_{i}}$. By Section \ref{sss:LogRoot}, the stack $I_{i}$ is a sheaf over $\sK'_{1,2}$ parameterizing isomorphisms between $\sN_{i}$ and $\sM^{r,spl}_{l_{i}}$.

\begin{lem}\label{lem:LogIso}
The morphism $q_{i}:I_{i}\to \sK_{1,2}'$ defined above is \'etale of degree $c_{i}$. It has a section $q'_{i}:\sK_{1,2}'\to I_{i}$, such that $\sK_{1,2}'$ forms a trivial $\sB \mu_{c_{i}}$ gerbe over $I_{i}$. Therefore, $q'_{i}$ is \'etale of degree $\frac{1}{c_{i}}$.  
\end{lem}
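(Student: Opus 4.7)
My plan is to unpack the definition of $I_i$ as a stack of isomorphisms and then produce a canonical section using the predeformability of the underlying map. First I would observe that $\fT^{c_i,spl}_0 \to \fT^{1,spl}_0$ is a $\mu_{c_i}$-gerbe (the $\mu_{c_i}$-case of Lemma \ref{lem:GlueHalfAcc}), so its relative diagonal is \'etale and represents isomorphisms of objects in the gerbe. Pulling back along $\sK'_{1,2} \to \fT^{c_i,spl}_0 \times_{\fT^{1,spl}_0} \fT^{c_i,spl}_0$ (the map induced by the pair $\sN_i$ and $\sM^{r,spl}_{l_i}$ together with their common arrow from $\sM^{1,spl}$), this gives the concrete description of $I_i$: a $T$-point is a pair $(\xi, \phi_i)$ where $\xi \in \sK'_{1,2}(T)$ and $\phi_i : \sN_{i,T} \to \sM^{r,spl}_{l_i,T}$ is an isomorphism of rank-$1$ locally free log structures making the triangle \eqref{diag:SplitLog} commute.

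I would then construct $\phi_i$ locally from the underlying predeformable map $f$ of $\xi$. In \'etale-local coordinates near $p_i$ and $f(p_i)$ we have $xy = t$ on the source and $uv = \tau^r$ on the target, with $f^\ast u = x^{c_i}$ and $f^\ast v = y^{c_i}$; this forces $t^{c_i} = \tau^r = (\tau^{l_i})^{c_i}$, hence $t = \zeta \cdot \tau^{l_i}$ for a $c_i$-th root of unity $\zeta$. The resulting identification $g_i \mapsto \zeta \cdot (l_i e)$ of generators defines $\phi_i$. Any two such local choices differ by an element of $\mu_{c_i}$ acting on the generator $l_i e$, so $I_i \to \sK'_{1,2}$ is a $\mu_{c_i}$-torsor; in particular $q_i$ is \'etale of degree $c_i$.

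For the section $q'_i$, the local ambiguity in $\zeta$ is absorbed by the $\mu_r$-inertia coming from the target log structures, as in Lemma \ref{lem:GlueHalfAcc} and Remark \ref{rem:LineBundleGlue}. This $\mu_r$ acts on the generator $e$ of $\sM^{r,spl}$ and hence on $l_i e$ via $\zeta \mapsto \zeta^{l_i}$, so two local choices of $\phi_i$ differing by $\mu_{c_i}$ define the same $T$-point of $I_i$ once one remembers the corresponding automorphism of $\xi$. The local constructions therefore glue to a global canonical section $q'_i : \sK'_{1,2} \to I_i$. The gerbe statement then follows by comparing automorphism groups along $q'_i$: an element $\zeta \in \mu_r$ preserves $\phi_i$ if and only if $\zeta^{l_i} = 1$, so the $\mu_r$-inertia from the target collapses to $\mu_{l_i}$ on $I_i$, exhibiting $\sK'_{1,2} \to I_i$ as a trivial $B(\mu_r/\mu_{l_i}) \cong B\mu_{c_i}$-gerbe of degree $1/c_i$.

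The main obstacle I anticipate is making precise the identification between the $\mu_{c_i}$ governing the local ambiguity of $\phi_i$ and the $\mu_{c_i}$-quotient of the $\mu_r$-inertia acting on $\sM^{r,spl}_{l_i}$. Pinning this identification down canonically (independently of chart choices) is exactly what allows the local choices of $\phi_i$ to assemble into the global section $q'_i$, and it relies on the explicit chart-level description of $\sM^{r,spl}$ and the predeformability relation $\tau^r = t^{c_i}$ recalled in Remarks \ref{rem:LogAtSing} and \ref{rem:LineBundleGlue}.
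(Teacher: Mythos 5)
The description you give of $I_i$ as the sheaf of isomorphisms $\phi_i\colon\sN_{i,T}\to\sM^{r,spl}_{l_i,T}$ fitting into~\eqref{diag:SplitLog}, and the inertia computation $\mu_r\twoheadrightarrow\mu_{c_i}$, $\zeta\mapsto\zeta^{l_i}$, are both correct and consistent with the paper. Your route, however, is genuinely different from the paper's: the paper works abstractly with the isomorphism $\fT^{c_i,spl}_0\times_{\fT^{1,spl}_0}\fT^{c_i,spl}_0\cong\sB\mu_{c_i}\times\fT^{c_i,spl}_0$ and obtains both the degree and the section directly from the diagonal and the projection of this trivial gerbe product; it never invokes the predeformability of $f$, the root-of-unity computation, or Remark~\ref{rem:LogAtSing}. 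You instead attempt an explicit chart-level construction of the section.

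The explicit construction has a genuine gap. Over $\sK'_{1,2}$ the target always has a persistent splitting divisor (we are over $\fT^{r,spl}_0$) and the glued curve always has the node $p_i$ (it is produced by gluing the two universal sections), so the images of the generators of $\sN_i$ and $\sM^{r,spl}$ under the structure maps to $\sO_{\sK'_{1,2}}$ are both identically zero. In your notation $t=\tau=0$, and the relation $t^{c_i}=\tau^r$ is the tautology $0=0$; it imposes no constraint and certainly does not produce a distinguished $\zeta\in\mu_{c_i}$ with $t=\zeta\tau^{l_i}$. (The chart in Remark~\ref{rem:LogAtSing} that you appeal to describes a log prestable map, where the morphism $\sN_i\to\sM^{r,spl}_{l_i}$ is \emph{already part of the data}; on $\sK'_{1,2}$ that piece of log structure is precisely what has yet to be specified, which is the entire point of $I_i$.) Consequently there is no canonical local choice of $\phi_i$, only a $\mu_{c_i}$-torsor of choices, and the subsequent gluing argument ``the ambiguity is absorbed by the $\mu_r$-inertia'' is not forced by anything established: the transitivity of $\mu_r$ on the fibers of $I_i\to\sK'_{1,2}$ explains the degree $c_i$ and the collapse of inertia to $\mu_{l_i}$, but a transitive action does not single out a basepoint, which is what a section of $q_i$ requires. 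You should instead follow the paper's line: identify $\Delta\colon\fT^{c_i,spl}_0\to\fT^{c_i,spl}_0\times_{\fT^{1,spl}_0}\fT^{c_i,spl}_0$ with the canonical base-point inclusion into the trivial $\sB\mu_{c_i}$-gerbe product, and produce $q'_i$ by pulling back the second projection, rather than trying to manufacture $\phi_i$ out of the (vanishing) smoothing parameters of the underlying predeformable map.
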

\begin{proof}
The first statement follows from $\fT^{c_{i},spl}_{0}\times_{\fT^{1,spl}_{0}}\fT^{c_{i},spl}_{0}\cong \sB \mu_{c_{i}}\times \fT^{c_{i},spl}_{0}$. By pulling back the projection $\sB \mu_{c_{i}}\times \fT^{c_{i},spl}_{0}\to \fT^{c_{i},spl}_{0}$, we obtain the natural section $q_{i}'$ of $q_{i}$, which is \'etale of degree $c_{i}^{-1}$. This proves the second statement. 
\end{proof}

Consider $\sK_{1,2}'':=\prod_{i\in M} I_{i}$, where the product is taking over $\sK_{1,2}'$. We have the following morphism:
\begin{equation}\label{equ:remove-split-log}
q:=\prod_{i\in M}q_{i}: \sK_{1,2}'' \to \sK_{1,2}'.
\end{equation}
By Lemma \ref{lem:LogIso}, the map $q$ is \'etale of degree $\prod_{i\in M}c_{i}$, with a section 
\begin{equation}\label{equ:equip-split-log}
q':=\prod_{i\in M}q'_{i}: \sK_{1,2}'\to \sK_{1,2}'',
\end{equation}
which is \'etale of degree $(\prod_{i\in M}c_{i})^{-1}$.

\begin{rem}\label{rem:LogOverCoarse}
Let us give the moduli interpretation of $\sK_{1,2}''$ relative to $\sK_{1,2}'$. Given a morphism $T\to \sK_{1,2}'$ from a scheme, the stack $\sK_{1,2}''$ associates to $T\to \sK_{1,2}'$ a set of tuples $(\sN_{i,T}\to\sM^{r,spl}_{T})_{i\in M}$ such that:
\begin{enumerate}
 \item For each $i\in M$, the map $\sN_{i,T}\to\sM^{r,spl}_{T}$ fits into (\ref{diag:SplitLog}).
 \item For each $i\in M$, the map $\sN_{i,T}\to\sM^{r,spl}_{T}$ is locally given by the following chart
 \[
 \xymatrix{
 \N \ar[r]^{\times l_{i}} \ar[d] & \N \ar[d] \\
 \sN_{i,T} \ar[r] & \sM^{r,spl}_{T}.
 }
 \]
\end{enumerate}
Therefore, by the argument in Section \ref{ss:DefLogAlongSplitting}, we have a universal family of log stable maps $f_{\sK_{1,2}''}$ over $\sK_{1,2}''$. This family induces a map  $\lambda:\sK_{1,2}''\to \sK_{\eta}$.
\end{rem}

Combining Remark \ref{rem:LogOverCoarse} and Corollary \ref{cor:CompGroupoid}, we obtain:
\begin{prop}
The map $\lambda$ is an isomorphism.
\end{prop}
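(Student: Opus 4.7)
The plan is to verify that $\lambda$ is an isomorphism by comparing $\sK''_{1,2}$ and $\sK_{\eta}$ as stacks over $\sK'_{1,2}$. Since $h:\sK_{\eta}\to\sK'_{1,2}$ is representable by Lemma \ref{lem:CompStack1}, and the structure map $\sK''_{1,2}=\prod_{i\in M} I_{i}\to\sK'_{1,2}$ is representable by construction, it suffices to show that for every scheme $T$ and every object $\xi_{1,2}\in\sK'_{1,2}(T)$, the map $\lambda$ induces a bijection between the fibers of $\sK''_{1,2}$ and $\sK_{\eta}$ over $\xi_{1,2}$, and that this bijection is natural in $T$.

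By Corollary \ref{cor:CompGroupoid}, the fiber $\sK_{\eta}(\xi_{1,2})$ is the set of tuples $(\sN_{i,T}\to\sM^{r,spl}_{T})_{i\in M}$ such that for every $i\in M$ the diagram (\ref{diag:SplitLog}) commutes. On the other side, by the definition of $I_{i}$ together with the identification of $\sM^{r,spl}_{l_{i}}$ as the rank $1$ locally free sub-log-structure of $\sM^{r,spl}$ generated by $l_{i}\cdot e$, the fiber of $\sK''_{1,2}$ over $\xi_{1,2}$ is the set of tuples of isomorphisms $(\sN_{i,T}\stackrel{\sim}{\to}\sM^{r,spl}_{l_{i},T})_{i\in M}$ compatible with the two maps from $\sM^{1,spl}$. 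The moduli description of $\sK''_{1,2}$ given in Remark \ref{rem:LogOverCoarse} is precisely this, and the map $\lambda$ is defined on fibers by composing such an isomorphism with the inclusion $\sM^{r,spl}_{l_{i}}\hookrightarrow\sM^{r,spl}$.

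The main step is then the bijection between the two fibers. The key observation, already foreshadowed in the discussion preceding Lemma \ref{lem:LogIso}, is that any morphism $\sN_{i,T}\to\sM^{r,spl}_{T}$ fitting in diagram (\ref{diag:SplitLog}) automatically has local chart $\N\xrightarrow{\times l_{i}}\N$ and thus factors uniquely through $\sM^{r,spl}_{l_{i},T}$; moreover the induced arrow $\sN_{i,T}\to\sM^{r,spl}_{l_{i},T}$ is an isomorphism because both log structures are rank $1$ locally free and the arrow sends a local generator to a local generator. Conversely any isomorphism $\sN_{i,T}\cong\sM^{r,spl}_{l_{i},T}$ composed with $\sM^{r,spl}_{l_{i},T}\hookrightarrow\sM^{r,spl}_{T}$ furnishes an arrow satisfying (\ref{diag:SplitLog}). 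These two constructions are mutually inverse, giving the desired bijection.

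The part that needs a little care -- and which I expect to be the only genuine obstacle -- is the compatibility of this fiberwise bijection with arbitrary morphisms $T'\to T$ and with automorphisms, i.e.\ checking that $\lambda$ is a morphism of fibered categories and that the inverse just described is also functorial. This is straightforward once one notes that the factorization through $\sM^{r,spl}_{l_{i}}$ is characterized by the commutativity of (\ref{diag:SplitLog}) and the local form of the chart, both of which are preserved by pullback of log structures. Together with the representability of $h$, this upgrades the fiberwise bijection to an equivalence of stacks, proving that $\lambda$ is an isomorphism.
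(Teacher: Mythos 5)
Your proof is correct and follows essentially the same route as the paper, whose ``proof'' is the single sentence that the proposition follows by combining Remark~\ref{rem:LogOverCoarse} with Corollary~\ref{cor:CompGroupoid}; your write-up fills in what that combination means, namely that condition~(1) of the remark forces condition~(2) (the chart $\N\xrightarrow{\times l_i}\N$, already noted in the paragraph preceding Lemma~\ref{lem:LogIso}), so the two fiberwise moduli descriptions coincide, and then observes that the identification is functorial.
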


We thus identify $\sK_{1,2}''$ with $\sK_{\eta}$. Note that the map $q'$ in (\ref{equ:equip-split-log}) is \'etale. Consider $\E_{\sK_{1,2}'/\fT^{r,spl}_{0}}=(q')^{*}\E_{\sK_{\eta}/\fT^{r,spl}_{0}}$. Then $\E_{\sK_{1,2}'/\fT^{r,spl}_{0}}$ gives a perfect obstruction theory of $\sK_{1,2}'$ relative to $(\fT^{r,spl}_{0})^{ext}$. Denote by $[\sK_{1,2}']$ the virtual fundamental class of $\sK_{1,2}'$ defined by $\E_{\sK_{1,2}'/\fT^{r,spl}_{0}}$. Then we have:

\begin{cor}\label{cor:glue-curve}
\[q_{*}[\sK_{\eta}]=\big(\prod_{i\in M}c_{i}\big)[\sK_{1,2}'].\]
\end{cor}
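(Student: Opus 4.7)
The plan is to combine the preceding proposition (identifying $\sK_{1,2}'' \cong \sK_\eta$ via $\lambda$) with the étale functoriality of virtual fundamental classes. Through $\lambda$, the morphism $q$ becomes an étale morphism $\sK_\eta \to \sK_{1,2}'$ of degree $d := \prod_{i \in M} c_i$, since $\sK_{1,2}'' = \prod_{i \in M} I_i$ (fiber product over $\sK_{1,2}'$) and each $q_i : I_i \to \sK_{1,2}'$ is étale of degree $c_i$ by Lemma \ref{lem:LogIso}. The section $q'$ is the corresponding étale section with $q \circ q' = \mathrm{id}$.

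The key step I would carry out is the canonical identification
\[
q^* \E_{\sK_{1,2}'/\fT^{r,spl}_0} \;\cong\; \E_{\sK_\eta/\fT^{r,spl}_0}.
\]
Since $q$ is étale and, by the moduli interpretation in Remark \ref{rem:LogOverCoarse}, the universal relative log stable map on $\sK_\eta \cong \sK_{1,2}''$ is obtained by pulling back the universal family on $\sK_{1,2}'$ together with the tautological tuples of isomorphisms $\sN_i \cong \sM^{r,spl}_{l_i}$ parametrized along the fibers of $q$, this identification follows from Proposition \ref{prop:PerObsBaseChange} applied to the étale base change along $q$. This is consistent with the definition $\E_{\sK_{1,2}'/\fT^{r,spl}_0} := (q')^* \E_{\sK_\eta/\fT^{r,spl}_0}$, because $q \circ q' = \mathrm{id}$ gives $(q')^* q^* \E_{\sK_{1,2}'} = \E_{\sK_{1,2}'}$.

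With this identification, étale functoriality of the virtual class (Behrend--Fantechi, Costello, Manolache) yields $q^* [\sK_{1,2}'] = [\sK_\eta]$. Applying $q_*$ and invoking the projection formula $q_* q^* = d \cdot \mathrm{id}$ on Chow groups for a finite étale morphism of degree $d$ between Deligne--Mumford stacks, I conclude
\[
q_* [\sK_\eta] \;=\; q_* q^* [\sK_{1,2}'] \;=\; \Big(\prod_{i \in M} c_i\Big)\, [\sK_{1,2}'],
\]
as claimed. The \textbf{main obstacle} is the first identification: showing that $\E_{\sK_\eta}$ is genuinely the $q$-pullback of $\E_{\sK_{1,2}'}$, and not merely abstractly equivalent after further pullback by $q'$. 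This requires carefully tracking the log cotangent complexes at the splitting nodes in the construction of Section \ref{ss:ConstructVir} and checking that the base-change isomorphism of Proposition \ref{prop:PerObsBaseChange} really does apply to the étale cover $q$, given that the two obstruction theories are defined in asymmetric ways (one via universal log maps, the other via an ad hoc pullback along the section $q'$).
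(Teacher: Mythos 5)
Your argument follows the same route as the paper's, which (in a one-line proof) simply cites the definition $q=\prod_{i\in M}q_i$ from (\ref{equ:remove-split-log}) and the degrees in Lemma \ref{lem:LogIso}; you have correctly recognized that the real content is the identification $q^*\E_{\sK_{1,2}'/\fT^{r,spl}_0}\cong\E_{\sK_\eta/\fT^{r,spl}_0}$, which the paper leaves implicit. However, your invocation of Proposition \ref{prop:PerObsBaseChange} at this step is not quite right: that proposition governs base change along a morphism $T\to\sB$ of the \emph{target} stack, whereas $q:\sK_\eta\to\sK_{1,2}'$ is an \'etale cover of the moduli stack over the \emph{same} target $\fT^{r,spl}_0$. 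In particular the universal log stable map on $\sK_\eta$ is \emph{not} the $q$-pullback of the one on $\sK_{1,2}'$ (only the $q'$-pullback goes the other way), so the cited proposition does not directly apply, and $q^*(q')^*\E_{\sK_\eta}=\E_{\sK_\eta}$ is not formal: a pullback along a section followed by pullback along the cover need not recover the original complex.

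What closes the gap is to show that $\E_{\sK_\eta}$ itself descends along $q$, i.e.\ is of the form $q^*F$ for some complex $F$ on $\sK_{1,2}'$. Granting this, $\E_{\sK_{1,2}'}=(q')^*q^*F=F$ by $q\circ q'=\mathrm{id}$, hence $q^*\E_{\sK_{1,2}'}\cong\E_{\sK_\eta}$, which is exactly what you need. The descent holds because $q$ is a $\prod_{i\in M}\mu_{c_i}$-torsor in which $\mu_{c_i}$ acts by rescaling the universal isomorphism $\sN_i\cong\sM^{r,spl}_{l_i}$ by a root of unity, and the complex $\BL_\square$ entering the construction of $\E$ depends on the parametrized data only through the log differentials $\Omega^{log}_{\sC/\sK_\eta}$; near a splitting node these are generated by $d\log$-classes, which are insensitive to multiplying a local generator of the log structure by a unit. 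Hence $\Omega^{log}_{\sC/\sK_\eta}$, and with it $\E_{\sK_\eta}$, carries a canonical $\prod_{i\in M}\mu_{c_i}$-equivariant structure and descends. With this established, your pullback and projection-formula argument is exactly right, and in fact supplies the justification the paper's proof omits.
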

\begin{proof}
This follows from (\ref{equ:remove-split-log}) and Lemma \ref{lem:LogIso}. 
\end{proof}


\subsection{Comparison of virtual fundamental classes of $\sK_{1,2}'$ and $\sK_{1,2}$}
Note that the section $q':\sK_{1,2}'\to\sK_{\eta}$ gives a family of log stable maps $f_{\sK_{1,2}'}$ over $\sK_{1,2}'$, and the perfect obstruction theory $\E_{\sK_{1,2}'/\fT^{r,spl}_{0}}$ can be obtained by applying the construction in Section \ref{ss:ConstructVir} to the log stable map $f_{\sK_{1,2}'}$. The underlying map of $f_{\sK_{1,2}'}$ coincide with the glued predeformable map $f$ as in (\ref{diag:UndGlue}). Denote by $\BL_{\square}$ the complex constructed in Section \ref{ss:ConstructVir} with respect to the log stable map $f_{\sK_{1,2}'}$ on $\sK_{1,2}'$. Let $\BL_{\square_{i}}$ be the corresponding complex with respect to the relative log stable map $f'_{i}$ on $\sK_{1,2}$ with data $\Xi_{i}$, for $i=1,2$.

\begin{prop}\label{prop:vir-glue-tw}
Consider the diagram (\ref{diag:GlueUnderMap}). Then we have $\Delta^{!}([\sK_{1,2}])=[\sK_{1,2}']$.
\end{prop}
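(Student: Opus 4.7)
The approach is to apply a virtual pullback formula (e.g.\ Manolache's theorem, or Behrend--Fantechi's original compatibility) to the cartesian diagram (\ref{diag:GlueUnderMap}). Since $D$ is smooth, the diagonal $\Delta$ is a regular embedding of smooth Deligne--Mumford stacks with conormal bundle $\Omega_{D^{|M|}}$, so the refined Gysin map $\Delta^{!}$ is defined. The equality $\Delta^{!}[\sK_{1,2}] = [\sK_{1,2}']$ will follow once we produce a morphism
\[
(v')^{*}\E_{\sK_{1,2}/\fT^{r,spl}_{0}} \longrightarrow \E_{\sK_{1,2}'/\fT^{r,spl}_{0}}
\]
compatible with the morphism of log cotangent complexes induced by $v'$, and fitting into a distinguished triangle whose third term is $(\Bev_{\eta}')^{*}\Omega_{D^{|M|}}[1]$.

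The key geometric input is that by diagram (\ref{diag:UndGlue}) the universal curve $q:\sC \to \sK_{1,2}'$ is the pushout of $q_{i}':\sC_{i}' \to \sK_{1,2}'$ for $i=1,2$ along $\sG = \coprod_{i \in M}\Sigma_{i}$, so we have the short exact sequence
\[
0 \to \sO_{\sC} \to \sO_{\sC_{1}'} \oplus \sO_{\sC_{2}'} \to \sO_{\sG} \to 0.
\]
I would apply the corresponding Mayer--Vietoris distinguished triangle to $\BL_{\square} \otimes \omega_{q}[-1]$ on $\sC$. Away from the distinguished nodes one has $\BL_{\square}|_{\sC_{i}'} \cong \BL_{\square_{i}}$ and $\omega_{q}|_{\sC_{i}'} \cong \omega_{q_{i}'}(\sG \cap \sC_{i}')$ because $f|_{\sC_{i}'} = f_{i}'$ as log maps after accounting for the marked points indexed by $M$; pushing forward and summing these two contributions recovers $(v')^{*}\E_{\sK_{1,2}/\fT^{r,spl}_{0}}$. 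On the gluing locus $\sG$, a local calculation using the chart description of Remark \ref{rem:LogAtSing} identifies the restriction $(\BL_{\square} \otimes \omega_{q}[-1])|_{\sG}$ with $(f|_{\sG})^{*}\Omega^{log}_{\sD/\sK_{1,2}'}$, whose pushforward along $p_{\sG}:\sG \to \sK_{1,2}'$ is $(\Bev_{\eta}')^{*}\Omega_{D^{|M|}}$.

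Taking $\mathrm{R}q_{*}$ of the Mayer--Vietoris triangle and keeping track of shifts then produces the desired triangle of POTs. The compatibility with the cotangent complex triangle of $v'$ follows from the naturality of the construction in Section \ref{ss:ConstructVir}, after which Manolache's virtual pullback formula delivers the proposition. The main technical obstacle will be the identification of $(\BL_{\square}\otimes\omega_{q}[-1])|_{\sG}$ with the pullback of the conormal bundle of the diagonal: this requires tracking the log structures $(\sN_{i} \to \sM^{r,spl})_{i\in M}$ constructed in Section \ref{ss:LogOnK'} through the pushout of log curves, and matching the local chart at each splitting node with the chart of the target along the splitting divisor given in Remark \ref{rem:LogAtSing}.
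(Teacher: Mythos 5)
Your overall strategy --- a normalization/Mayer--Vietoris triangle on the universal curve $\sC$ together with the Behrend--Fantechi compatibility criterion (the paper cites \cite[5.10]{Behrend-Fantechi}, which is the same content as Manolache's virtual pullback) --- is exactly the paper's approach, and the target triangle you aim for is the right one. There is, however, a real gap in the middle step, and a sign problem in the direction of the arrows, and they have the same source.

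You tensor the SES
\[
0 \to \sO_{\sC} \to v_{*}(\sO_{\sC_{1}'} \oplus \sO_{\sC_{2}'}) \to \iota_{*}\sO_{\sG} \to 0
\]
with $\BL_{\square}\otimes\omega_{q}[-1]$ and push forward by $q$. Since $\omega_{q}|_{\sC_{i}'}\cong\omega_{q_{i}'}(\sG\cap\sC_{i}')$, as you correctly note, the middle term of the pushed-forward triangle is
\[
\bigoplus_{i} Rq_{i*}\bigl(\BL_{\square_{i}}\otimes\omega_{q_{i}'}(\sG\cap\sC_{i}')[-1]\bigr),
\]
which is \emph{not} $(v')^{*}\E_{\sK_{1,2}/\fT^{r,spl}_{0}}=\bigoplus_{i} Rq_{i*}(\BL_{\square_{i}}\otimes\omega_{q_{i}'}[-1])$: the extra $\sO(\sG\cap\sC_{i}')$ twist does not go away. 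Moreover, the triangle you produce reads $\E_{\sK_{1,2}'}\to(\text{middle})\to(\text{gluing term})\to[1]$, whereas the Behrend--Fantechi criterion needs the map to point \emph{into} $\E_{\sK_{1,2}'}$, i.e.\ the triangle $(v')^{*}\E_{\sK_{1,2}}\to\E_{\sK_{1,2}'}\to\Bev_{\eta}'^{*}\BL_{\Delta}\to[1]$.

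The paper sidesteps both issues at once by running the normalization triangle on the \emph{dual} complex $\BL_{\square}^{\vee}$, where the per-component restrictions are untwisted ($Lv_{i}^{*}\BL_{\square}^{\vee}\cong\BL_{\square_{i}}^{\vee}$) and $L\iota^{*}\BL_{\square}^{\vee}\cong(f\circ\iota)^{*}T_{D}[-1]$; after $Rp_{*}$, one dualizes (which reverses the arrows, producing the correct orientation) and the formula $\E=(q_{*}\BL_{\square}^{\vee})^{\vee}[-1]$ then supplies the $\omega_{q}$-twist component by component via Serre duality on each $\sC_{i}'$, with no stray $\sO(\sG)$. If you insist on working with $\BL_{\square}\otimes\omega_{q}[-1]$ as written, the fix is to use the other short exact sequence of the normalization,
\[
0 \to v_{*}\bigl(\sO_{\sC_{1}'}(-\sG)\oplus\sO_{\sC_{2}'}(-\sG)\bigr) \to \sO_{\sC} \to \iota_{*}\sO_{\sG} \to 0,
\]
so that the $(-\sG)$ twist cancels the $\sO(\sG)$ coming from $\omega_{q}|_{\sC_{i}'}$, and the arrows then point the right way. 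As written, though, the sentence "pushing forward and summing these two contributions recovers $(v')^{*}\E_{\sK_{1,2}/\fT^{r,spl}_{0}}$" is not justified and is in fact false for the triangle you chose.

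One smaller point: your identification of the gluing term should land on $(\Bev_{\eta}')^{*}\Omega_{D^{|M|}}[1]\cong(\Bev_{\eta}')^{*}\BL_{\Delta}$, i.e.\ the plain (not log) cotangent of $D$ pulled back, concentrated in degree $-1$ after pushing forward; the paper obtains this as the dual of $L\iota^{*}\BL_{\square}^{\vee}=(f\circ\iota)^{*}T_{D}[-1]$. Referring to $\Omega^{log}_{\sD/\sK_{1,2}'}$ is likely to confuse the shift bookkeeping unless you spell out why the log structure on $\sD$ contributes nothing.
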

\begin{proof}
This proof of this proposition is very similar to \cite[5.8.1]{AF}. By \cite[5.10]{Behrend-Fantechi}, it suffices to produce a diagram of distinguished triangles
\begin{equation}\label{diag:Compdeformation}
\xymatrix{
v^{*}\E_{\sK_{1,2}/\fT^{tw,spl}_{0}} \ar[d] \ar[r] & \E_{\sK_{\eta}/\fT^{tw,spl}_{0}} \ar[d] \ar[r] & \Bev_{\eta}^{*}\BL_{\Delta} \ar[d] \ar[r]^{\ \ \ [1]} & {}\\ 
v^{*}\BL^{log}_{\sK_{1,2} / \fT_{0}^{tw,spl}}  \ar[r] & \BL^{log}_{\sK_{\eta}/\fT^{tw,spl}_{0}}  \ar[r] & \BL_{\sK_{\eta}/\sK_{1,2}}  \ar[r]^{ \ \ \ \ [1]} & {}\\ 
}
\end{equation}
Since the map $\Delta$ in (\ref{diag:GlueUnderMap}) is a regular embedding, we have $\BL_{\Delta}\cong N_{\Delta}^{\vee}[1]$. 

Consider (\ref{diag:UndGlue}). Denote by $v:\sC_{1}'\coprod\sC_{2}'\to\sC$ the normalization map, and $\iota:\sG\to \sC$ the embedding. We have the following standard normalization triangle
\begin{equation}\label{diag:OnFamily}
\BL_{\square}^{\vee} \to \upsilon_{*}L\upsilon^{*}\BL_{\square}^{\vee} \to \iota_{*}L\iota^{*}\BL_{\square}^{\vee} \stackrel{[1]}{\rightarrow}
\end{equation}
and a natural decomposition
\[\upsilon_{*}L\upsilon^{*}\BL_{\square}^{\vee} = \upsilon_{1*}L\upsilon^{*}_{1}\BL_{\square}^{\vee} \oplus \upsilon_{2*}L\upsilon^{*}_{2}\BL_{\square}^{\vee}.\]
\end{proof}

\begin{lem}
\[L\upsilon^{*}_{1}\BL_{\square}^{\vee}=\BL_{\square_{1}}^{\vee}, \mbox{\ \ \ } L\upsilon^{*}_{2}\BL_{\square}^{\vee} = \BL_{\square_{2}}^{\vee}\]
and 
\[L\iota^{*}\BL_{\square}^{\vee}=(f\circ \iota)^{*} T_{D}[-1].\]
\end{lem}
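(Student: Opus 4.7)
The plan is to verify both identifications pointwise, reducing everything to a local computation at the distinguished nodes $\sG \subset \sC$. Away from $\sG$, the map $\upsilon_i$ is an open immersion and $\iota$ has empty image, so the statements follow tautologically from the defining triangle of $\BL_\square$. The main input is the explicit chart at a distinguished node supplied by Remark \ref{rem:LogAtSing}.

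First I would establish $L\upsilon_i^{*}\BL_{\square}^{\vee}\cong\BL_{\square_i}^{\vee}$ for $i=1,2$. Writing the defining triangle for $\BL_\square$ as in Section \ref{ss:ConstructVir} together with its analogue for $f_i'$, and noting that both relative log cotangent sheaves are locally free (by log smoothness of the source curves and targets over their bases), it suffices to produce compatible isomorphisms
\[\upsilon_i^{*}\Omega_{\sC/\sK_{1,2}'}^{log}\cong\Omega_{\sC_i'/\sK_{1,2}}^{log}\qquad\text{and}\qquad\upsilon_i^{*}f^{*}\Omega_{\sW'/\fT_{0}^{r,spl}}^{log}\cong(f_i')^{*}\Omega_{\sX_i'/\sT^{tw}}^{log}.\]
On the source side, at a distinguished node the chart $\N^{2}\oplus_{\N}\N=\langle a,b,e'\mid a+b=l_i e'\rangle$ shows that relative $\Omega_{\sC/\sK_{1,2}'}^{log}$ is locally free of rank one, generated by $d\log a$ after moding out the pullback of $d\log e'$ from the base. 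Pulling back via $\upsilon_1$ to the marking on $\sC_1'$, where the log structure is the standard divisorial one with generator $a$, yields the rank-one sheaf generated by $d\log a$, which identifies with $\Omega_{\sC_1'/\sK_{1,2}}^{log}$. A parallel computation using the target chart $\N^{2}\oplus_{\N}\N=\langle x',y',e''\mid x'+y'=r e''\rangle$ and the closed immersion $\sX_i'\hookrightarrow\sW'$ (along which the nodal log structure on $\sW'$ restricts to the divisorial log structure from $\sD$) yields the target-side identification. Since dualization and taking cones are compatible under these isomorphisms, the induced map $L\upsilon_i^{*}\BL_\square\to\BL_{\square_i}$ is a quasi-isomorphism.

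For the second identity $L\iota^{*}\BL_{\square}^{\vee}=(f\circ\iota)^{*}T_{D}[-1]$: dualizing the defining triangle gives a fiber sequence $\BL_{\square}^{\vee}\to T^{log}_{\sC/\sK_{1,2}'}\to f^{*}T^{log}_{\sW'/\fT_{0}^{r,spl}}$, which I pull back to $\sG$. At a node $p\in\sG$ mapping to $\sD$, the stalk $\iota^{*}T^{log}_{\sC}$ is one-dimensional, generated by the smoothing direction dual to $d\log a$, while $\iota^{*}f^{*}T^{log}_{\sW'}$ has rank $\dim\sW'-1$, decomposed into the smoothing direction dual to $d\log x'$ together with the $\dim D$ directions tangent to $\sD$. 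By the chart formula in Remark \ref{rem:LogAtSing}, $df$ sends the generator of $\iota^{*}T^{log}_{\sC}$ to $c_i$ times the smoothing generator of $\iota^{*}f^{*}T^{log}_{\sW'}$, so the map is injective with cokernel canonically $T_{\sD}\cong(f\circ\iota)^{*}T_{D}$. The long exact sequence in cohomology then forces $H^{0}(L\iota^{*}\BL_{\square}^{\vee})=0$ and $H^{1}(L\iota^{*}\BL_{\square}^{\vee})=(f\circ\iota)^{*}T_{D}$, giving the claim.

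The main technical obstacle is comparing $\upsilon_i^{*}\sM_{\sC}$ with the native log structure on $\sC_i'$ at the distinguished node, since these log structures differ as monoids; the isomorphism of \emph{relative} log differentials must be extracted by tracking carefully the contributions of the base log structures $\sK_{1,2}'$ versus $\sK_{1,2}$, and analogously on the target side the chart $\N^{2}\oplus_{\N}\N$ for $\sW'/\fT_{0}^{r,spl}$ must be compared with the chart $\N$ for the divisor on $\sX_i'/\sT^{tw}$ along the embedding $\sX_i'\hookrightarrow\sW'$.
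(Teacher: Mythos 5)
Your proposal is correct and follows the same strategy as the paper's proof: reduce to a local check, observe that the statement is immediate away from the splitting nodes, and at a splitting node verify the isomorphisms directly from the chart supplied in Remark \ref{rem:LogAtSing}. Where the paper outsources the local computation to the transversal-map calculation of \cite[A.2, 5.8.2]{AF}, you carry it out explicitly in the log setting (tracking that $d\log a$ generates $\upsilon_i^{*}\Omega^{log}_{\sC/\sK_{1,2}'}$ and matches the divisorial generator on $\sC_i'$, and that $df$ on log tangents is multiplication by $c_i$ on the smoothing direction with cokernel $(f\circ\iota)^{*}T_{D}$), which is a useful self-contained substitute for the citation.
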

\begin{pfoflem}
Note that the underlying map $f$ is obtained by gluing the two underlying maps $\underline{f}_{1}'$ and $\underline{f}_{2}'$ along the marked point as in (\ref{diag:UndGlue}). Away from the splitting nodes, the log structure on $f_{\sK_{1,2}'}$ is obtained by pulling-back that from $f_{i}'$, locally we have a canonical isomorphism $L\upsilon^{*}_{i}\BL_{\square}^{\vee}\cong\BL_{\square_{i}}^{\vee}$. 

Near the splitting nodes, we can directly check that the log differentials of the log maps $f_{\sK_{1,2}'}$ and $f_{i}'$ are identical to the sheaf of differentials of transversal maps as in \cite[A.2]{AF}. Therefore, the same argument as in \cite[5.8.2]{AF} shows that locally near the splitting nodes, there are canonical isomorphisms $L\upsilon^{*}_{i}\BL_{\square}^{\vee}\cong\BL_{\square_{i}}^{\vee}$. We check that those local isomorphisms can be glued together to give a global one $L\upsilon^{*}_{i}\BL_{\square}^{\vee}=\BL_{\square_{i}}^{\vee}$. 

For the same reason, we have $L\iota^{*}\BL_{\square}^{\vee}=(f\circ \iota)^{*} T_{D}[-1]$. This finishes the proof of the lemma.
\end{pfoflem}

By (\ref{diag:OnFamily}) and the above lemma, we have:
\begin{equation}\label{diag:OnFamily2}
\BL_{\square}^{\vee} \to \upsilon_{1*}\BL_{\square_{1}}^{\vee} \oplus \upsilon_{1*}\BL_{\square_{2}}^{\vee} \to \iota_{*}(f\circ \iota)^{*} T_{D} \stackrel{[1]}{\rightarrow}
\end{equation}
Denote by $p:\sC \to \sK_{1,2}'$ and $p_{i}:\sC'_{i}\to \sK_{1,2}'$ the canonical projections. Note that we have
\[p_{*}\iota_{*}(f\circ \iota)^{*} T_{D} = (\Bev'_{\eta})^{*}N_{\Delta}.\]
By applying $Rp_{*}$, dualizing and rotate (\ref{diag:OnFamily2}) we have
\[(Rp_{1*}\BL_{\square_{1}}^{\vee})^{\vee}[-1]\oplus(Rp_{2*}\BL_{\square_{2}}^{\vee})^{\vee}[-1]\to (Rp_{*}\BL_{\square}^{\vee})^{\vee}[-1]\to \Bev_{\eta}^{*'}N_{\Delta}[1]\stackrel{[1]}{\rightarrow}.\] 
This triangle fits into (\ref{diag:Compdeformation}) as required.  \\

Combining Corollary \ref{cor:glue-curve} and \ref{prop:vir-glue-tw}, we have the following:
\begin{cor}
\[q'_{*}[\sK_{\eta}]=\big(\prod_{i\in M}c_{i}\big)\Delta^{!}([\sK_{1,2}]).\]
\end{cor}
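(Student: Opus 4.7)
The plan is to derive this corollary as an immediate consequence of the two results just established. First I would invoke Corollary~\ref{cor:glue-curve}, which (after the identification $\sK_{1,2}'' \cong \sK_\eta$ from Remark~\ref{rem:LogOverCoarse}) reads
\[
q_{*}[\sK_\eta] = \Big(\prod_{i\in M} c_i\Big)[\sK_{1,2}'].
\]
Next I would substitute the identity $[\sK_{1,2}'] = \Delta^{!}([\sK_{1,2}])$ furnished by Proposition~\ref{prop:vir-glue-tw}, where $\Delta$ is the diagonal morphism of diagram~(\ref{diag:GlueUnderMap}). Chaining these two equalities yields exactly the statement of the corollary, with $q'$ read as the pushforward along the \'etale covering $q:\sK_\eta \to \sK_{1,2}'$ of equation~(\ref{equ:remove-split-log}).

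The one consistency check I would perform is that the two virtual fundamental classes $[\sK_\eta]$ and $[\sK_{1,2}']$ are computed with respect to compatible perfect obstruction theories. This is automatic from the construction: in the paragraph preceding Corollary~\ref{cor:glue-curve}, the obstruction theory on $\sK_{1,2}'$ was defined by \'etale pullback as $\E_{\sK_{1,2}'/\fT^{r,spl}_{0}} = (q')^{*}\E_{\sK_\eta/\fT^{r,spl}_{0}}$, which is precisely what causes the \'etale degree of $q$ to appear as the factor $\prod_{i\in M} c_i$ on the right-hand side.

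I expect no genuinely new difficulty to arise at this step; the substantial geometric input was carried out in Proposition~\ref{prop:vir-glue-tw} (the normalization-triangle computation identifying $\BL_{\square}^{\vee}$ with the direct sum of the two $\BL_{\square_i}^{\vee}$ modulo the normal contribution $(f\circ \iota)^{*}T_D$) and in Lemma~\ref{lem:LogIso} (the \'etale/$\mu$-gerbe description of $q$). The main role of this corollary is simply to repackage those two statements into the clean form that will feed into the proof of the degeneration formula, where $\Delta^{!}([\sK_{1,2}])$ is the natural object to combine with the K\"unneth decomposition of $[\sK_{\Xi_1}\times\sK_{\Xi_2}]$ via the diagonal class of $D^{|M|}$.
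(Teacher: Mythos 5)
Your proof is correct and is essentially identical to the paper's, which obtains the corollary simply by combining Corollary~\ref{cor:glue-curve} with Proposition~\ref{prop:vir-glue-tw}. Your observation that the $q'_{*}$ in the displayed statement must be read as $q_{*}$ (pushforward along the \'etale map $q\colon\sK_{\eta}\cong\sK_{1,2}''\to\sK_{1,2}'$ of (\ref{equ:remove-split-log})) is also correct, since $[\sK_{\eta}]$ lives on the target rather than the source of the section $q'$; this notational slip is present in the paper's statement as well.
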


Denote by $[\Delta]$ the class $\Delta_{*}(D^{|M|})$. By \cite[6.3.4]{Fulton}, we have 
\[q'_{*}[\sK_{\eta}] = (\prod_{i\in M} c_{i})\cdot \Bev^{*}_{1,2}[\Delta]\cap [\sK_{1,2}].\]
Since 
\[[\Delta]=\prod_{i\in M}(\sum_{\delta_{i}\in F}\delta_{i}\times \delta_{i}^{\vee}),\] 
we obtain
\[q'_{*}[\sK_{\eta}] = \prod_{i\in M} \big(c_{i}\sum_{\delta_{i}\in F} \Bev^{*}_{\Xi_{1},i}\delta_{i}\times\Bev^{*}_{\Xi_{2},i}\delta_{i}^{\vee}\big)\cap [\sK_{1,2}].\]
By Lemma \ref{lem:GlueTarget} we have

\begin{cor}\label{cor:glue-vir}
\[(u_{\eta}\circ q')_{*}[\sK_{\eta}] = r(\eta)^{-1}\cdot\prod_{i\in M} \big(c_{i}\sum_{\delta_{i}\in F} \Bev^{*}_{\Xi_{1},i}\delta_{i}\times\Bev^{*}_{\Xi_{2},i}\delta_{i}^{\vee}\big)\cap [\sK_{\Xi_{1}}\times\sK_{\Xi_{2}}].\]
\end{cor}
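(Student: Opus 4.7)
The plan is to simply combine the two inputs already established: the class computation
\[q'_{*}[\sK_{\eta}] \;=\; \prod_{i\in M} \Bigl(c_{i}\sum_{\delta_{i}\in F} \Bev^{*}_{\Xi_{1},i}\delta_{i}\times\Bev^{*}_{\Xi_{2},i}\delta_{i}^{\vee}\Bigr)\cap [\sK_{1,2}]\]
obtained just before the corollary, together with Lemma \ref{lem:GlueTarget}, which says that $(u_{\eta})_{*}[\sK_{1,2}] = r(\eta)^{-1}[\sK_{\Xi_{1}}\times\sK_{\Xi_{2}}]$. The only substantive step is to apply $(u_{\eta})_{*}$ to the first identity and commute it past the cap product with the evaluation classes.

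First, I would verify that the evaluation maps $\Bev_{\Xi_{j},i}:\sK_{1,2}\to D$ introduced in Section~\ref{ss:GlueUnderCurve} factor through $u_{\eta}:\sK_{1,2}\to \sK_{\Xi_{1}}\times\sK_{\Xi_{2}}$. This is immediate from the construction: $\Bev_{\Xi_{j},i}$ is defined as $\pi\circ f_{j}\circ \Sigma_{\Xi_{j},i}$ using the universal data on $\sK_{1,2}$, and that universal data is the pullback along $u_{\eta}$ of the corresponding universal data on $\sK_{\Xi_{1}}\times\sK_{\Xi_{2}}$. Hence the relevant evaluation classes on $\sK_{1,2}$ are $u_{\eta}^{*}$ of the analogous evaluation classes on the product, and the full cohomology class
\[\alpha \;:=\; \prod_{i\in M}\Bigl(c_{i}\sum_{\delta_{i}\in F}\Bev^{*}_{\Xi_{1},i}\delta_{i}\times\Bev^{*}_{\Xi_{2},i}\delta_{i}^{\vee}\Bigr)\]
on $\sK_{1,2}$ equals $u_{\eta}^{*}\tilde\alpha$ for the corresponding class $\tilde\alpha$ on $\sK_{\Xi_{1}}\times\sK_{\Xi_{2}}$.

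Then by the projection formula for the proper (indeed \'etale, by Lemma~\ref{lem:split-target} and Lemma~\ref{lem:GlueHalfAcc}) morphism $u_{\eta}$,
\[(u_{\eta})_{*}\bigl(u_{\eta}^{*}\tilde\alpha\cap [\sK_{1,2}]\bigr) \;=\; \tilde\alpha\cap (u_{\eta})_{*}[\sK_{1,2}] \;=\; r(\eta)^{-1}\,\tilde\alpha\cap [\sK_{\Xi_{1}}\times\sK_{\Xi_{2}}],\]
using Lemma~\ref{lem:GlueTarget} in the second equality. Pushing the previous displayed formula for $q'_{*}[\sK_{\eta}]$ forward along $u_{\eta}$ and applying this identity yields
\[(u_{\eta}\circ q')_{*}[\sK_{\eta}] \;=\; r(\eta)^{-1}\cdot\prod_{i\in M}\Bigl(c_{i}\sum_{\delta_{i}\in F}\Bev^{*}_{\Xi_{1},i}\delta_{i}\times\Bev^{*}_{\Xi_{2},i}\delta_{i}^{\vee}\Bigr)\cap [\sK_{\Xi_{1}}\times\sK_{\Xi_{2}}],\]
which is the claim.

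I do not expect any real obstacle here; the content of the corollary is entirely packaging, and the only thing to be careful about is the identification of the evaluation classes on $\sK_{1,2}$ as pullbacks via $u_{\eta}$ so that the projection formula applies cleanly. The earlier diagram~\eqref{diag:GlueTarget} and the construction of the universal maps on $\sK_{1,2}$ make that identification automatic.
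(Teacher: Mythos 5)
Your proposal is correct and is essentially the paper's own argument: the paper obtains $q'_{*}[\sK_{\eta}] = \prod_{i\in M}\bigl(c_i\sum_{\delta_i}\Bev^{*}_{\Xi_1,i}\delta_i\times\Bev^{*}_{\Xi_2,i}\delta_i^{\vee}\bigr)\cap[\sK_{1,2}]$ via $\Delta^{!}$ and the K\"unneth decomposition of $[\Delta]$, and then cites Lemma~\ref{lem:GlueTarget} to pass to $\sK_{\Xi_1}\times\sK_{\Xi_2}$, exactly as you do by applying $(u_\eta)_*$ and the projection formula. Your explicit remark that the evaluation classes on $\sK_{1,2}$ are $u_\eta^{*}$ of those on the product, which is what makes the projection formula applicable, is a step the paper leaves implicit but is correct and follows from the fiber-product construction of $\sK_{1,2}$ in diagram~\eqref{diag:GlueTarget}.
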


\subsection{Proof of Theorem \ref{thm:main}}
Same as the situation in \cite[5.10]{AF}, the stack $\sK_{\eta}$ carries two universal families of contracted curves: 
\begin{enumerate}
 \item the disconnected family $\sC'\to \sK_{\eta}$ with sections $s_{i}'$, and evaluations $\Bev_{i}'$, which are the pull-back of the contracted family of $\sC_{1}\coprod \sC_{2}\to \sK_{\Xi_{1}}\times\sK_{\Xi_{2}}$;
 \item the connected family $C_{\eta}\to \sK_{\eta}$ as in (\ref{diag:contract-family}) with sections $s_{i}$, and evaluations $\Bev_{i}$.
\end{enumerate}
Note that the pull-back of the class $\psi_{i}$ of the sheaf $s_{i}^{*}(\omega_{\C_{\eta}/\sK_{\eta}})$ coincides with the class $(s_{i}')^{*}(\omega_{\sC_{1}\coprod \sC_{2}/ \sK_{\Xi_{1}}\times\sK_{\Xi_{2}}})$, and the same for the pull-back of $\gamma_{i}$ via the evaluation maps. Now we have:

\begin{align*}
\left\langle \prod_{i=1}^{n}{\tau_{m_{i}}(\gamma_{i})}\right\rangle_{g,n,\beta}^{W_{0}} &= \sum_{\eta\in \Omega}\frac{r(\eta)}{|M|!}\mbox{deg}\big((s\circ t_{\eta})^{*}(\prod_{i\in N}\psi_{i}^{m_{i}}\cdot\Bev_{i}^{*}\gamma_{i})\cap[\sK_{\eta}]\big) \\
   & (\mbox{by the projection formula and Proposition \ref{prop:first-decomp-vir}})\\
&= \sum_{\eta\in \Omega}\frac{r(\eta)}{|M|!}\mbox{deg}\big((u_{\eta}\circ h)^{*}(\prod_{i\in N}\psi_{i}^{'m_{i}}\cdot\Bev_{i}^{'*}\gamma_{i})\cap[\sK_{\eta}]\big)\\
   & (\mbox{by the above discussion}) \\
&= \sum_{\eta\in \Omega}\frac{r(\eta)}{|M|!}\frac{\prod_{j\in M}c_{j}}{r(\eta)}\mbox{deg}\Big((\prod_{i\in N}\psi_{i}^{'m_{i}}\cdot\Bev_{i}^{'*}\gamma_{i})\\
 & \ \ \ \ \ \ \ \cdot \prod_{j\in M}\big(\sum_{\delta_{j}\in F}  \Bev^{*}_{\Xi_{1},j}\ \delta_{j}\times\Bev^{*}_{\Xi_{2},j}\ \delta_{j}^{\vee}\big)\cup[\sK_{\Xi_{1}}]\times[\sK_{\Xi_{2}}]\Big)\\
    & (\mbox{by the projection formula and Corollary \ref{cor:glue-vir}}) \\
&= \sum_{\eta\in \Omega}\frac{\prod_{j\in M}c_{j}}{|M|!}\sum_{\delta_{j}\in F}(-1)^{\epsilon}\left\langle\prod_{i\in N_{1}}\tau_{m_{i}}(\gamma_{i})|\prod_{j\in M}\delta_{j}\right\rangle_{\Xi_{1}}^{X_{1},D}\\
& \cdot \left\langle\prod_{i\in N_{2}}\tau_{m_{i}}(\gamma_{i})|\prod_{j\in M}\delta_{j}^{\vee}\right\rangle_{\Xi_{2}}^{X_{2},D}
\end{align*}

This finishes the proof of the theorem. $\square$

\appendix

\section{Prerequisites On Logarithmic Geometry}

\subsection{Basic definitions and properties}
Following \cite{KKato} and \cite{Ogus}, we first recall some basic terminologies on logarithmic geometry.

\subsubsection{Monoids} 
A {\em monoid} is a commutative semi-group with a unit. We usually use $``+"$ and $``0"$ to denote the binary operation and the unit of a monoid respectively. A morphism between two monoids is required to preserve the unit. 

Let $P$ be a monoid. We can associate a group
\[P^{gp}:=\{(a,b) |(a,b)\sim(c,d) \ \mbox{if} \ \exists s \in P \ \mbox{such that} \ s+a+d=s+b+c\}.\]
The monoid $P$ is called {\em integral} if the natural map $P\to P^{gp}$ is injective. It is called {\em saturated} if it is integral, and satisfies that for any $p\in P^{gp}$ if $n\cdot p\in P$ for some positive integer $n$ then $p\in P$. A monoid $P$ is called {\em fine} if it is integral and finitely generated.

A morphism $h: Q\to P$ between integral monoids is called {\em integral} if for any $a_{1}, a_{2}\in Q$, and $b_{1},b_{2}\in P$ which satisfy $h(a_{1})b_{1}=h(a_{2})b_{2}$, there exist $a_{2}, a_{4}\in Q$ and $b\in P$ such that $b_{1}=h(a_{3})b$ and $a_{1}a_{3}=a_{2}a_{4}$. This is equivalent to say that the morphism of monoid algebras $\Z[Q]\to Z[P]$ is flat.

A monoid $P$ is called {\em sharp} if there are no other unit except $0$. A nonzero element $p$ in a sharp monoid $P$ is called {\em irreducible} if $p=a+b$ implies either $a=0$ or $b=0$. Denote by $Irr(P)$ the set of irreducible elements in a sharp monoid $P$. A fine monoid $P$ is called {\em free} if $P\cong \N^{n}$ for some positive integer $n$.

\subsubsection{Logarithmic structures}\label{ss:DefLogStr}
Let $X$ be a scheme. A {\em pre-log} structure on $X$ is a pair $(\sM,\exp)$, which consists of a sheaf of monoids $\sM$ on the \'etale site $X_{\acute{e}t}$ of $X$, and a morphism of sheaves of monoids $\exp: \sM\to \sO_{X}$, called the structure morphism of $\sM$. Here we view $\sO_{X}$ as a monoid under multiplication. 

A pre-log structure $\sM$ on $X$ is called a {\em log structure} if $\exp^{-1}(\sO_{X}^{*})\cong \sO^{*}_{X}$ via $\exp$. We sometimes omit the morphism $\exp$, and use $\sM$ to denote the log structure if no confusion could arise. We call the pair $(X,\sM)$ a {\em log scheme}. 

Given two log structures $\sM$ and $\sN$ on $X$, a {\em morphism of the log structures} $h:\sM\to\sN$ is a morphism of sheaves of monoids which are compatible with the structure morphisms of $\sM$ and $\sN$.

Given a pre-log strucutre $\sM$ on $X$, we obtain a log structure $\sM^{a}$ given by 
\[\sM^{a}:=\sM\oplus_{\exp^{-1}(\sO^{*}_{X})}\sO_{X}^{*}.\]
Such $\sM^{a}$ is called the {\em associated log structure of $\sM$}. Consider a morphism of schemes $f:X\to Y$, and a log structure $\sM_{Y}$ on $Y$. We can define the {\em pull-back log structure} $f^{*}(\sM_{Y})$ to be the log structure associated to the pre-log structure 
\[f^{-1}(\sM_{Y})\to f^{-1}(\sO_{Y})\to \sO_{X}.\]

Consider two log schemes $(X,\sM_{X})$ and $(Y,\sM_{Y})$. A {\em morphism of log schemes} $(X,\sM_{X})\to (Y,\sM_{Y})$ is a pair $(f,f^{\flat})$, where $f:X\to Y$ is a morphism of the underlying schemes, and $f^{\flat}:f^{*}(\sM_{Y})\to \sM_{X}$ is a morphism of log structures on $X$. The morphism $(f,f^{\flat})$ is called {\em strict} if $f^{\flat}$ is an isomorphism of log structures. It is called {\em vertical} if the quitient sheaf of monoids $\sM_{X}/f^{*}(\sM_{Y})$ is a sheaf of groups under the induced monoidal operation.

\subsubsection{Charts of log structures}\label{ss:ChartLog}
Let $(X,\sM)$ be a log scheme, and $P$ a fine monoid. Denote by $P_{X}$ the constant sheaf of monoid $P$ on X. A {\em chart} of $\sM$ is a morphism $P_{X}\to \sM$ such that the associated log structure of the composition $P_{X}\to \sM\to\sO_{X}$ is $\sM$. The log structure $\sM$ is called a {\em fine log structure} on $X$ if a chart exists \'etale locally everywhere on $X$. If the charts are all given by fine and saturated monoids then $\sM$ is called an {\em fs} log structure. In this paper, we only consider fs log structures.  

Let $\CharM=\sM/\sO_{X}^{*}$ be the quotient sheaf. We call it the {\em characteristic} of the log structure $\sM$. It is useful to notice that $\overline{f^{*}(\sM)}=f^{-1}(\CharM)$ for any morphism of schemes $f: Y\to X$. For any closed point $x\in X$, denote by $\bar{x}$ the separable closure of $x$. A fine log structure $\sM$ is called locally free if for any $x\in X$, we have $\CharM_{\bar{x}}\cong \N^{n}$ for some positive integer $r$. 

Consider a morphism of log structures $h:\sM\to\sN$ over $X$. The morphism $h$ is called {\em simple} at $p\in X$, if $\CharM_{\bar{p}}\to\overline{N}_{\bar{p}}$ is injective, and for any $e\in Irr(\CharM_{\bar{p}})$ there exists an element $e'\in Irr(\overline{N}_{\bar{p}})$, such that $\bar{h}(e)=l\cdot e'$ for some positive integer $l$. Here $\bar{h}$ is the map of monoids induced by $h$.

Let $\CharM_{\bar{x}}^{gp,tor}$ be the torsion part of $\CharM_{\bar{x}}^{gp}$. The following result is very useful for creating charts.

\begin{prop}\cite[2.1]{LogStack}
Using the notation as above, there exist an fppf neighborhood $f:X'\to X$ of $x$, and a chart $\beta:P\to f^{*}(\sM)$ such that for some geometric point $\bar{x}'\to X'$ lying over $x$, the natural map $P\to f^{-1}\CharM_{\bar{x}'}$ is bijective. If $\CharM_{\bar{x}}^{gp,tor}\otimes k(x)=0$, then such a chart exists in an \'etale neighborhood of $x$.
\end{prop}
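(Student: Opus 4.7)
Setting $P := \CharM_{\bar{x}}$, which is a fine saturated monoid by hypothesis, the plan is to construct the chart by producing a monoid homomorphism $\beta \colon P \to \sM$ (at the sheaf level, on a small neighborhood) which lifts the identity under the projection $\sM \to \CharM$. At the stalk $\bar{x}$, the projection $\sM_{\bar{x}} \to \CharM_{\bar{x}} = P$ is a torsor under the sheaf of units $\sO^{*}_{X,\bar{x}}$, so the existence of a section $P \to \sM_{\bar{x}}$ reduces, at the group level, to splitting the extension $1 \to \sO^{*}_{X,\bar{x}} \to \sM^{gp}_{\bar{x}} \to P^{gp} \to 1$; the obstruction lives in $\mathrm{Ext}^{1}(P^{gp}, \sO^{*}_{X,\bar{x}})$.

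\textbf{Key steps.} First, decompose $P^{gp} \cong \Z^{r} \oplus T$ with $T = \CharM^{gp,tor}_{\bar{x}}$ finite. Choose arbitrary lifts in $\sM^{gp}_{\bar{x}}$ of a basis of the free part; the essential content is in lifting the torsion part $T$. For each torsion generator $\bar{t}$ of order $n$ with fixed preimage $m \in \sM^{gp}_{\bar{x}}$, a section must provide an element whose $n$-th power equals a specified unit $u := m^{n} \in \sO^{*}_{X,\bar{x}}$. Such an $n$-th root of $u$ can always be adjoined fppf-locally, producing a splitting $s \colon P^{gp} \to \sM^{gp}_{\bar{x}}$ after passage to an fppf cover $X' \to X$. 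The restriction $s|_{P}$ automatically lands in $\sM_{\bar{x}}$: for any $p \in P$ with preimage $m_{p} \in \sM_{\bar{x}}$, the element $s(p)$ differs from $m_{p}$ by a unit in $\sO^{*}_{X,\bar{x}} \subset \sM_{\bar{x}}$. Next, the stalk map $P \to \sM_{\bar{x}}$ is determined by the lifts of finitely many generators of $P$; these lifts extend to sections of $\sM$ on a smaller fppf neighborhood of $x$, and the finitely many monoid relations among the generators hold on a still smaller neighborhood by the sheaf property. Finally, pick a geometric point $\bar{x}' \to X'$ over $x$; since $P = \CharM_{\bar{x}}$ and $(f^{-1}\CharM)_{\bar{x}'} = \CharM_{\bar{x}}$, the composition $P \to (f^{-1}\CharM)_{\bar{x}'}$ is the identity by construction, so it is bijective and $\beta$ is a chart at $\bar{x}'$; constructibility of $\CharM$ (which is the usual stratification by monoid type for fine log structures) then yields that $\beta$ is a chart on some open containing $\bar{x}'$.

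\textbf{Main obstacle and the \'etale refinement.} The crux is the root-extraction step, where one wants to solve a finite system $y_{i}^{n_{i}} = u_{i}$ with $u_{i} \in \sO^{*}_{X,\bar{x}}$. Under the extra hypothesis $\CharM^{gp,tor}_{\bar{x}} \otimes k(x) = 0$, each $n_{i}$ is invertible in the residue field $k(x)$, so the polynomial $y^{n_{i}} - u_{i}$ has nonvanishing derivative and defines an \'etale neighborhood of $x$ by Hensel's lemma; this upgrades the fppf section to an \'etale one and produces the second clause of the proposition. In general char $k(x)$ divides some $n_{i}$ and an fppf cover is genuinely necessary, e.g., the Artin--Schreier-type equations one is forced to consider. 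I expect this root-extraction and the careful bookkeeping of fppf vs.\ \'etale to be the only delicate ingredients; the extension of the stalk section to a neighborhood, the constructibility of $\CharM$, and the verification that the induced map of associated log structures is an isomorphism in a neighborhood are routine sheaf-theoretic manipulations.
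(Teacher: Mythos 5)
This proposition is cited from Olsson \cite{LogStack} and the paper supplies no proof of its own, so the only meaningful comparison is with Olsson's original argument. Your proposal tracks that argument closely and correctly: take $P = \CharM_{\bar{x}}$, observe that $\sM_{\bar{x}} \to P$ is an $\sO^{*}_{X,\bar{x}}$-torsor, reduce to splitting the extension $1 \to \sO^{*}_{X,\bar{x}} \to \sM^{gp}_{\bar{x}} \to P^{gp} \to 1$, decompose $P^{gp}$ into free plus torsion, lift the free part without obstruction, and handle the torsion part by extracting roots of units. The observation that $m^{n}$ is a unit (since $n\bar{t}=0$ in $P^{gp}$) and that modifying $m$ by an $n$-th root of $m^{n}$ kills the cocycle is the right mechanism, and the distinction fppf-versus-\'etale according to whether the torsion orders are invertible in $k(x)$ is exactly where the hypothesis $\CharM^{gp,tor}_{\bar{x}}\otimes k(x)=0$ enters via Hensel's lemma. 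Spreading the stalkwise lift to a small neighborhood by finitely many generators and relations is also standard and correct.

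The one place you are a bit loose is the very last step. After spreading out, you still have to certify that $\beta$ is a chart on some open set containing $\bar{x}'$, i.e.\ that the associated log structure of $P_{X'} \to \sM \to \sO_{X'}$ equals $\sM$. Constructibility of $\CharM$ only gives you a locally closed stratum where the characteristic stalk is locally constant; it does not by itself promote a stalkwise bijection $P \to \CharM_{\bar{x}'}$ to chart-ness on an open set. What you want is the standard criterion (Kato's lemma, or the analogue in Olsson): a morphism $P_{X'}\to\sM$ from a fine monoid which induces a surjection (here, a bijection) $P \to \CharM_{\bar{x}'}$ at a geometric point is automatically a chart in an \'etale neighborhood of $\bar{x}'$. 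You should invoke that lemma explicitly rather than attribute the step to constructibility alone. Aside from this, your proof proposal is correct and is essentially Olsson's.
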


\begin{rem}
In this paper, we only work with fs log structures over field of characteristic $0$. The above proposition implies that in such situation, there exists a section of $\sM_{\bar{x}}\to\CharM_{\bar{x}}$, which gives a chart \'etale locally near $x$.
\end{rem}

Consider a morphism $f:(X,\sM_{X})\to(Y,\sM_{Y})$ of fine log schemes. A {\em chart} of $f$ is a triple $(P_{X}\to\sM_{X},Q_{Y}\to\sM_{Y},Q\to P)$, where $P_{X}\to \sM_{X}$ and $Q_{Y}\to \sM_{Y}$ are charts of $\sM_{X}$ and $\sM_{Y}$ respectively, and $Q\to P$ is a morphism of monoids such that the following diagram commutes:
\[\xymatrix{
Q_{X} \ar[r] \ar[d] & P_{X} \ar[d] \\
f^{*}(\sM_{Y}) \ar[r] & \sM_{X}.
}
\]
Charts of morphisms of fine log schemes exist \'etale locally by the following result:
\begin{prop}\cite[2.2]{LogStack}
Notations as above, suppose that $Q_{Y}\to\sM_{Y}$ is a chart. Then \'etale locally on $X$, there exist a chart $P_{X}\to\sM_{X}$ and an injective morphism of monoids $Q\to P$, such that the triple $(P_{X}\to\sM_{X},Q_{Y}\to\sM_{Y},Q\to P)$ gives a chart for $f$ \'etale locally on $X$. If $f$ is a morphism of fs log schemes and if $Q$ is saturated and torsion free, then we can choose $P$ to be also saturated and torsion free in the chart of $f$.
\end{prop}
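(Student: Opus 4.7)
The plan is to augment the chart of $\sM_X$ furnished by the preceding proposition with the monoid $Q$ via a direct-sum construction, producing a compatible chart of the morphism $f$.

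First, use the given chart $\alpha: Q_Y \to \sM_Y$ to form the composition $Q_X \to f^{-1}\sM_Y \to f^*\sM_Y \xrightarrow{f^\flat} \sM_X$, which I will call $\bar\alpha: Q_X \to \sM_X$. Fix a geometric point $\bar x \to X$ above $x$. Applying the previous proposition to $(X,\sM_X)$, we obtain in an \'etale neighborhood of $\bar x$ (fppf in general, but \'etale suffices in the fine-saturated setting over characteristic zero, by the remark) a chart $\beta: P'_X \to \sM_X$ for which $P' \to \CharM_{X,\bar x}$ is bijective.

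Next, set $P := Q \oplus P'$, take $\iota: Q \hookrightarrow P$ to be the inclusion into the first factor (manifestly injective), and define $P_X \to \sM_X$ by $(q,p)\mapsto \bar\alpha(q)\cdot\beta(p)$. The triple $(P_X \to \sM_X,\ Q_Y \to \sM_Y,\ \iota : Q \to P)$ yields a commutative square of the form required for a chart of $f$. It remains to check that $P_X \to \sM_X$ is really a chart, i.e.\ that the log structure it induces via the pushout against $\sO_X^*$ agrees with $\sM_X$ in a neighborhood of $\bar x$. This reduces to two stalk-level conditions: the induced map $P \to \CharM_{X,\bar x}$ should be surjective, and the preimage of units should consist of units. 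Both conditions hold because the map factors through $P' \to \CharM_{X,\bar x}$ which was already bijective, and because $\beta$ was already a chart; any $q \in Q$ whose image lies in $\sO_X^*$ is absorbed correctly in the associated-log-structure construction and does not disturb these properties.

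For the saturation statement: when $Q$ is saturated and torsion-free, we may take $P'$ saturated and torsion-free by the previous proposition applied in the fs setting, and then $P = Q \oplus P'$ is automatically saturated and torsion-free, since both properties are stable under finite direct sums of integral monoids. The main subtlety I expect is the stalkwise verification that $P_X \to \sM_X$ is a chart, specifically the bookkeeping when elements of $Q$ map into $\sO_X^*$ and the consequent care needed to ensure that the direct-sum construction still gives $\sM_X$ rather than a strict enlargement of it.
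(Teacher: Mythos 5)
There is a genuine gap: the monoid $P := Q \oplus P'$ you construct is, in general, \emph{not} a chart for $\sM_X$. To be a chart near $\bar x$, the prelog structure $\pi : P \to \sO_{X}$ must induce $\sM_X$, which (since both sides are integral log structures) is equivalent to requiring that $P/\pi^{-1}(\sO_{X,\bar x}^{*}) \to \CharM_{X,\bar x}$ be an isomorphism, not merely that $P \to \CharM_{X,\bar x}$ be surjective. Your criterion (surjectivity plus a condition on ``preimages of units'') is too weak, and your assertion that only elements of $Q$ mapping into $\sO_X^*$ matter is exactly backwards: the problem is caused by elements of $Q$ that map to \emph{non}-units. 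Such an element $q$ gives, via $P' \cong \CharM_{X,\bar x}$, a $p \in P'$ with $\bar\alpha(q)$ and $\beta(p)$ differing by a unit; but $(q,0)$ and $(0,p)$ are distinct in $Q \oplus P'$ and cannot be identified in the associated log structure since $\pi^{-1}(\sO^*)$ contains neither. Concretely, take $Y = X = \Spec k[t]$, $f = \mathrm{id}$, $\sM_X = \sM_Y$ the divisorial log structure at the origin, $Q = P' = \N$ each sending the generator to $t$. Then $P = \N^2$ maps $(a,b) \mapsto t^{a+b}$, so $\pi^{-1}(\sO^*_{\bar 0}) = \{0\}$ and $\overline{P^a}_{\bar 0} = \N^2 \neq \N = \CharM_{\bar 0}$; the associated log structure is strictly larger than $\sM_X$.

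The correct construction (following Olsson) uses the groupification $Q^{gp}$ rather than $Q$ itself: since $P' \to \CharM_{X,\bar x}$ is bijective, the composite $Q \to \CharM_{X,\bar x}$ lifts to $\lambda : Q \to P'$, and the discrepancy $u(q) := \bar\alpha(q) \cdot \beta(\lambda(q))^{-1}$ defines a homomorphism $u : Q \to \sO^*_{X,\bar x}$, which extends uniquely to $u^{gp} : Q^{gp} \to \sO^*_{X,\bar x}$ and spreads out over an \'etale neighborhood because $Q^{gp}$ is finitely generated. Take $P := P' \oplus Q^{gp}$ with structure map $(p,g) \mapsto \beta(p)\cdot u^{gp}(g)$ and injection $Q \hookrightarrow P$, $q \mapsto (\lambda(q), q)$. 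Now $\pi^{-1}(\sO^*) = \{0\} \oplus Q^{gp}$, so $P/\pi^{-1}(\sO^*) \cong P' \cong \CharM_{X,\bar x}$, and $P$ \emph{is} a chart. The saturation/torsion-freeness discussion carries over verbatim (a group is automatically saturated, and $Q^{gp}$ is torsion-free when $Q$ is). Your direct-sum instinct is on the right track, but the summand has to be $Q^{gp}$ and the structure map has to be twisted by $u^{gp}$; the na\"ive $Q \oplus P'$ produces a strictly larger log structure.
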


\begin{rem}
Consider a morphism of log schemes $f:(X,\sM_{X})\to(Y,\sM_{Y})$. With the help of charts, we can describe the log smoothness properties of $f$ that we will use later. The log map $f$ is called log smooth if \'etale locally, there is a chart $(P_{X}\to\sM_{X},Q_{Y}\to\sM_{Y},Q\to P)$ of $f$ such that:
\begin{enumerate}
 \item $\Ker{Q^{gp}\to P^{gp}}$ and the torsion part of $\Coker(Q^{gp}\to P^{gp})$ are finite groups;
 \item the induced map $X\to Y\times_{Spec(\Z[Q])}Spec\Z[p]$ is smooth in the usual sense.
\end{enumerate}

The map $f$ is called {\em integral} if for every $p\in X$, the induced map $\CharM_{f(\bar{p})}\to\CharM_{\bar{p}}$ is integral. In general, the underlying structure map of a log smooth morphism need not be flat. However, it was shown in \cite[4.5]{KKato} that the underlying map of a log smooth and integral morphism is flat.
\end{rem} 

\subsection{Olsson's Log Stacks}\label{s:LogStack}
We follow \cite{LogStack} to introduce the algebraic stack parameterizing log structures. Consider a base scheme $S$, and an algebraic stack $\sX$ over $S$ in the sense of \cite{Artin}. This means that the diagonal $\sX\to\sX\times_{S}\sX$ is representable and of finite type, and there exists a surjective smooth morphism $X\to \sX$ from a scheme. Now we can define a fine log structure $\sM_{\sX}$ on $\sX$ by repeating the definitions in \ref{ss:DefLogStr} and \ref{ss:ChartLog} but using the lisse-\'etale site instead of the \'etale site. See \cite[Section 5]{LogStack} for details.

For any $S$-scheme $T$, and an arrow $g:T\to \sX$, we obtain a fine log structure $g^{*}(\sM_{\sX})$ on the lisse-\'etale site $T_{lis\mbox{-}et}$ of $T$. It is shown in \cite[5.3]{LogStack} that such $g^{*}(\sM_{\sX})$ is isomorphic to a unique fine log structure on the \'etale site $T_{\acute{e}t}$ of $T$. Thus, we can still use $g^{*}(\sM_{\sX})$ to denote this new log structure on $T$. By pull-back the log structure $\sM_{\sX}$, we define a functor from schemes over $\sX$ to the category of fine log schemes over $S$. The stack $\sX$ associated with this functor is called a {\em log stacks} in \cite{FKato}. A fine log scheme $(X,\sM_{X})$ can be naturally viewed as a log algebraic stack.

Consider the log algebraic stack $(\sX,\sM_{X})$. We define a fibered category $\sL og_{(\sX,\sM_{\sX})}$ over $\sX$. Its objects are pairs $(g:X\to\sX,g^{*}(\sM_{\sX})\to \sM_{X})$, where $g$ is a map from scheme $X$ to $\sX$, and $g^{*}(\sM_{\sX})\to \sM_{X}$ is a morphism of fine log structures on $X$. An arrow $(g:X\to\sX,g^{*}(\sM_{\sX})\to \sM_{X})\to(h:Y\to\sX,h^{*}(\sM_{\sX})\to \sM_{Y})$ is a strict morphism of log schemes $(X,\sM_{X})\to (Y,\sM_{Y})$, such that the underlying map $X\to Y$ is a morphism over $\sX$, and we have the following cartesian diagram:
\[
\xymatrix{
(X,\sM_{X}) \ar[r] \ar[d] & (Y,\sM_{Y}) \ar[d] \\
(X,g^{*}(\sM_{\sX})) \ar[r] & (Y,h^{*}(\sM_{\sX})).
}
\]

\begin{thm}\cite[5.9]{LogStack}
The fibered category $\sL og_{(\sX,\sM_{\sX})}$ is an algebraic stack locally of finite presentation over $\sX$.
\end{thm}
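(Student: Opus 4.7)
The strategy is to follow Olsson's approach: construct an explicit smooth presentation of $\sL og_{(\sX,\sM_{\sX})}$ by algebraic spaces parametrizing log structures equipped with chosen charts, and separately verify diagonal representability. Since the formation of $\sL og$ commutes with smooth base change (a direct check from the definition), one may reduce to the case where $\sX = \Spec A$ is affine and $\sM_{\sX}$ admits a global fine chart $\beta: P \to \sM_{\sX}$.

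In this reduced setting, for each homomorphism of fine monoids $\theta: P \to Q$, I would introduce the functor $V_{\theta}$ on $\sX$-schemes sending $f: Y \to \sX$ to the set of monoid homomorphisms $\gamma: Q \to \Gamma(Y,\sO_{Y})$ (with $\sO_{Y}$ regarded as a multiplicative monoid) such that $\gamma \circ \theta = f^{*}\beta$. This $V_{\theta}$ is representable by the closed subscheme $\sX \times_{\Spec \Z[P]} \Spec \Z[Q]$, which is affine and of finite presentation over $\sX$. Sending $\gamma$ to the log structure associated to the pre-log structure $Q \to \sO_{Y}$, equipped with the canonical morphism from $f^{*}\sM_{\sX}$ induced by $\theta$, defines a natural morphism $\pi_{\theta}: V_{\theta} \to \sL og_{(\sX,\sM_{\sX})}$.

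The crux of the argument, and the main obstacle, is to show that $\coprod_{\theta} \pi_{\theta}$ is a smooth surjection. Surjectivity follows from the local existence of charts for morphisms of fine log structures recalled in \cite[2.2]{LogStack}. Smoothness is established through the infinitesimal lifting criterion: given a square-zero thickening $Y_{0} \hookrightarrow Y$ over $\sX$, a log structure $\sM_{Y}$ with compatible morphism $f^{*}\sM_{\sX} \to \sM_{Y}$, and a chart $\gamma_{0}: Q \to \sM_{Y_{0}}$ compatible with $\beta$, one must lift $\gamma_{0}$ generator-by-generator to $\gamma: Q \to \sM_{Y}$. Lifts of individual generators exist because $\sM_{Y} \to \sM_{Y_{0}}$ is surjective on stalks (thickenings preserve the underlying topological space and $\sO_{Y}^{*} \to \sO_{Y_{0}}^{*}$ is surjective), and the monoid relations can be arranged to hold after adjusting the lifts by units in $1 + \mathcal{I}$, where $\mathcal{I}$ is the ideal of the thickening. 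The resulting $\gamma$ remains a chart by a Nakayama argument applied to the associated log structures. The subtle point is controlling torsion in the characteristic monoid, which is precisely why allowing \emph{all} homomorphisms $\theta$ rather than a single well-chosen one is essential.

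Finally, diagonal representability of $\sL og \to \sL og \times_{\sX} \sL og$ is verified separately: given two objects over $Y$, after \'etale localization and passing to a common chart $Q$, the functor of isomorphisms is identified with a closed subfunctor of $\mathrm{Hom}(Q^{gp}, \G_{m,Y})$, which is representable by a separated algebraic space of finite presentation. Combining the diagonal representability with the smooth cover $\coprod_{\theta} V_{\theta}$ establishes that $\sL og_{(\sX,\sM_{\sX})}$ is an algebraic stack. Since each $V_{\theta}$ is of finite presentation over $\sX$, we obtain that $\sL og_{(\sX,\sM_{\sX})}$ is locally of finite presentation over $\sX$, as desired.
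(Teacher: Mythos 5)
The paper does not prove this statement: it is recalled verbatim from Olsson \cite[5.9]{LogStack} in the appendix as background, so there is no in-paper proof against which to compare. Your sketch follows the natural strategy of exhibiting an explicit smooth presentation by chart data; this is a known route, but Olsson's own proof of the scheme-level Theorem~1.1 (of which 5.9 is the stack extension) actually proceeds via Artin's representability criterion --- limit preservation, Schlessinger's conditions, a deformation/obstruction theory, effectivity of formal deformations --- and the explicit presentation of $\sL og$ by stacks of the form $[\Spec\Z[Q]/\Spec\Z[Q^{\mathrm{gp}}]]$ is derived afterward from algebraicity together with the chart lemmas, rather than used to establish it.

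The genuine weak point in your argument, which you yourself flag as the crux, is the smoothness of $\coprod_\theta V_\theta \to \sL og$. Lifting a chart $\gamma_0 : Q \to \sM_{Y_0}$ across a square-zero thickening $Y_0 \hookrightarrow Y$ to a chart inducing a prescribed $\sM_Y$, compatibly with the fixed $P\to\sO_Y$, amounts (after groupifying) to splitting an extension of $Q^{\mathrm{gp}}$ by $1+J$, where $J$ is the ideal of the thickening, and then hitting a specified lift over $P^{\mathrm{gp}}$. The obstruction to the splitting lies in $\mathrm{Ext}^1_{\Z}(Q^{\mathrm{gp}},J)$, which is generally nonzero when $Q^{\mathrm{gp}}$ has torsion, e.g.\ in positive or mixed characteristic. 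Your remark that ``allowing all homomorphisms $\theta$'' is what controls torsion is mistaken: enlarging the index set only improves surjectivity, while smoothness of the disjoint union requires each $V_\theta$ individually to be smooth over $\sL og$, and a $\theta$ with torsion in $Q^{\mathrm{gp}}$ can spoil the cover. In the present paper, working over $\C$, the ideal $J$ is a $\Q$-vector space so $\mathrm{Ext}^1_{\Z}(Q^{\mathrm{gp}},J)=0$ and the lifting goes through --- but the characteristic-zero hypothesis is the reason, not the size of the index set --- and even so you must also account for the compatibility over $P$, which asks that $\mathrm{Hom}(Q^{\mathrm{gp}},J)\to\mathrm{Hom}(P^{\mathrm{gp}},J)$ be surjective and hence imposes a condition on $\Coker(\theta^{\mathrm{gp}})$. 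Over an arbitrary base, which is the actual generality of Olsson's theorem, these obstructions are precisely why the argument cannot be carried out as casually as your sketch suggests.
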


\subsection{Root of log structures from smooth divisors}\label{ss:RootLog}
Here, we collect some results of locally free log structures and their roots as we will used in our construction. We refer to \cite{BV} for more details of the root construction.

\subsubsection{Log structures associated to normal crossing divisors.}\label{sss:DivLog}
This is an important example given in \cite[1.5]{KKato}. Let $X$ be a smooth scheme, and $D$ is reduced divisor in $X$ with normal crossings. We define a fine log structure on $X$:
\[\sM^{D}:=\{g\in \sO_{X} | g \mbox{ is invertible outside }D\}\subset \sO_{X}.\]
For each point $p\in D$, let $\{g_{i}\}_{i=1}^{n}$ be the set of local coordinates near $p$, such that $D$ is given by the vanishing of $g_{1}\cdots g_{n}$. Then the log structure $\sM^{D}_{p}$ is generated by $\{\log g_{i}\}_{i=1}^{n}$, where $\log g_{i}$ is the pre-image of $g_{i}$ in the log structure. Thus \'etale locally near $p$ we have a chart 
\[\N^{n}\to \sM^{D} \ \ \ e_{i}\mapsto g_{i},\]
where $e_{i}$ is the standard generators of $\N^{n}$. The above chart gives an isomorphism $\N^{n}\cong \CharM_{p}^{D}$. Thus, the log structure $\sM^{D}$ is locally free, and its rank at a point $p$ equals the number of components of $D$ at $p$.

Consider the case when $D$ is a finite union of reduced divisors in $X$ with normal crossings. Denote by $D=\coprod_{j}{D_{j}}$. The same definition as above gives a log structure $\sM^{D}$ on $X$. Similarly, for each $D_{j}$ we associate a log structure $\sM^{D_{j}}$ on $X$ as above. We have the following decomposition:
\[\sM^{D}\cong \sum_{j}{\sM^{D_{j}}},\]
where the sum is taking over $\sO^{*}_{X}$.

We can assume that $X$ is an algebraic stack, and $D$ is a finite union of reduced divisors in $X$ with normal crossings. Then we can still define the locally free log structure $\sM^{D}$ on $X$ using smooth topology. The decomposition of $\sM^{D}$ into amalgamated sum of $\sM^{D_{j}}$ still holds in this case.

\subsubsection{Root stacks of divisors}
For later use, we introduce the theory of root stacks developed in \cite{Cadman} and \cite[Appendix B]{AGV}.

Let $X$ be an algebraic stack, and $D$ an effective divisor on $X$. Such data corresponds to the line bundle $\sO_{X}(D)$ with its canonical section $\textbf{1}_{D}$, therefore induces a map $X\to \sA=[\A^{1}/\G_{m}]$. Consider the degree $r$ map $\nu_{r}: \sA\to\sA$, given by $t\mapsto t^{r}$ where $t$ is the coordinates of $\A^{1}$. We form a stack $X_{D,r}=X\times_{D,\sA,\nu_{r}}\sA$. Given a $X$-scheme $S$, the objects in $X_{D,r}(S)$ are tuples $(f,M,\phi,s)$ where
\begin{enumerate}
 \item $f:S\to X$ is a morphism;
 \item $M$ is a line bundle on $S$;
 \item $\phi:M^{r}\to f^{*}\sO_{X}(D)$ is an isomorphism;
 \item $s\in H^{0}(M)$ such that $\phi(s^{r})=f^{*}(\textbf{1}_{D})$.
\end{enumerate}
We call $X_{D,r}$ the $r$-th root stack of $D$. It is of Deligne-Mumford type over $X$. 

Consider the case where $X$ is smooth, and the effective divisor $D$ is smooth in $X$.  We would like to consider the local structure of $X_{D,r}$. Thus, we can assume that $X=\Spec k[x_{1},\cdots,x_{l}]$, and $D$ is given by the vanishing of $x_{1}$. Then in this case, the stack $X_{D,r}$ is given by the following stack quotient:
\[[\Spec(k[y,x_{2},\cdots,x_{l}]/(y^{r}=x_{1}))/\mu_{r}]\]
where $\mu_{r}$ is a finite cyclic group of order $r$ invertible in $k$, and for any $u\in \mu_{r}$, the action is given by $u:y\mapsto u\cdot y$, and fix all other coordinates. 

The above local description shows that when both $X$ and $D$ are smooth, the stack $X_{D,r}$ is also smooth. Denote by $\sD\subset X_{D,r}$ the smooth divisor given by the vanishing of the local coordinate $y$. The inverse image of $D$ in $X_{D,r}$ is $r\cdot\sD$. In fact, the morphism $\sD\to D$ makes $\sD$ an $\mu_{r}$-gerbe over $D$. We call the $\sD$ is the $r$-th root of $D$ in $X_{D,r}$.

Consider $\DD=(D_{1},\cdots,D_{n})$ an $n$-tuple of effective divisors $D_{i}\subset X$, and consider $\Br=(r_{1},\cdots,r_{n})$ an $n$-tuple of positive integers. We use the following notation:
\[X_{\DD,\Br}=X_{D_{1},r_{1}}\times_{X}\cdots\times_{X}X_{D_{n},r_{n}}.\]

\subsubsection{Root of log structures}\label{sss:LogRoot}
Let $X$ be a smooth algebraic stack, and $D=\coprod_{j}{D_{j}}$ be a normal crossings divisor given by union of smooth divisors $D_{j}$ on $X$. We assume that $D_{j}\neq D_{i}$ if $j\neq i$. Let $\Br=(r_{1},\cdots,r_{n})$ be an $n$-tuple of positive integers. By the argument in \ref{sss:DivLog}, we have locally free log structures $\sM^{D}$ and $\sM^{D_{j}}$ on $X$ for all $j$ such that 
\[\sM^{D}\cong \sM^{D_{1}}\oplus_{\sO_{X}^{*}}\cdots\oplus_{\sO_{X}^{*}}\sM^{D_{n}}.\]
Let $\N\to \sM^{D_{j}}$ be a local chart for $\sM^{D_{j}}$, then the above decomposition gives a local chart $\N^{n}\to\sM^{D}$. 

Now we define a fibered category $\sX$ over $X$. Given a morphism $f:Y\to X$ from a scheme $Y$, the fiber $\sX(Y)$ is the groupoid of simple morphisms of log structures $f^{*}(\sM^{D})\to\sM$, such that for each geometric point $\bar{p}\in Y$, we locally have a chart near $\bar{p}$
\[
\xymatrix{
\N^{n} \ar[r]^{\oplus_{j}\times r_{j}} \ar[d] & \N^{n} \ar[d]\\
f^{*}(\sM^{D}) \ar[r]& \sM,
}
\]
where each integer $r_{j}$ corresponds to the factor of $\N$ in $\N^{n}$ given by the divisor $D_{j}$. A morphism between two objects
\[(f^{*}(\sM^{D})\to\sM_{1})\to (f^{*}(\sM^{D})\to\sM_{2})\]
is an isomorphism of log structures $\sM_{1}\to\sM_{2}$ fitting in the following commutative diagram:
\[
\xymatrix{
  &f^{*}(\sM_{X}) \ar[rd] \ar[dl] & \\
\sM_{1} \ar[rr] && \sM_{2}.
}
\] 

Denote by $\DD=(D_{1},\cdots,D_{n})$. It was shown in \cite[3.6]{Cadman} and \cite[Section 4]{MO} that $\sX$ is the stack $X_{\DD,\Br}$ defined above.



\end{document}